\documentclass[preprint]{imsart}

\RequirePackage[OT1]{fontenc}
\RequirePackage{amsthm,amsmath,latexsym}
\RequirePackage[numbers]{natbib}
\RequirePackage[colorlinks,citecolor=blue,urlcolor=blue]{hyperref}

\usepackage{amsfonts}



\startlocaldefs
\numberwithin{equation}{section}
\theoremstyle{plain}

\newtheorem{definition}{Definition}[section]
\newtheorem{proposition}[definition]{Proposition}
\newtheorem{theorem}[definition]{Theorem}
\newtheorem{corollary}[definition]{Corollary}
\newtheorem{remark}[definition]{Remark}
\newtheorem{lemma}[definition]{Lemma}

\endlocaldefs

\newcommand{\inner}[2]{\langle #1 | #2 \rangle} 

\def\argmin{\mathop{\rm argmin}}

\newcommand{\cX}{{\cal X}}

\def\cY{{\cal Y}}

\makeatletter
\newcommand{\lleq}{\mathrel{\mathpalette\gl@align<}}
\newcommand{\ggeq}{\mathrel{\mathpalette\gl@align>}}
\newcommand{\gl@align}[2]{
\vbox{\baselineskip\z@skip\lineskip\z@
\ialign{$\m@th#1\hfil##\hfil$\crcr#2\crcr{}_{{}_{(=)}}\crcr}}}
\makeatother


\def\Label#1{\label{#1}\ [\ \text{#1}\ ]\ }
\def\Label{\label}

\newenvironment{proofof}[1]{\vspace*{5mm} \par \noindent
         \quad{\it Proof of #1:\hspace{2mm}}}{\endproof
\hfill$\Box$ \vspace*{3mm}
}

\begin{document}

\begin{frontmatter}
\title{Finite-length Analysis on 
Tail probability for Markov Chain and Application to Simple Hypothesis Testing}
\runtitle{Finite-length Analysis for Markov Chains}

\begin{aug}
\author{{Shun Watanabe$^{1}$ 
and Masahito Hayashi$^{2}$}\\
\rm $^1$Department of  Information Science and Intelligent Systems, 
University of Tokushima, Japan, \\
and Institute for System Research, University of Maryland, College Park. \\
$^2$Graduate School of Mathematics, Nagoya University, Japan, \\
and Centre for Quantum Technologies, National University of Singapore, Singapore. \\
}
\runauthor{S. Watanabe and M. Hayashi}
\end{aug}

\begin{abstract}
Using terminologies of information geometry,
we derive upper and lower bounds of the tail probability of the sample mean.
Employing these bounds, we obtain upper and lower bounds of the minimum error probability of the 2nd kind of error under the exponential constraint for the error probability of the 1st kind of error
in a simple hypothesis testing for a finite-length Markov chain,
which yields the Hoeffding type bound.
For these derivations, we derive upper and lower bounds of
cumulant generating function for Markov chain.
As a byproduct, we obtain another simple proof of central limit theorem
for Markov chain.
\end{abstract}

\begin{keyword}[class=MSC]
\kwd[Primary ]{62M02}
\kwd{62F03}
\end{keyword}

\begin{keyword}
\kwd{simple hypothesis testing}
\kwd{tail probability}
\kwd{finite-length Markov chain}
\kwd{information geometry}
\kwd{relative entropy}
\kwd{relative R\'{e}nyi entropy} 
\end{keyword}

\end{frontmatter}

\section{Introduction}\Label{s1}
Markov chain is a natural model for probability distribution with stochastic correlation.
Under this model, we often focus on the sample mean of $n$ observations,
and discuss the cumulant generating function and the tail probability.
Many existing studies investigated their asymptotic 
behaviors \cite{DZ,CLT2,CLT3,CLT4,MD}.
For example, 
the papers \cite{CLT2,CLT3,CLT4} showed 
the central limit theorem, i.e.,
they proved that the difference between the sample mean and the expectation asymptotically obeys the Gaussian distribution. 
Dembo and Zeitouni \cite{DZ} 
derived the asymptotic cumulant generating function 
and the large deviation bound by using its Legendre transform.
Further, other existing studies \cite{N,NK} 
investigated the simple hypothesis testing for Markov chains.
They derived the Hoeffding bound \cite{Hoeffding} for two Markov chains, 
i.e., 
the exponentially decreasing rate of the second error probability
under the exponential constraint for the first error probability.
In the independently and identically distributed (i.i.d.) case,
as the generalization of Stein's lemma.
Strassen \cite{Strassen} derived the asymptotic expansion of the exponential decreasing rate of the second error probability up to the order $\sqrt{n}$, 
under the constant constraint for the first error probability,
whose quantum extension was recently done by the papers in \cite{Tomamichel,Li}.

Indeed, it is not difficult to give a bound 
when it is not so tight or its computation is not so easy.
Here, we should mention a proper requirement for a better finite-length bound as follows.
\begin{itemize}
\item[(1)] Asymptotic tightness.
For example, in the case of the tail probability,
the bound can recover one of the following in the limit $n \to \infty$;
\begin{itemize}
\item[(T1)] Central limit theorem \cite{CLT2,CLT3,CLT4}
\item[(T2)] Moderate deviation \cite{MD,CLT2}
\item[(T3)] Large deviation \cite{DZ,CLT2}
\end{itemize}
\item[(2)] Computability. The bound should have 
less computational complexity, e.g., $O(1)$, $O(n)$ or $O(n \log n)$.
For example, we call the bound {\it $O(1)$-computable} 
when its computation complexity is $O(1)$.
\end{itemize}
In the i.i.d. case, it is known that 
the Markov inequality derives an upper bound of the tail probability 
that attains the asymptotic tightness in the sense of (T2) and (T3)
and is called Chernoff bound \cite{DZ,Massart}.
However, even in the i.i.d. case, there is no $O(1)$-computable finite-length lower bound 
that attains the asymptotic tightness in the sense of (T2) nor (T3).
The Berry-Essen theorem gives 
upper and lower $O(1)$-computable bounds of the tail probability 
that attain the asymptotic tightness in the sense of (T1) in the i.i.d. case (see e.g., \cite{Berry}).
The paper \cite[Theorem 2]{Herve} extended the Berry-Essen theorem to the Markov chain,
and gave similar upper and lower $O(1)$-computable bounds for the Markov chain.

In the case of simple hypothesis testing,
three kinds of the asymptotic tightness are characterized as follows.
\begin{itemize}
\item[(H1)] Constant constraint for the first error probability
$\epsilon=const$.
\item[(H2)] Moderate deviation type constraint for the first error probability $\epsilon=e^{-n^{1-2t}r}$ with $t \in (0,\frac{1}{2})$.
\item[(H3)] Large deviation type constraint for the first error probability $\epsilon=e^{-nr}$ (Hoeffding bound \cite{N,NK}).
\end{itemize}
In the i.i.d. case (including the quantum case),
the paper \cite{Tomamichel}
derived lower and upper $O(1)$-computable finite-length bounds 
for the second error probability 
that attain the asymptotic tightness in the sense of (H1). 
Also, it is not difficult to derive 
an upper $O(1)$-computable finite-length bound for the second error probability 
that attains the asymptotic tightness in the sense of (H2) nor (H3).
However, 
no study addressed 
a lower $O(1)$-computable finite-length bound for the second error probability 
that attains the asymptotic tightness in the sense of (H2) nor (H3)
even in the i.i.d. case.

This paper derives the finite-length bounds for the above topics
satisfying the above requirement.
Firstly, we derive upper and lower bounds of 
the cumulant generating function when $n$ observations are given.
We show that these limits recover
the asymptotic cumulant generating function \cite{DZ}.
Using our evaluation of the cumulant generating function, 
we also derive upper and lower $O(1)$-computable bounds of the tail probability
that attains the asymptotic tightness in the sense of (T2) and (T3)
in the Markov chain as well as in the i.i.d. case.
Our analysis covers 
the sample mean of two-input functions like $g(X_{k+1},X_k)$
as well as the simple sample mean $\sum_{i=1}^n \frac{X_i}{n}$.
As a byproduct, employing the evaluation of the cumulant generating function, 
we simply reproduce the central limit theorem \cite{CLT2,CLT3,CLT4}.
Indeed, since we address a general function $g(X_{k+1},X_k)$,
our evaluations can be applied to the sample mean of the hidden Markov random variables.

For the simple hypothesis testing,
this paper derives the lower and upper $O(1)$-computable bounds of the second error probability under the same constraint with finite observations
whose limits recover the asymptotic bound (H3)\cite{N,NK}
and the asymptotic bound (H2).
For describing these finite-length bounds, 
we employ the notations given by the transition matrix version of information geometry, i.e.,
the relative entropy (Kullback Leibler divergence), 
the relative R\'{e}nyi entropy,
exponential family, 
natural parameter,
and expectation parameter \cite{NK,HN,HW14-1}.
Further, 
employing the Markov version of the Berry-Essen theorem \cite[Theorem 2]{Herve}, 
we also obtain another type $O(1)$-computable finite-length bound,
which derives the asymptotic bound (H1)
as a generalization of the result by Strassen \cite{Strassen}.

Indeed, there are two ways to define a transition matrix version of
exponential family.
We employ the definition by \cite{NK,HN,HW14-1}, which is different from the definition by \cite{Feigin,Hudson,Bhat,Bhat2,Stefanov,Kuchler-Sorensen,Sorensen}.
The exponential family to be used plays an essential role in our derivation.
That is, the exponential family 
enables us to discuss simple hypothesis testing and the parameter estimation \cite{HW14-1} in a unified manner.
The obtained bounds are used for the evaluations of several 
information theoretical problems \cite{W-H2}.

The remaining of this paper is organized as follows.
Section \ref{s2} gives the brief summary of obtained results.
In Section \ref{s4}, we review 
an exponential family of transition matrices \cite{NK,HN,HW14-1}
in the one-parameter case.
In Section \ref{s11}, 
we characterize Legendre transform of the potential function.
In Section \ref{s12}, we give useful upper and lower bounds of the cumulant generating function.
In Section \ref{s13}, we give a simple alternative proof of the central limit theorem of Markov chain.
In Section \ref{s14}, we also give useful upper and lower bounds of the tail probability with finite observation, 
which produces the large deviation bound of the tail probability.
In Section \ref{s15},
using these bounds, we derive upper and lower bounds of
the second error probability of simple hypothesis testing,
which yields the Hoeffding type bounds.

\section{Summary of results}\Label{s2}
Here, we prepare notations and definitions.
For a given transition matrix $W$ over $\cX$,
we define 
$W^{\times n}(x_n,x_{n-1}, \ldots,x_1|\bar{x}):=
W(x_n|x_{n-1})W(x_{n-1}|x_{n-2})\cdots W(x_1|\bar{x})$
and 
$W^n(x|\bar{x})= \sum_{x_{n-1}, \ldots,x_1} W^{\times n}(x,x_{n-1}, \ldots,x_1|\bar{x})$.
For a given distribution $P$ on $\cX$
and a transition matrix $V$ from $\cX$ to $\cY$,
we define $V \times P(y,x):=V(y|x)P(x)$.
and $V P(y):=\sum_x V \times P(y,x)$.

A non-negative matrix $W$ is called {\it irreducible} 
when for each $x,\bar{x}\in \cX$, there exists a natural number $n$ such that $W^n(x|\bar{x})>0$ \cite{MU}.
An irreducible matrix $W$ is called {\it ergodic} when 
there are no input $\bar{x}$ and no integer $n'$
such that $W^{n} (\bar{x}|\bar{x})=0$ unless 
$n$ is divisible by $n'$ \cite{MU}.
It is known that 
the output distribution of $W^n P$ converges to the stationary distribution of $W$
for a given ergodic transition matrix $W$ \cite{kemeny-snell-book,MU}.

\subsection{Cumulant generating function}
Assume that the random variable $X_n$ obeys the Markov process with the 
transition matrix $W(x|\bar{x})$.
In this paper, for a two-input function $g(x,\bar{x})$,
we focus on the random variable 
$g^n(X^{n+1}):= \sum_{i=1}^n g(X_{i+1},X_{i})$,
and $X^{n+1}:= (X_{n+1}, \ldots,X_1)$.
This is because a two-input function $g(x,\bar{x})$ is closely related to an exponential family of transition matrices.
Indeed, the simple sample mean can be treated in the formulation by choosing $g(x,\bar{x})$ as $x$ or $\bar{x}$.
Here, when we choose a general function $g(x)$,
$g^n(X^{n+1})= \sum_{i=1}^n g(X_{i+1})$ is the sample mean of the hidden Markov random variable.
So, our results can applied to the hidden Markov random case.

We 
denote the Perron-Frobenius eigenvalue of 
$W(x|\bar{x})e^{\theta g(x,\bar{x})}$ by $\lambda_\theta$
and define the potential function 
$\phi(\theta):= \log \lambda_\theta$.
Then, we focus on the cumulant generating function 
$\phi_n(\theta):=\log \mathsf{E}[e^{\theta g^n(X^{n+1})}]$,
where $\mathsf{E}$ denotes the expectation.
We will define 
functions $\underline{\delta}(\theta)$ and $\overline{\delta}(\theta)$ in Section \ref{s12}
so that $\underline{\delta}(\theta)\to 0$ and $\overline{\delta}(\theta) \to 0$ as $\theta\to 0$.
Then, we will evaluate $\phi_n(\theta)$ as
\begin{align}
n \phi(\theta)+\underline{\delta}(\theta) \le \phi_n(\theta) 
\le n \phi(\theta)+\overline{\delta}(\theta).
\end{align}

\subsection{Tail probability}
Given an irreducible and ergodic transition matrix $W$,
we will evaluate the tail probability of the random variable $g^n(X^{n+1})$
by using the one-parameter exponential family $W_\theta$ given in \cite[Section 3]{HW14-1}
and 
the relative entropies $D(W_{\theta}\|W_{\bar{\theta}})$
and $D_{1+s}(W_\theta\|W_{\bar{\theta}})$ explained in Section \ref{s4}.
as follows.
For any $a > \mathsf{E}[g]$, we will show
\begin{eqnarray}
- \log P \{ \tilde{g}^n(X^{n+1}) \ge n a \} \ge 
 n D(W_{{\phi'}^{-1}(a)} \| W_{0} )- \overline{\delta}(\theta),\Label{27-32b}
\end{eqnarray}
where ${\phi'}^{-1}(a)$ is the inverse function of 
$\phi'(\theta)=\frac{d \phi}{d \theta}(\theta)$, 
i.e., 
$\frac{d \phi}{d \theta}({\phi'}^{-1}(a))=a$.
Conversely, we will show
\begin{align}
\nonumber
\lefteqn{ - \log P \{ \tilde{g}^n(X^{n+1}) \ge n a \}  }  \\
\le & 
 \inf_{s > 0 \atop \theta > {\phi'}^{-1}(a)} 
n D_{1+s}(W_\theta\|W_0)
+\frac{1}{s}[\overline{\delta}((1+s)\theta) - \underline{\delta}(\theta)] \nonumber \\
 &\quad -\frac{1+s}{s}
 \log \left(1- e^{-n D(W_{{\phi'}^{-1}(a)}\|W_{\theta})
+\overline{\delta}({\phi'}^{-1}(a)) -\underline{\delta}(\theta)
} \right) . 
   \Label{27-33b}
\end{align}

Similarly, for $a < \mathsf{E}[g]$, we will show
\begin{eqnarray}
- \log P \{ \tilde{g}^n(X^{n+1}) \le n a \} \ge  
 n D(W_{{\phi'}^{-1}(a)} \| W_0)- \overline{\delta}(\theta).
\end{eqnarray}
Conversely, we will show
\begin{align}
\nonumber
\lefteqn{ - \log P \{ \tilde{g}^n(X^{n+1}) \le n a \} }  \\
\le & 
\inf_{s > 0 \atop \theta < {\phi'}^{-1}(a)} 
n D_{1+s}(W_\theta\|W_0)
+\frac{1}{s}[\overline{\delta}((1+s)\theta) - \underline{\delta}(\theta)] \nonumber \\
 &\quad -\frac{1+s}{s}
 \log \left(1- e^{-n D(W_{{\phi'}^{-1}(a)}\|W_{\theta})
+\overline{\delta}({\phi'}^{-1}(a)) -\underline{\delta}(\theta)
} \right) . 
  \Label{eq:7-1} 
\end{align}

\subsection{Simple hypothesis testing}
Now, we consider the hypothesis testing with the two hypotheses
$W_0^{n-1}\times P_0$ and $W_1^{n-1}\times P_1$.
Then, we consider 
\begin{align}
\nonumber
&\beta_\epsilon(W_1^{n-1}\times P_1 \| W_0^{n-1}\times P_0)\\
:=&
\min_{S \subset \cX^{n+1}}
\{ 1-W_0^{n-1}\times P_0(S) | W_1^{n-1}\times P_1(S)\le \epsilon\}.
\Label{23-1}
\end{align}
Using the one-parameter exponential family $W_\theta$ of transition matrices with the generator
$g(x,\bar{x}):= \log \frac{W_1(x|\bar{x})}{W_0(x|\bar{x})}$,
the cumulant generating function $\phi(\theta)$ defined by $g(x,\bar{x})$,
we will show that
\begin{align}
\nonumber
& 
 \sup_{0 \le \theta \le 1} \frac{n (-\theta  r - \phi(\theta))-
\underline{\delta}(\theta) }{1-\theta}
 \\
\le & -\log \beta_{e^{-nr}}(W_1^{\times n}\times P_1 \| W_0^{\times n}\times P_0) \nonumber \\
\le & \inf_{s>0,\theta \in (\hat{\theta}(r),1)}
n D_{1+s}(W_{\theta}\|W_0) 
+\frac{1}{s}( \overline{\delta}( (1+s) \theta) -(1+s) \underline{\delta}(\theta))\nonumber \\
&\quad- \frac{1+s}{s} \log (1- 2 e^{-n D(W_{\hat{\theta}(r)}\|W_{\theta})
-\underline{\delta}(\theta)
+\frac{(1-\theta) \overline{\delta}(\hat{\theta}(r)) }{1-\hat{\theta}(r)} }) 
,
\Label{27-5b} 
\end{align}
where
the functions 
$\hat{\theta}(r)$ is given in Section \ref{s11}.
We will also asymptotically characterize 
$\beta_\epsilon(W_1^{n-1}\times P_1 \| W_0^{n-1}\times P_0)$ with a fixed $\epsilon$.

\section{Geometric structure for transition matrices}\Label{s4}
In this section, 
we review the definition and the properties 
of the one-parameter exponential family of transition matrices \cite{NK,HN}
by following the logical order of \cite[Section 4]{HW14-1}
although a large part of results for exponential family of transition matrices
were obtained by Nagaoka \cite{HN} and Nakagawa and Kanaya \cite{NK}.
This is because the logical order of \cite[Section 4]{HW14-1}
is more suitable for the context of this paper. 
These relations are explained in \cite[Remarks 3.5, 4.12, and 4.14]{HW14-1}.
Note that the definition of exponential family in this paper is 
different from that by the papers \cite{Feigin,Hudson,Bhat,Bhat2,Stefanov,Kuchler-Sorensen,Sorensen}
as explained in \cite[Remark 4.13]{HW14-1}.

\subsection{Preparations}
For the definition and the properties 
of the one-parameter exponential family of transition matrices,
we prepare the following things.

\begin{lemma}\cite[Lemma 3.1]{HW14-1} 
\Label{L1}
Consider an irreducible and ergodic transition matrix $W$ over $\cX$
and a real-valued function $g$ on $\cX \times \cX$.
Then, we define the support 
$\cX^2_W:=\{(x,\bar{x}) \in \cX^2| W(x|\bar{x})>0\}$.
Define $\phi(\theta)$ as the logarithm of the Perron-Frobenius eigenvalue of the matrix:
\begin{align}
\tilde{W}_{\theta}(x|\bar{x}):= W(x|\bar{x}) e^{\theta g(x,\bar{x})}.
\end{align}
Then, the function $\phi(\theta)$ is convex.
Further, the following conditions are equivalent.
\begin{itemize}
\item[(1)] 
No real-valued function $f$ on $\cX$ satisfies that 
$g(x,\bar{x})=f(x)-f(\bar{x})+c$ for any $(x,\bar{x})\in \cX^2_W$ with a constant $c \in \mathbb{R}$.
\item[(2)]
The function $\phi(\theta)$ is strictly convex, i.e., 
$\frac{d^2 \phi}{d \theta^2}(\theta)>0$ for any $\theta$.
\item[(3)]
$\frac{d^2 \phi}{d \theta^2}(\theta)|_{\theta=0}>0$.
\end{itemize}
\end{lemma}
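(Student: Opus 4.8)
\emph{Proof strategy.} The plan is to combine Perron--Frobenius theory with a martingale (Poisson-equation) decomposition, and to reduce the ``for all $\theta$'' content of condition~(2) to the single base point $\theta=0$ by tilting. I would first record the Perron--Frobenius preliminaries: since $W$ is irreducible and ergodic, so is the nonnegative matrix $\tilde W_\theta$, hence its spectral radius $\lambda_\theta$ is a simple eigenvalue carrying strictly positive left and right eigenvectors $u_\theta,v_\theta$, and — being a simple root of the characteristic polynomial, whose coefficients are analytic in $\theta$ — it depends real-analytically on $\theta$; thus $\phi=\log\lambda_\theta$ is real-analytic, with $\lambda_0=1$, $v_0=\mathbf{1}$, and $u_0=\pi$ the stationary distribution of $W$. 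For convexity I would fix a state $\bar x$ and consider $Z_n(\theta):=(\tilde W_\theta^{\,n}\mathbf{1})(\bar x)=\sum_{x_1,\dots,x_n}\big(\prod_i W(x_i|x_{i-1})\big)\exp\!\big(\theta\sum_i g(x_i,x_{i-1})\big)$, a positive combination of exponentials $e^{\theta t}$, so $\log Z_n$ is convex by H\"older; since $\tfrac1n\log Z_n(\theta)\to\phi(\theta)$ by Perron--Frobenius, $\phi$ is convex as a pointwise limit of convex functions.

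The heart of the matter is the equivalence, and I would isolate the single statement
\[
\phi''(0)=0\iff \exists\,f:\cX\to\mathbb{R},\ c\in\mathbb{R}\ :\ g(x,\bar x)=f(x)-f(\bar x)+c\ \text{ on }\cX^2_W .
\]
The direction ``$\Leftarrow$'' is immediate: if $g=f(x)-f(\bar x)+c$ on $\cX^2_W$, then $\tilde W_\theta=e^{\theta c}\,D\,W\,D^{-1}$ with $D=\mathrm{diag}(e^{\theta f(x)})$, so $\lambda_\theta=e^{\theta c}$, i.e.\ $\phi(\theta)=\theta c$ is affine and $\phi''\equiv0$. This observation also shows that the failure of (1) makes $\phi$ affine, hence makes both (2) and (3) fail, so only the converse remains.

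For ``$\Rightarrow$'', after replacing $g$ by $g-\phi'(0)$ (a linear shift of $\phi$, irrelevant to $\phi''$ and to the coboundary property) I may assume $\mathsf{E}_\pi[g]=0$. I would then identify $\phi''(0)$ with the asymptotic variance $\sigma^2=\lim_n\tfrac1n\mathrm{Var}\big(g^n(X^{n+1})\big)$ — either by second-order perturbation of the simple eigenvalue $\lambda_\theta$, or by differentiating $\phi(\theta)=\lim_n\tfrac1n\phi_n(\theta)$ under suitable uniform control — and use the Poisson equation $(I-\hat W)h=\bar g$ with $\bar g(\bar x):=\sum_x W(x|\bar x)g(x,\bar x)$, which is solvable because $\mathsf{E}_\pi[\bar g]=\mathsf{E}_\pi[g]=0$ puts $\bar g$ in the range of $I-\hat W$. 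The telescoping/martingale decomposition $g^n(X^{n+1})=\sum_{i=1}^n M(X_{i+1},X_i)+h(X_1)-h(X_{n+1})$, with $M(x,\bar x):=g(x,\bar x)+h(x)-h(\bar x)$ satisfying $\sum_x W(x|\bar x)M(x,\bar x)=0$, then gives $\sigma^2=\sum_{\bar x}\pi(\bar x)\sum_x W(x|\bar x)M(x,\bar x)^2$. Hence $\phi''(0)=0$ forces $M(x,\bar x)=0$ on $\cX^2_W$, i.e.\ $g(x,\bar x)=h(\bar x)-h(x)$ there, which is the claimed form (with $f=-h$ and, undoing the shift, $c=\phi'(0)$). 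I expect this ``$\Rightarrow$'' half — pinning $\phi''(0)$ to the asymptotic variance, and extracting vanishing of $M$ \emph{exactly} on $\cX^2_W$ rather than everywhere — to be the main obstacle.

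Finally I would upgrade from $\theta=0$ to all $\theta$ by tilting. For fixed $\theta$ put $W_\theta(x|\bar x):=\tilde W_\theta(x|\bar x)\,v_\theta(x)/(\lambda_\theta v_\theta(\bar x))$, again an irreducible ergodic transition matrix with the same support $\cX^2_W$; a diagonal-similarity computation shows that the potential function attached to $(W_\theta,g)$ at parameter $\eta$ is $\phi(\theta+\eta)-\phi(\theta)$, whose second derivative at $\eta=0$ equals $\phi''(\theta)$. Applying the boxed equivalence to $W_\theta$, and noting that the coboundary condition refers only to $\cX^2_W$ (hence is the same condition for $W_\theta$ and for $W$), gives $\phi''(\theta)=0\iff g$ is of coboundary-plus-constant form on $\cX^2_W$, for every $\theta$. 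Consequently the negation of (1) is equivalent to ``$\phi''(\theta)=0$ for some, equivalently all, $\theta$'', which is equivalent to $\phi$ being affine, i.e.\ to the negation of (2); this proves $(1)\Leftrightarrow(2)$. Since $(2)\Rightarrow(3)$ is trivial and $\phi$ affine implies $\phi''(0)=0$, the negation of (3) also entails the negation of (2), so $(3)\Rightarrow(2)$, closing the loop.
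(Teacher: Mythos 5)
The paper itself offers no proof of this lemma --- it is imported verbatim from \cite[Lemma 3.1]{HW14-1} --- so there is no in-text argument to compare against; I can only assess your proposal on its merits, and it is correct and essentially complete as a strategy. The convexity argument (H\"older on $Z_n$, then pass to the limit), the diagonal-similarity observation that a coboundary-plus-constant $g$ forces $\phi(\theta)=\theta c$, the identification $\phi''(0)=\lim_n n^{-1}\mathrm{Var}(g^n(X^{n+1}))=\sum_{\bar x}\pi(\bar x)\sum_x W(x|\bar x)M(x,\bar x)^2$ via the Poisson equation and the martingale decomposition, and the tilting step (whose similarity computation correctly yields the potential $\eta\mapsto\phi(\theta+\eta)-\phi(\theta)$ for $(W_\theta,g)$ with unchanged support $\cX^2_W$) do close the loop $(1)\Leftrightarrow(2)\Leftrightarrow(3)$. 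Three points need explicit care in a write-up. First, of your two proposed routes to $\phi''(0)=\sigma^2$, commit to the eigenvalue-perturbation one: pointwise convergence of the convex functions $n^{-1}\phi_n\to\phi$ does not by itself justify differentiating twice under the limit, whereas analytic perturbation of the simple Perron root (or the explicit variance formula the present paper records in Theorem \ref{L20B}) gives the identity directly. Second, in the tilting step the eigenvector entering $W_\theta(x|\bar x)=\lambda_\theta^{-1}v_\theta(x)\tilde W_\theta(x|\bar x)v_\theta(\bar x)^{-1}$ must be the Perron eigenvector of $\tilde W_\theta^{T}$ (a left eigenvector in the paper's row/column convention, its $\hat P_\theta$ of Section \ref{s4}); with the right eigenvector the column sums are not $1$. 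Third, your centering step tacitly uses $\phi'(0)=\mathsf{E}_{W\times\pi}[g]$, which is itself a first-order perturbation fact and should be stated. None of these is a gap in the idea; they are places where the argument must be made precise.
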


Using Lemma \ref{L1},
given two distinct ergodic transition matrices $W$ and $V$
with the same support,
we define the relative entropy and the relative R\'{e}nyi entropies. 
For this purpose, we denote the logarithm of
the Perron-Frobenius eigenvalue of the matrix
$W(x|\bar{x})^{1+s}V(x|\bar{x})^{-s}$ by $\varphi(1+s)$.
Then, we define 
\begin{align}
D({W} \| V):= 
\frac{d \varphi}{d s}(1) \Label{1-10},\quad
D_{1+s}({W} \| V):=
\frac{\varphi(1+s)}{s}.
\end{align}
Note that the limit $\lim_{s \to 0}D_{1+s}({W} \| V)$ equals
$D({W} \| V)$.
Since
$W$ and $V$ are distinct,
the function $\log \frac{W(x|\bar{x})}{V(x|\bar{x})}$ satisfies the condition for the function $g$ in Lemma \ref{L1}.
Hence, the function $s \mapsto s D_{1+s}({W} \| V)$ is strictly convex,
which implies that
$s D_{1+s}({W} \| V) <
(1-\frac{\bar{s}}{s})0+ \frac{s}{\bar{s}}\bar{s} D_{1+\bar{s}}({W} \| V)$
for $0<s<\bar{s}$.
Since a similar relation holds for $0>s>\bar{s}$, 
the relative R\'{e}nyi entropy $D_{1+s}({W} \| V)$ is strictly monotone increasing with respect to $s$.

\subsection{Exponential family}
Now, we focus on a transition matrix $W(x|\bar{x})$ from $\cX$ to $\cX$
and a real-valued function $g$ on $\cX \times \cX$
satisfying the condition in Lemma \ref{L1}.
In the following, we assume that the function $g$ satisfies condition in Lemma \ref{L1}.
Then, we will define the matrix ${W}_{\theta}(x|\bar{x})$ from $\cX$ to $\cX$ for $\theta$
by following steps below.
For this purpose, we define the matrix $\tilde{W}_{\theta}(x|\bar{x})$ from $\cX$ to $\cX$ by 
\begin{align}
\tilde{W}_{\theta}(x|\bar{x}):= W(x|\bar{x}) e^{\theta g(x,\bar{x})}.
\Label{5-6}
\end{align}
Using the Perron-Frobenius eigenvalue $\lambda_{\theta}$ of $\tilde{W}_{\theta}$,
we define the potential function 
\begin{align}
\phi(\theta):=\log \lambda_{\theta}.\Label{4-1-2}
\end{align} 
Due to Lemma \ref{L1}, the second derivative 
$\frac{d^2 \phi}{d \theta^2}$ is strictly positive.
Hence, the potential function $\phi(\theta)$ is strictly convex.
In the following, using the strictly convex function $\phi(\theta)$,
we define a one-parameter exponential family for transition matrices.


Note that, since the value $\sum_{x}\tilde{W}_{\theta}(x|\bar{x})$ generally depends on $\bar{x}$, 
we cannot make a transition matrix by simply multiplying a constant with the matrix $\tilde{W}_{\theta}$.
To make a transition matrix from the matrix $\tilde{W}_{\theta}$, we recall that
a non-negative matrix $V$ from $\cX$ to $\cX$ is a transition matrix
if and only if the vector $(1, \ldots,1)^T$ is an eigenvector of the transpose $V^T$.
In order to resolve this problem, we focus on the structure of the matrix $\tilde{W}_{\theta}$.
We denote the Perron-Frobenius eigenvectors
of $\tilde{W}_{\theta}$ and its transpose $\tilde{W}_{\theta}^T$
by $\tilde{P}_{\theta}$ and $\hat{P}_{\theta}$.
Since the irreducibility of $W$ guarantees 
the irreducibility of $\tilde{P}_{\theta}$,
the relation $\hat{P}_{\theta}(x) >0$ holds. 
According to \cite{NK,HN,CLT2,HW14-1}\footnote{Appendix of \cite{HW14-1} explains detailed relation the papers \cite{NK,HN,CLT2,HW14-1}
for an exponential family of transition matrices.}, 
we define the matrix ${W}_{\theta}(x|\bar{x})$ as
\begin{align}
{W}_{\theta}(x|\bar{x}):= \lambda_{\theta}^{-1} \hat{P}_{\theta}(x)
\tilde{W}_{\theta}(x|\bar{x})\hat{P}_{\theta}(\bar{x})^{-1}.
\end{align}
The matrix ${W}_{\theta}(x|\bar{x})$ is a transition matrix 
because vector $(1, \ldots,1)^T$ is an eigenvector of the transpose $W_{\theta}^T$.
In the following, we call the family of transition matrices 
${\cal E}:=\{ {W}_{\theta} \}$ an {\it exponential family} of transition matrices with the generator $g$.

Using the potential function $\phi(\theta)$,
we explain several concepts 
for transition matrices based on Lemma \ref{L1}, formally.
We call the parameter $\theta$ the natural parameter, and the parameter $\eta(\theta):= 
\phi'(\theta)=\frac{d \phi}{d \theta}(\theta)$ the expectation parameter.
For $\eta$,
we define the inverse function ${\phi'}^{-1}(\eta)$ of
$\phi'$ as 
\begin{align}
\phi'({\phi'}^{-1}(\eta))=\eta.\Label{4-1-1}
\end{align}
Then, we define the Fisher information for the natural parameter by 
the second derivative 
$\frac{d^2 \phi}{d \theta^2}(\theta)$.
The Fisher information for the expectation parameter 
is given as 
$\frac{d^2 \phi}{d \theta^2}(\theta)^{-1}$.

\begin{lemma}\cite[Lemma 4.4]{HW14-1}\Label{L7}
The relative entropy and the relative R\'{e}nyi entropies between two transition matrices 
${W}_{\theta}$ and ${W}_{\bar{\theta}}$ are characterized as
\begin{align}
D({W}_{\theta} \| {W}_{\bar{\theta}})= &
(\theta-{\bar{\theta}})\frac{d \phi}{d \theta}(\theta)- \phi(\theta)+ \phi(\bar{\theta}) \Label{1-1}\\
D_{1+s}({W}_{\theta} \| {W}_{\bar{\theta}})=&
\frac{\phi((1+s)\theta -s \bar{\theta})-(1+s) \phi(\theta) +s \phi(\bar{\theta})}{s}.
 \Label{1-2}
\end{align}
\end{lemma}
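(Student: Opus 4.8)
\medskip
\noindent\emph{Proof proposal.}\quad The plan is to compute explicitly the function $\varphi$ that enters the definitions $D(W_\theta\|W_{\bar\theta})=\frac{d\varphi}{ds}(1)$ and $D_{1+s}(W_\theta\|W_{\bar\theta})=\varphi(1+s)/s$, and then to read off both identities. We may assume $\theta\neq\bar\theta$, since for $\theta=\bar\theta$ both right-hand sides vanish. As $W_\theta$ and $W_{\bar\theta}$ are obtained from $W$ by multiplying by $e^{\theta g}$, $e^{\bar\theta g}$ and by strictly positive diagonal factors, they are distinct ergodic transition matrices sharing the support $\cX^2_W$ of $W$, so $\varphi$ is legitimately defined via the Perron--Frobenius eigenvalue of the matrix with $(x,\bar x)$-entry $W_\theta(x|\bar x)^{1+s}W_{\bar\theta}(x|\bar x)^{-s}$.

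First I would substitute $W_\theta(x|\bar x)=\lambda_\theta^{-1}\hat P_\theta(x)\tilde W_\theta(x|\bar x)\hat P_\theta(\bar x)^{-1}$ and the analogous expression for $W_{\bar\theta}$. The powers of the $\tilde W$'s collapse to a single matrix of the same type at a shifted parameter,
\[
\tilde W_\theta(x|\bar x)^{1+s}\,\tilde W_{\bar\theta}(x|\bar x)^{-s}
= W(x|\bar x)\,e^{((1+s)\theta-s\bar\theta)\,g(x,\bar x)}
= \tilde W_{(1+s)\theta-s\bar\theta}(x|\bar x),
\]
the $\lambda$'s produce the scalar $\lambda_\theta^{-(1+s)}\lambda_{\bar\theta}^{s}$, and the $\hat P$-factors assemble into a two-sided diagonal conjugation: writing $D(x):=\hat P_\theta(x)^{1+s}\hat P_{\bar\theta}(x)^{-s}>0$, the row factor equals $D(x)$ and the column factor equals $D(\bar x)^{-1}$, so
\[
W_\theta(x|\bar x)^{1+s}W_{\bar\theta}(x|\bar x)^{-s}
= \lambda_\theta^{-(1+s)}\lambda_{\bar\theta}^{s}\; D(x)\,\tilde W_{(1+s)\theta-s\bar\theta}(x|\bar x)\,D(\bar x)^{-1}.
\]
Conjugation by the invertible diagonal matrix with positive diagonal $D(x)$ does not change the spectrum, so the Perron--Frobenius eigenvalue of the left-hand side is $\lambda_\theta^{-(1+s)}\lambda_{\bar\theta}^{s}\,\lambda_{(1+s)\theta-s\bar\theta}$, whence
\[
\varphi(1+s)=\phi\big((1+s)\theta-s\bar\theta\big)-(1+s)\phi(\theta)+s\phi(\bar\theta).
\]

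Dividing by $s$ gives \eqref{1-2} immediately. For \eqref{1-1}, put $t=1+s$ so that $\varphi(t)=\phi\big(t(\theta-\bar\theta)+\bar\theta\big)-t\phi(\theta)+(t-1)\phi(\bar\theta)$; differentiating in $t$ and setting $t=1$, where the argument of $\phi'$ becomes $\theta$, gives $\frac{d\varphi}{ds}(1)=(\theta-\bar\theta)\phi'(\theta)-\phi(\theta)+\phi(\bar\theta)$, which is \eqref{1-1}. The relation $\lim_{s\to0}D_{1+s}(W_\theta\|W_{\bar\theta})=D(W_\theta\|W_{\bar\theta})$ is then also apparent.

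The calculation itself is routine; I expect the only point requiring care to be the middle step, namely that the $\hat P$-factors coming from $W_\theta^{1+s}$ and from $W_{\bar\theta}^{-s}$ combine into a genuine two-sided conjugation $D(x)\,(\cdot)\,D(\bar x)^{-1}$ of $\tilde W_{(1+s)\theta-s\bar\theta}$, and that such a conjugation preserves the Perron--Frobenius eigenvalue. This relies on the strict positivity of $\hat P_\theta$ and $\hat P_{\bar\theta}$ --- guaranteed by the irreducibility of $W$ --- and on the observation that $\tilde W_{(1+s)\theta-s\bar\theta}$ has the same zero pattern as $W$, hence is a nonnegative irreducible matrix, so Perron--Frobenius theory applies to it and $\phi$ is well defined at the shifted argument $(1+s)\theta-s\bar\theta$ for every real $s$.
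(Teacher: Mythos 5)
Your derivation is correct. The paper itself gives no proof of this lemma---it is imported verbatim as \cite[Lemma 4.4]{HW14-1}---so there is nothing to compare against line by line; your argument is the natural self-contained one. The computation checks out: the factors $W(x|\bar x)^{1+s}W(x|\bar x)^{-s}$ collapse to $W(x|\bar x)$, the exponentials combine to $e^{((1+s)\theta-s\bar\theta)g(x,\bar x)}$, the eigenvalue prefactors contribute $-(1+s)\phi(\theta)+s\phi(\bar\theta)$ additively, and the remaining positive diagonal factors form a similarity transformation, which leaves the Perron--Frobenius eigenvalue unchanged. Hence $\varphi(1+s)=\phi((1+s)\theta-s\bar\theta)-(1+s)\phi(\theta)+s\phi(\bar\theta)$, and \eqref{1-2} and \eqref{1-1} follow by dividing by $s$ and by differentiating at $s=0$ (where the argument of $\phi'$ is $\theta$), respectively. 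You also correctly flag the two points of care: strict positivity of $\hat P_\theta$, $\hat P_{\bar\theta}$ (from irreducibility) so that the conjugating diagonal matrix is invertible, and the fact that $W_\theta$ and $W_{\bar\theta}$ share the support of $W$ and are distinct for $\theta\neq\bar\theta$ (by strict convexity of $\phi$ under the hypothesis of Lemma \ref{L1}), so that $\varphi$ is legitimately defined.
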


In the following,
$\mathsf{E}_{W}$ denotes the expectation with respect to 
the joint distribution 
when the conditional distribution is given by the transition 
matrix $W$ and the input distribution is given by
the stationary distribution of $W$.
Then, for a generator $g$ and a real number $a$,
we define ${\cal M}_{g,a}$ as
\begin{align}
{\cal M}_{g,a}:= \{W| \mathsf{E}_{W}g(X,X')= a\}.
\end{align}

A transition matrix version of the Pythagorean theorem \cite{AN} 
holds as follows.
\begin{theorem}
\cite[Lemma 5]{NK},\cite[Corollary 4.8]{HW14-1}
\Label{T5-1}
For a transition matrix $V$, a generator $g$, and a real number $a$,
we define 
\begin{align}
V^*:= \argmin_{W \in {\cal M}_{g,a}}D(W\|V).
\end{align}
(1) Any transition matrix $W \in {\cal M}_{g,a}$ satisfies
\begin{align}
D(W\|V)=D(W\|V^*)+D(V^*\|V).
\end{align}
(2) The transition matrix $V^*$ is the intersection of 
the set ${\cal M}_{g,a}$ and the exponential family generated by $g$
containing $V$.
\end{theorem}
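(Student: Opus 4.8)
The plan is to guess the minimizer and then verify one Pythagorean identity from which both claims fall out. Applying the construction of Section~\ref{s4} with base transition matrix $V$ and generator $g$ gives an exponential family $\{V_\theta\}$ with $V_0=V$, potential $\phi(\theta)=\log\lambda_\theta$ (here $\lambda_\theta$ is the Perron--Frobenius eigenvalue of $\tilde V_\theta(x|\bar x):=V(x|\bar x)e^{\theta g(x,\bar x)}$), and positive left Perron--Frobenius eigenvector $\hat P_\theta$ of $\tilde V_\theta^T$. First I would record the mean representation
\begin{align}
D(U\|V)=\mathsf{E}_U\!\left[\log\frac{U(X|X')}{V(X|X')}\right],
\end{align}
which follows from first-order Perron--Frobenius perturbation applied to the definition \eqref{1-10} (the right eigenvector of the transition matrix $U$ is $(1,\ldots,1)^T$ and its left eigenvector is the stationary distribution of $U$, so the $s$-derivative at $s=1$ of the log Perron--Frobenius eigenvalue of $U^{1+s}V^{-s}$ is precisely that average); the same computation also gives $\phi'(\theta)=\mathsf{E}_{V_\theta}[g]$, which is why $\eta=\phi'$ is called the expectation parameter. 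By the strict convexity of $\phi$ (Lemma~\ref{L1}), $\phi'$ is strictly increasing, so there is a unique $\theta^{*}$ with $\phi'(\theta^{*})=a$; then $V_{\theta^{*}}\in{\cal M}_{g,a}$, and I claim $V^{*}=V_{\theta^{*}}$.

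The crux is the affine decomposition
\begin{align}
\log\frac{V_{\theta^{*}}(x|\bar x)}{V(x|\bar x)}=\theta^{*}g(x,\bar x)-\phi(\theta^{*})+f(x)-f(\bar x),\qquad f(x):=\log\hat P_{\theta^{*}}(x),
\end{align}
which is immediate from $V_\theta(x|\bar x)=\lambda_\theta^{-1}\hat P_\theta(x)\tilde V_\theta(x|\bar x)\hat P_\theta(\bar x)^{-1}$. For any transition matrix $W$ with stationary distribution $\nu_W$, the coboundary term averages to zero, $\mathsf{E}_W[f(X)-f(X')]=0$, since $\mathsf{E}_W[f(X)]=\sum_x\nu_W(x)f(x)=\mathsf{E}_W[f(X')]$ by $\nu_W W=\nu_W$. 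Hence, for every $W\in{\cal M}_{g,a}$,
\begin{align}
\mathsf{E}_W\!\left[\log\frac{V_{\theta^{*}}(X|X')}{V(X|X')}\right]=\theta^{*}a-\phi(\theta^{*}),
\end{align}
a value that does not depend on $W$; taking $W=V_{\theta^{*}}\in{\cal M}_{g,a}$ and using the mean representation identifies it as $D(V_{\theta^{*}}\|V)$ (in agreement with \eqref{1-1} at $\bar\theta=0$ and $\phi(0)=0$).

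To conclude, for $W\in{\cal M}_{g,a}$ I would split $\log\frac{W}{V}=\log\frac{W}{V_{\theta^{*}}}+\log\frac{V_{\theta^{*}}}{V}$ and take $\mathsf{E}_W$; the mean representation together with the previous display yields
\begin{align}
D(W\|V)=D(W\|V_{\theta^{*}})+D(V_{\theta^{*}}\|V),
\end{align}
which is part~(1) with $V^{*}=V_{\theta^{*}}$. Since $D(W\|V_{\theta^{*}})\ge 0$, with equality exactly when $W=V_{\theta^{*}}$ (apply Jensen to the mean representation, using that $\nu_W$ is everywhere positive by ergodicity), this forces $D(W\|V)\ge D(V_{\theta^{*}}\|V)$ on ${\cal M}_{g,a}$ with equality only at $V_{\theta^{*}}$. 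Thus $V_{\theta^{*}}$ is the unique minimizer, and by construction it is the intersection of ${\cal M}_{g,a}$ with the exponential family through $V$; this is part~(2) and retroactively justifies the identification $V^{*}=V_{\theta^{*}}$ used above.

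The step I expect to cost the most care is establishing the mean representation $D(U\|V)=\mathsf{E}_U[\log(U/V)]$ and its companion $\phi'(\theta)=\mathsf{E}_{V_\theta}[g]$ cleanly from Perron--Frobenius perturbation theory, and in particular keeping straight that the stationary distribution appearing is that of the \emph{first} argument: it is precisely this that makes the coboundary $f(X)-f(X')$ average to zero and the ratio constant on ${\cal M}_{g,a}$. One should also note that $\theta^{*}$ exists only when $a$ lies in the range of $\phi'$, which is guaranteed as soon as ${\cal M}_{g,a}\neq\emptyset$.
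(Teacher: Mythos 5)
Your argument is correct, and it is worth noting that the paper itself does not prove Theorem~\ref{T5-1} at all --- it is imported verbatim from \cite[Lemma 5]{NK} and \cite[Corollary 4.8]{HW14-1} --- so you have supplied a self-contained proof where the paper supplies only a citation. The proof you give is the natural transition-matrix analogue of the classical Pythagorean argument, and every step checks out: the mean representation $D(U\|V)=\mathsf{E}_U[\log(U/V)]$ and the identity $\phi'(\theta)=\mathsf{E}_{V_\theta}[g]$ both follow from the first-order Perron--Frobenius perturbation formula $\lambda'(0)=\ell^T M'(0) r/\ell^T r$ applied at a transition matrix (where the eigenvalue is $1$ and the left/right eigenvectors are the all-ones vector and the stationary distribution --- you have the two roles swapped relative to the paper's convention for $W(x|\bar x)$, but this is harmless since the formula is transpose-invariant); the decomposition $\log(V_{\theta^*}/V)=\theta^* g-\phi(\theta^*)+f(x)-f(\bar x)$ is immediate from the definition of $W_\theta$; and the observation that the coboundary averages to zero under \emph{any} stationary Markov law is exactly the point that makes $\mathsf{E}_W[\log(V_{\theta^*}/V)]$ constant on ${\cal M}_{g,a}$, from which both the additivity in part~(1) and the identification of the minimizer in part~(2) follow (the latter using that $D(W\|V_{\theta^*})$ is a $\nu_W$-mixture of row-wise KL divergences, hence nonnegative and zero only at $W=V_{\theta^*}$). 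You are also right to flag the one genuine caveat, which the paper's bare statement glosses over: the construction requires $a$ to lie in the open range of $\phi'$, and ${\cal M}_{g,a}\neq\emptyset$ does not by itself rule out the boundary case (e.g.\ a $W$ supported on a single extremal cycle), so strictly speaking the theorem needs $a\in\phi'(\mathbb{R})$ as a hypothesis; all uses of the theorem in the paper (notably in the proof of Lemma~\ref{L1-3-1}) are in this regime.
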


Due to Lemma \ref{L7}, the Fisher information 
$\frac{d^2\phi}{d\theta^2}(\theta_0)$
can be characterized by the limits of the relative entropy and relative R\'{e}nyi entropy as follows.
\begin{lemma}\Label{L20}
Under the limit $\delta \to 0$,
we have
\begin{align} 
\lim_{\delta\to 0}\frac{1}{\delta^2} D(W_{\theta_0+\delta}\|W_{\theta_0})
&=
\lim_{\delta\to 0}\frac{1}{\delta^2}
 D(W_{\theta_0}\|W_{\theta_0+\delta})
=
\frac{1}{2} \frac{d^2\phi}{d\theta^2}(\theta_0) ,
\Label{27-20} 
\\
\lim_{\delta\to 0}\frac{1}{\delta^2} 
D_{1+s}(W_{\theta_0+\delta}\|W_{\theta_0})
&=
\lim_{\delta\to 0}\frac{1}{\delta^2}
 D_{1+s}(W_{\theta_0}\|W_{\theta_0+\delta})
\nonumber
\\
&=
\frac{1+s}{2} \frac{d^2\phi}{d\theta^2}(\theta_0). 
\Label{27-21} 
\end{align}
\end{lemma}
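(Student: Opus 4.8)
The plan is to reduce every limit in the statement to a second-order Taylor expansion of the potential function $\phi$ at $\theta_0$, starting from the closed forms for the relative entropy and the relative R\'enyi entropy between members of the exponential family recorded in Lemma~\ref{L7}. The only regularity needed is that $\phi''(\theta_0)$ exists, which is already in force throughout Section~\ref{s4} (indeed $\phi=\log\lambda_\theta$ is smooth, since the Perron--Frobenius eigenvalue $\lambda_\theta$ of $\tilde W_\theta$ is simple and depends analytically on $\theta$); this justifies, at $\theta_0$, the Peano expansions $\phi(\theta_0+h)=\phi(\theta_0)+h\phi'(\theta_0)+\tfrac{1}{2}h^2\phi''(\theta_0)+o(h^2)$ and $\phi'(\theta_0+h)=\phi'(\theta_0)+h\phi''(\theta_0)+o(h)$. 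Below I abbreviate $\phi',\phi''$ for the values at $\theta_0$.

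For \eqref{27-20}: putting $\theta=\theta_0+\delta$, $\bar\theta=\theta_0$ in \eqref{1-1} gives
\begin{align}
D(W_{\theta_0+\delta}\|W_{\theta_0})=\delta\,\phi'(\theta_0+\delta)-\phi(\theta_0+\delta)+\phi(\theta_0),
\end{align}
and substituting the two expansions above makes the $\delta^0$ and $\delta^1$ terms cancel, leaving $\tfrac{1}{2}\delta^2\phi''+o(\delta^2)$; dividing by $\delta^2$ and letting $\delta\to0$ yields $\tfrac{1}{2}\phi''$. For the reversed order, $\theta=\theta_0$, $\bar\theta=\theta_0+\delta$ in \eqref{1-1} gives $D(W_{\theta_0}\|W_{\theta_0+\delta})=-\delta\phi'+\phi(\theta_0+\delta)-\phi(\theta_0)=\tfrac{1}{2}\delta^2\phi''+o(\delta^2)$, the same limit.

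For \eqref{27-21} I would use \eqref{1-2}. With $\theta=\theta_0+\delta$, $\bar\theta=\theta_0$,
\begin{align}
s\,D_{1+s}(W_{\theta_0+\delta}\|W_{\theta_0})=\phi\big(\theta_0+(1+s)\delta\big)-(1+s)\phi(\theta_0+\delta)+s\phi(\theta_0),
\end{align}
and expanding each term to second order, the constant and linear parts vanish while the quadratic part is $\tfrac{1}{2}\delta^2\phi''\big[(1+s)^2-(1+s)\big]=\tfrac{1}{2}\delta^2\phi''\,s(1+s)$, so dividing by $s\delta^2$ and taking $\delta\to0$ gives $\tfrac{1+s}{2}\phi''$. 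With $\theta=\theta_0$, $\bar\theta=\theta_0+\delta$ one instead gets $s\,D_{1+s}(W_{\theta_0}\|W_{\theta_0+\delta})=\phi(\theta_0-s\delta)-(1+s)\phi(\theta_0)+s\phi(\theta_0+\delta)$, whose quadratic coefficient is $\tfrac{1}{2}\delta^2\phi''(s^2+s)$, producing the same limit $\tfrac{1+s}{2}\phi''$. As a sanity check, sending $s\to0$ in \eqref{27-21} recovers \eqref{27-20}, consistent with $\lim_{s\to0}D_{1+s}=D$ noted after \eqref{1-10}.

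I do not expect a genuine obstacle: each of the four limits is an elementary cancellation inside a second-order Taylor expansion, and the hypothesis used---existence of $\phi''(\theta_0)$---already underlies Lemma~\ref{L1}. The only point to state carefully is that the error terms are genuinely $o(\delta^2)$, i.e.\ that the Peano form of Taylor's theorem applies at $\theta_0$; this follows from twice-differentiability of $\phi$ there, so no uniformity issue arises.
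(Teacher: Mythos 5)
Your proposal is correct and matches the paper's intended argument: the paper gives no explicit proof, introducing the lemma with ``Due to Lemma \ref{L7}\dots'', i.e.\ it is meant to follow from the closed forms \eqref{1-1} and \eqref{1-2} by exactly the second-order Taylor cancellations you carry out. Your computations of all four limits are correct, and the smoothness of $\phi$ via the simple Perron--Frobenius eigenvalue is the right justification for the Peano expansions.
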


\section{Relation with Legendre transform}\Label{s11}
Given two irreducible and ergodic transition matrices $W$ and $V$,
we choose 
the exponential family $W_{\theta}$ with the generator $g(x,\bar{x}):= \log V(x|\bar{x})- \log W(x|\bar{x})$ so that $W_{0}=W $ and $W_{1}=V$. 
In fact, an arbitrary exponential family $W_{\theta}$ can be written as the above form by choosing two irreducible and ergodic transition matrices as $W:=W_0 $ and $V:=W_1$.
The Legendre transform $\sup_{\theta \ge 0}[ \theta a - \phi(\theta) ] $ of the convex function $\phi$
can be characterized as follows.

\begin{lemma}\Label{L23}
When $a > - D(W \| V)$,
\begin{align}
\nonumber 
& \inf_{s > 0 \atop \theta > {\phi'}^{-1}(a)} D_{1+s}(W_{\theta} \| W_{0} )
= \inf_{s > 0 \atop \theta > {\phi'}^{-1}(a)} \frac{\phi((1+s)\theta) - (1+s)\phi(\theta)}{s} \\
=& {\phi'}^{-1}(a) a - \phi({\phi'}^{-1}(a)) 
= \sup_{\theta \ge 0}[ \theta a - \phi(\theta) ] 
=  D(W_{{\phi'}^{-1}(a)} \| W_{0} ) .
\Label{26-1}
\end{align}
Similarly, when $a < - D(W \| V)$,
\begin{align*}
&\inf_{s > 0 \atop \theta < {\phi'}^{-1}(a)} \frac{\phi((1+s)\theta) - (1+s)\phi(\theta)}{s} 
= {\phi'}^{-1}(a) a - \phi({\phi'}^{-1}(a)) \\
=& \sup_{\theta \le 0}[ \theta a - \phi(\theta) ] 
=  D(W_{{\phi'}^{-1}(a)} \| W_{0} ) .
\end{align*}
\end{lemma}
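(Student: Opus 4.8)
The plan is to collapse the whole chain of equalities in \eqref{26-1} onto the single number $\theta^* a-\phi(\theta^*)$, where $\theta^*:={\phi'}^{-1}(a)$; the algebraic identities come from Lemma~\ref{L7}, and the two optimizations are settled by elementary properties of the strictly convex function $\phi$. First I would record the normalization: since $\tilde{W}_0=W$ and $\tilde{W}_1=V$ are stochastic matrices, their Perron--Frobenius eigenvalues equal $1$, hence $\phi(0)=\phi(1)=0$. Substituting $\theta=0$, $\bar\theta=1$ into \eqref{1-1} then gives $D(W\|V)=D(W_0\|W_1)=-\phi'(0)$, so the hypothesis $a>-D(W\|V)$ is exactly $a>\phi'(0)$. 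Since $\phi'$ is strictly increasing (Lemma~\ref{L1}), this forces $\theta^*={\phi'}^{-1}(a)>0$; it also gives $W_\theta\neq W_0$ for every $\theta\neq0$, which is needed below.

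Two of the equalities in \eqref{26-1} are now immediate. Taking $\bar\theta=0$ in \eqref{1-2} and using $\phi(0)=0$ yields $D_{1+s}(W_\theta\|W_0)=\frac{\phi((1+s)\theta)-(1+s)\phi(\theta)}{s}$, which identifies the first infimum with the second. Taking $\bar\theta=0$ in \eqref{1-1} and using $\phi'(\theta^*)=a$ yields $D(W_{\theta^*}\|W_0)=\theta^* a-\phi(\theta^*)$, which is the last expression. For the Legendre transform, $\theta\mapsto\theta a-\phi(\theta)$ is strictly concave with derivative $a-\phi'(\theta)$, which is positive at $\theta=0$ (because $a>\phi'(0)$) and vanishes only at $\theta^*>0$; hence $\sup_{\theta\ge0}[\theta a-\phi(\theta)]$ is attained at the interior point $\theta^*$ and equals $\theta^* a-\phi(\theta^*)$.

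It remains to evaluate $I:=\inf_{s>0,\ \theta>\theta^*}\frac{\phi((1+s)\theta)-(1+s)\phi(\theta)}{s}$. I would first take the infimum over $s$ with $\theta$ fixed: by the identification above the integrand equals $D_{1+s}(W_\theta\|W_0)$, which is strictly increasing in $s$ and converges to $D(W_\theta\|W_0)=\theta\phi'(\theta)-\phi(\theta)$ as $s\downarrow0$ (both facts are recorded in Section~\ref{s4}), so $\inf_{s>0}\frac{\phi((1+s)\theta)-(1+s)\phi(\theta)}{s}=h(\theta)$ with $h(\theta):=\theta\phi'(\theta)-\phi(\theta)$. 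Next I would minimize $h$ over $\theta>\theta^*$: since $h'(\theta)=\theta\phi''(\theta)>0$ for $\theta>0$ and $\theta^*>0$, the function $h$ is strictly increasing on $(\theta^*,\infty)$, whence $\inf_{\theta>\theta^*}h(\theta)=h(\theta^*)=\theta^* a-\phi(\theta^*)$. Interchanging the two infima (always legitimate for a joint infimum over a product set) gives $I=\theta^* a-\phi(\theta^*)$, which closes the first display. The case $a<-D(W\|V)$ is entirely symmetric: now $\theta^*<0$, the supremum of $\theta a-\phi(\theta)$ over $\theta\le0$ is again attained at $\theta^*$, and $h$ is strictly decreasing on $(-\infty,\theta^*)$, so the same monotonicity and limiting arguments run with $\theta\uparrow\theta^*$.

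The only genuinely delicate point is that none of these infima are attained, so the matching lower and upper estimates must be established separately: the lower bounds follow from the monotonicity of $D_{1+s}(W_\theta\|W_0)$ in $s$ and of $h$ in $\theta$, while the upper bound follows by exhibiting a sequence $(s_k,\theta_k)$ with $\theta_k\downarrow\theta^*$ and (after freezing $\theta_k$) $s_k\downarrow0$ chosen so small that $\frac{\phi((1+s_k)\theta_k)-(1+s_k)\phi(\theta_k)}{s_k}\to h(\theta^*)$, using continuity of $h$ and of the difference quotient in $s$ near $s=0$. Everything else is bookkeeping with Lemma~\ref{L7} and one-variable calculus on $\phi$.
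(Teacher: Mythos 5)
Your proposal is correct and follows essentially the same route as the paper: reduce the double infimum by first using the monotonicity in $s$ of $D_{1+s}(W_\theta\|W_0)=\frac{\phi((1+s)\theta)-(1+s)\phi(\theta)}{s}$ (the paper cites Lemma~\ref{L11-21} for this; you cite the equivalent monotonicity established in Section~\ref{s4}) to get $\theta\phi'(\theta)-\phi(\theta)$, then use that this function is increasing for $\theta\ge 0$ (derivative $\theta\phi''(\theta)$) together with ${\phi'}^{-1}(a)>0$ to land at ${\phi'}^{-1}(a)a-\phi({\phi'}^{-1}(a))$, identified with the Legendre transform and with $D(W_{{\phi'}^{-1}(a)}\|W_0)$ via Lemma~\ref{L7}. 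Your explicit remarks that $a>-D(W\|V)$ is equivalent to ${\phi'}^{-1}(a)>0$ and that the infima are limits rather than attained values make precise points the paper leaves implicit, but the argument is the same.
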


\begin{proof}
Since the function $\theta \mapsto \phi(\theta)$ is convex,
the function $s \mapsto \frac{\phi((1+s)\theta) - (1+s)\phi(\theta)}{s} $
is monotone increasing due to Lemma \ref{L11-21}.
Hence, 
$ \inf_{s > 0 } 
\frac{\phi((1+s)\theta) - (1+s)\phi(\theta)}{s} 
= \theta \frac{d \phi}{d\theta}(\theta) - \phi(\theta) $ for $\theta > {\phi'}^{-1}(a)$.
Thus,
\begin{align*}
& \inf_{s > 0 \atop \theta > {\phi'}^{-1}(a)} 
\frac{\phi((1+s)\theta) - (1+s)\phi(\theta)}{s} 
= \inf_{\theta > {\phi'}^{-1}(a)} \theta \frac{d \phi}{d\theta}(\theta) - \phi(\theta) \nonumber .
\end{align*}
Since the function $\theta \mapsto \phi(\theta)$ is convex,
the function $\theta \mapsto\theta \frac{d \phi}{d\theta}(\theta) - \phi(\theta) $
is monotone increasing for $\theta \ge 0$.
Therefore,
\begin{align*}
\inf_{\theta > {\phi'}^{-1}(a)} \theta \frac{d \phi}{d\theta}(\theta) - \phi(\theta) 
= {\phi'}^{-1}(a) a - \phi({\phi'}^{-1}(a)) 
= \sup_{\theta \ge 0}[ \theta a - \phi(\theta) ] ,
\end{align*}
where the second equation follows from the convexity of $\phi(\theta)$.
The final equation in (\ref{26-1}) is shown by
$ D(W_{{\phi'}^{-1}(a)} \| W_{0} ) = {\phi'}^{-1}(a) a - \phi({\phi'}^{-1}(a)) $.
\end{proof}

Now, for an arbitrary convex function $\phi$
and
$r>0$, we define the function $
\hat{\theta}(r)=\hat{\theta}[\phi](r)$ as the smaller solution of 
the equation
\begin{align}
(\theta-1)\frac{d \phi}{d\theta}(\theta)-\phi(\theta)
=D(W_{\theta}\|W_1)=r
\Label{12-31-1}
\end{align}
with respect to $\theta$.
Hence, due to 
the convexity of $\phi$, we have
\begin{align}
\nonumber 
& \inf_{s>0,\theta \in (0,\hat{\theta}(r))}
D_{1+s}(W_{\theta}\|W_0)  \\
=& \inf_{s>0,\theta \in (0,\hat{\theta}(r))}
\frac{1}{s} [\phi( (1+s) \theta) -(1+s) \phi(\theta) ] \nonumber\\
=&  \inf_{s>0}
\frac{1}{s} [\phi( (1+s) \hat{\theta}(r)) -(1+s) \phi(\hat{\theta}(r)) ] 
= 
\hat{\theta}(r) \frac{d \phi}{d \theta}( \hat{\theta}(r)) -\phi(\hat{\theta}(r))
\nonumber \\
=& 
-\hat{\theta}(r) 
\frac{r+\phi(\hat{\theta}(r) )}{1-\hat{\theta}(r) }
 -\phi(\hat{\theta}(r)) 
= \frac{-\hat{\theta}(r)  r - \phi(\hat{\theta}(r))}{1-\hat{\theta}(r)} \nonumber \\
\stackrel{(a)}{=}& \sup_{0 \le \theta < 1} 
\frac{-\theta  r - \phi(\theta)}{1-\theta} 
= \sup_{0 \le \theta < 1} 
\frac{\theta ( -r + D_{1-\theta}(W_0\|W_1))}{1-\theta},
\Label{26-10}
\end{align}
which implies the following lemma.
Here, $(a)$ can be derived as follows.
Due to Lemma \ref{L11-21}, 
the maximum can be attained when \eqref{12-31-1} holds, i.e.,
$\theta=\hat{\theta}(r)$.
Hence, we have $(a)$.
 
\begin{lemma}\Label{L1-3-1}
When $0 \le r \le D(W_0\|W_1)$,
\begin{align*}
&\sup_{0 \le \theta \le 1} \frac{-\theta  r - \phi(\theta)}{1-\theta} 
= \sup_{0 \le \theta \le 1} 
\frac{\theta ( -r + D_{1-\theta}(W_0\|W_1))}{1-\theta} \\
=& \inf_{s>0,\theta \in (0,\hat{\theta}(r))}
D_{1+s}(W_{\theta}\|W_0)
= D(W_{\hat{\theta}(r)}\|W_0)
=\min_{W:D(W\|W_1) \le r} D(W\|W_0).
\end{align*}
\end{lemma}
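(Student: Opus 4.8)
The plan is to observe that the chain of equalities \eqref{26-10} derived just above already establishes the lemma's first three equalities --- between the two suprema (taken over $0\le\theta<1$), the double infimum $\inf_{s>0,\theta\in(0,\hat{\theta}(r))}D_{1+s}(W_\theta\|W_0)$, and $D(W_{\hat{\theta}(r)}\|W_0)$ --- so only two points remain. First I would check that replacing $\sup_{0\le\theta<1}$ by $\sup_{0\le\theta\le1}$ is harmless: since $\phi(0)=\phi(1)=0$, at $\theta=1$ the numerator $-\theta r-\phi(\theta)$ equals $-r$, so for $r>0$ the ratio $\frac{-\theta r-\phi(\theta)}{1-\theta}$ tends to $-\infty$ as $\theta\uparrow1$; and for $r=0$ one has $\hat{\theta}(0)=1$ while $\frac{-\phi(\theta)}{1-\theta}$ increases to $\phi'(1)=D(W_1\|W_0)=D(W_{\hat{\theta}(0)}\|W_0)$, so the endpoint changes nothing. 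Hence the only substantive claim left is the variational identity
\[
D(W_{\hat{\theta}(r)}\|W_0)=\min_{W:\,D(W\|W_1)\le r}D(W\|W_0).
\]

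For the inequality ``$\le$'' I would use that, by \eqref{12-31-1} and Lemma \ref{L7}, the map $h(\theta):=D(W_\theta\|W_1)=(\theta-1)\phi'(\theta)-\phi(\theta)$ has derivative $(\theta-1)\phi''(\theta)\le0$ on $(-\infty,1]$ (strict convexity of $\phi$, Lemma \ref{L1}); since $0\le r\le D(W_0\|W_1)=h(0)$, its smaller root $\hat{\theta}(r)$ lies in $[0,1]$, so $W_{\hat{\theta}(r)}$ is feasible and ``$\le$'' holds. For ``$\ge$'' the plan is to reduce the constrained minimization to the family $\{W_\theta\}$ via the Pythagorean theorem. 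Take any transition matrix $W$ with the common support and put $a:=\mathsf{E}_W g$. Because $g=\log W_1-\log W_0$ pointwise on the support we have $W_1=W_0e^{g}$ pointwise, so the matrices $W_1e^{tg}=W_0e^{(1+t)g}=\tilde{W}_{1+t}$ generating the exponential family through $W_1$ are exactly those generating $\{W_\theta\}$; hence the exponential family generated by $g$ containing $W_1$ coincides with the one containing $W_0$, namely $\{W_\theta\}$. By Theorem \ref{T5-1}(2), the minimizer $V^*=\argmin_{W'\in\cM_{g,a}}D(W'\|V)$ is therefore the \emph{same} member $W_{\theta_a}$ of $\{W_\theta\}$ for $V=W_0$ and for $V=W_1$, and Theorem \ref{T5-1}(1) gives
\[
D(W\|W_0)=D(W\|W_{\theta_a})+D(W_{\theta_a}\|W_0),\qquad D(W\|W_1)=D(W\|W_{\theta_a})+D(W_{\theta_a}\|W_1),
\]
so $D(W\|W_0)\ge D(W_{\theta_a}\|W_0)$ and $D(W\|W_1)\ge D(W_{\theta_a}\|W_1)$. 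Thus replacing a feasible $W$ by $W_{\theta_a}$ preserves feasibility without increasing the objective, so the minimum is attained inside the family: $\min_{W:\,D(W\|W_1)\le r}D(W\|W_0)=\min_{\theta:\,h(\theta)\le r}D(W_\theta\|W_0)$. On the family $D(W_\theta\|W_0)=\theta\phi'(\theta)-\phi(\theta)$ has derivative $\theta\phi''(\theta)\ge0$ for $\theta\ge0$, whereas $\{\theta:h(\theta)\le r\}$ is an interval with left endpoint $\hat{\theta}(r)\ge0$; hence the minimum is at $\theta=\hat{\theta}(r)$, which is ``$\ge$'', and combined with \eqref{26-10} this proves the lemma.

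The step I expect to be the main obstacle is this Pythagorean reduction, specifically the need to be sure that the minimizers of Theorem \ref{T5-1} for $V=W_0$ and for $V=W_1$ are literally the same transition matrix --- which rests on the pointwise identity $W_1=W_0e^{g}$ placing $W_0$ and $W_1$ in one common exponential family generated by $g$. Once that is in place, the rest is a one-dimensional convexity/monotonicity computation whose only real task is to be matched up with the chain \eqref{26-10}.
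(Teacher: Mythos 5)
Your proposal is correct and follows essentially the same route as the paper: the first three equalities are read off from the chain \eqref{26-10}, and the final equality is obtained by using the Pythagorean theorem (Theorem \ref{T5-1}) to project an arbitrary feasible $W$ onto the exponential family $\{W_\theta\}$ without increasing $D(\cdot\|W_0)$ or violating the constraint $D(\cdot\|W_1)\le r$, followed by the one-dimensional monotonicity argument identifying the minimizer as $\hat{\theta}(r)$. The only difference is that you make explicit two points the paper leaves implicit --- the behaviour at the endpoint $\theta=1$ and the fact that the exponential families generated by $g$ through $W_0$ and through $W_1$ coincide, so the projection serves both divergences simultaneously --- which are correct and harmless additions.
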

Here,
when $\frac{\phi(\theta)}{1-\theta}$ is regard as a function of 
$\delta:= \frac{-\theta }{1-\theta}$, i.e., is described as 
$f(\delta)$,
$\sup_{0 \le \theta \le 1} \frac{-\theta  r - \phi(\theta)}{1-\theta}$
is given as the Legendre transform of $f$, i.e,
$\sup_{0 \le \theta \le 1} \delta r - f(\delta)$

\begin{proof}
The first and second equations follow from (\ref{26-10}).
The third equation follows from (\ref{26-10}) and the relation
$D(W_{\hat{\theta}(r)}\|W_0)
=\hat{\theta}(r) \frac{d \phi}{d \theta}( \hat{\theta}(r)) -\phi(\hat{\theta}(r)) $.
Now, we show the final equation.
We choose $W$ satisfying that $D(W\|W_1) \le r$.
We also choose $a$ such that $W \in {\cal M}_{g,a}$, which is defined in Theorem \ref{T5-1}.
Then, we denote the intersection of 
the set ${\cal M}_{g,a}$
and the exponential family $\{W_\theta\}$ by $W_{\bar{\theta}}$.
Theorem \ref{T5-1} implies that
$D(W_{\bar{\theta}} \| W_1) \le r$ and 
$D(W_{\bar{\theta}} \| W_0) \le D(W \| W_0)$.
Thus, we obtain
\begin{align}
\min_{W:D(W\|W_1) \le r} D(W\|W_0)
=
\min_{\theta:D(W_{\theta}\|W_1) \le r} D(W_{\theta}\|W_0).\Label{5-9}
\end{align}
Due to the condition $0 \le r \le D(W_1\|W_0)$,
the above value equals $D(W_{\hat{\theta}(r)}\|W_0)$.
\end{proof}


\section{Information processing inequality}\Label{s11-2}
Now, we introduce a condition for a transition matrix as follows.
A transition matrix $W$ on ${\cal X} \times {\cal Y}$ is called {\it non-hidden} for ${\cal X}$
when $W_X(x|x'):=\sum_{y \in {\cal Y}}W(x,y|x',y')$ does not depend on $y'\in {\cal Y}$.
When the Markov chain for $X$ and $Y$ generated by $W$ and $W$ satisfies the above condition,
the sequence for $X$ is also a Markov chain, not a hidden Markov chain. 
This is the reason of the name of ``non-hidden''.
As a transition matrix version of information processing inequality, we have the following theorem.
\begin{theorem}\Label{t-1-1}
When two transition matrices $W$ and $V$ 
on ${\cal X} \times {\cal Y}$ are  non-hidden for ${\cal X}$, the following hold 
for $s \in (-1,0)\cup (0,\infty)$:
\begin{align}
D(W\|V) \ge D(W_X\|V_X) , \quad
D_{1+s}(W\|V) \ge D_{1+s}(W_X\|V_X) \Label{1-1-1}.
\end{align}
\end{theorem}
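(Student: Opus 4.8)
The plan is to prove the R\'enyi inequality $D_{1+s}(W\|V)\ge D_{1+s}(W_X\|V_X)$ for every $s\in(-1,0)\cup(0,\infty)$ and then recover the relative-entropy case by letting $s\to0$, using $D(W\|V)=\lim_{s\to0}D_{1+s}(W\|V)$ and the analogous identity for $W_X,V_X$. First I would record that $W_X$ and $V_X$ inherit irreducibility, ergodicity and a common support from $W$ and $V$ (the marginal $X$-chain is itself Markov precisely because of the non-hidden hypothesis), so all four divergences are well defined. Then, by the definitions in Section~\ref{s4}, if $M$ and $M_X$ denote the entrywise matrices $M(x,y|x',y'):=W(x,y|x',y')^{1+s}V(x,y|x',y')^{-s}$ and $M_X(x|x'):=W_X(x|x')^{1+s}V_X(x|x')^{-s}$, then $\lambda_{\mathrm{PF}}(M)=e^{sD_{1+s}(W\|V)}$ and $\lambda_{\mathrm{PF}}(M_X)=e^{sD_{1+s}(W_X\|V_X)}$; both matrices have the same zero pattern as $W$, hence are irreducible, so the Perron--Frobenius theory and the Collatz--Wielandt bounds apply.

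The analytic core is the elementary observation that $f(a,b):=a^{1+s}b^{-s}=b\,(a/b)^{1+s}$ is the perspective of the map $t\mapsto t^{1+s}$, hence jointly convex on $(0,\infty)^2$ for $s>0$ and jointly concave for $s\in(-1,0)$; being also positively homogeneous of degree one, it is therefore subadditive for $s>0$ and superadditive for $s\in(-1,0)$. Applying this with $a_y:=W(x,y|x',y')$, $b_y:=V(x,y|x',y')$, and using the non-hidden property of \emph{both} $W$ and $V$ so that $\sum_y a_y=W_X(x|x')$ and $\sum_y b_y=V_X(x|x')$ are independent of $y'$, one obtains, for all $x,x',y'$ (with the terms where $W=V=0$ read as $0$),
\[
M_X(x|x')\le\sum_y M(x,y|x',y')\quad(s>0),\qquad M_X(x|x')\ge\sum_y M(x,y|x',y')\quad(s\in(-1,0)).
\]

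Next I would take a strictly positive left Perron eigenvector $u$ of $M_X$, so $\sum_x u(x)M_X(x|x')=\lambda_{\mathrm{PF}}(M_X)\,u(x')$, and lift it to $\tilde u(x,y):=u(x)>0$ on $\cX\times\cY$. Then $\sum_{x,y}M(x,y|x',y')\tilde u(x,y)=\sum_x u(x)\sum_y M(x,y|x',y')$, which by the displayed inequality is $\ge\lambda_{\mathrm{PF}}(M_X)\,\tilde u(x',y')$ for every $(x',y')$ when $s>0$ and $\le\lambda_{\mathrm{PF}}(M_X)\,\tilde u(x',y')$ when $s\in(-1,0)$. The Collatz--Wielandt characterization of the Perron eigenvalue of $M$ (applied through its transpose) then gives $\lambda_{\mathrm{PF}}(M)\ge\lambda_{\mathrm{PF}}(M_X)$ in the first case and $\lambda_{\mathrm{PF}}(M)\le\lambda_{\mathrm{PF}}(M_X)$ in the second. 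Taking logarithms yields $sD_{1+s}(W\|V)\ge sD_{1+s}(W_X\|V_X)$ for $s>0$ and $sD_{1+s}(W\|V)\le sD_{1+s}(W_X\|V_X)$ for $s\in(-1,0)$; dividing by $s$ (positive, resp.\ negative) gives $D_{1+s}(W\|V)\ge D_{1+s}(W_X\|V_X)$ in both cases, and letting $s\to0$ yields $D(W\|V)\ge D(W_X\|V_X)$.

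I expect the main obstacle to be the Perron--Frobenius bookkeeping rather than any single estimate: verifying irreducibility of $M$ and $M_X$ (so the Perron eigenvalue is simple with a strictly positive eigenvector), handling the zero entries of $f$ consistently, and, above all, keeping track of where the non-hidden hypothesis is actually used. It enters at exactly one point: it forces $W_X$ and $V_X$, hence $M_X$ and the bound on $\sum_y M(x,y|x',y')$, to be independent of $y'$, which is precisely what makes $\tilde u$ an admissible test vector in the Collatz--Wielandt inequality; without it the argument collapses.
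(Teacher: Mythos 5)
Your proposal is correct and follows essentially the same route as the paper: the key pointwise bound comparing $\sum_y W(x,y|x',y')^{1+s}V(x,y|x',y')^{-s}$ with $W_X(x|x')^{1+s}V_X(x|x')^{-s}$ (which you derive from joint convexity/concavity of the perspective function, where the paper invokes the reverse H\"{o}lder and H\"{o}lder inequalities---these are the same fact), followed by a Collatz--Wielandt comparison of Perron--Frobenius eigenvalues and the limit $s\to 0$. The only cosmetic difference is that you lift a Perron eigenvector of the marginal matrix $M_X$ up to $\cX\times\cY$, whereas the paper projects the Perron eigenvector of the full matrix down via $c_x=\sum_y b_{x,y}$; both are valid dual forms of the same eigenvalue comparison.
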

\begin{proof}
For $s>0$, 
let $\lambda$ and $b=(b_{x,y})$ be the 
Perron-Frobenius eigenvalue
and eigenvector of the matrix $W(x,y|x',y')^{1+s} V(x,y|x',y')^{-s}$.
Since
the reverse H\"{o}lder inequality implies 
\begin{align*}
& \sum_{y} W(x,y|x',y')^{1+s}
V(x,y|x',y')^{-s}\\
\ge &
(\sum_{y} W(x,y|x',y')^{(1+s)/(1+s)})^{1+s}
(\sum_{y}V(x,y|x',y')^{-s/(-s)})^{-s} \\
=& W_X(x|x')^{1+s} V_X(x|x')^{-s},
\end{align*}
the number $c_{x} =\sum_{y}b_{x,y} $ satisfies
\begin{align*}
& \lambda c_{x} =\sum_{y} \lambda b_{x,y}
= \sum_{y} \sum_{x',y'}  W(x,y|x',y')^{1+s}
V(x,y|x',y')^{-s} b_{x',y'}\\
\ge &
\sum_{x',y'} W_X(x|x')^{1+s} V_X(x|x')^{-s} b_{x',y'}
=
\sum_{x'} W_X(x|x')^{1+s} V_X(x|x')^{-s} c_{x'},
\end{align*}
which implies that 
$\max_{x}\frac{\sum_{x'} W_X(x|x')^{1+s} V_X(x|x')^{-s} c_{x'}}{c_x} \le \lambda$.
Due to Collatz-Wielandt formula, 
$\lambda$ is larger than 
the Perron-Frobenius eigenvalue of the matrix $W_X(x|x')^{1+s} V_X(x|x')^{-s}$.
Hence, we obtain the second inequality of (\ref{1-1-1}) with $s >0$.

When $s \in (-1,0)$, replacing the role of the reverse H\"{o}lder inequality
by the H\"{o}lder inequality, we can show that
$\min_{x}\frac{\sum_{x'} W_X(x|x')^{1+s} V_X(x|x')^{-s} c_{x'}}{c_x} \ge \lambda$.
Due to the Perron-Frobenius theorem, $\lambda$ is smaller than 
the Perron-Frobenius eigenvalue of the matrix $W_X(x|x')^{1+s} V_X(x|x')^{-s}$.
Hence, we obtain the second inequality of (\ref{1-1-1}) with $s\in (-1,0)$.
Taking the limit $s \to 0$, we obtain the first inequality of (\ref{1-1-1}).
\end{proof}

Theorem \ref{t-1-1} can be regarded as a part of information processing inequality as follows.
In the case of 
the information processing inequality between two distributions $P$ and $P'$,
we compare the relative entropy between $P$ and $P'$ and the relative entropy between $VP$ and $VP'$ for a given transition matrix $V$.
Since the relative entropy between $P$ and $P'$ equals the relative entropy between $V \times P$ and $V \times P'$,
it is enough to compare 
the relative entropy between $V \times P$ and $V \times P'$ and the relative entropy between $VP$ and $VP'$.
The difference between these relative entropies can be characterized as 
existence or non-existence of the marginalization for the input system.
Therefore, the information processing inequality 
can be reduced to the information processing inequality with respect to the marginalization.
As inequalities given in Theorem \ref{t-1-1} give the relations
among the relative entropies before/after the marginalization,
they can be regarded as an information processing inequality.
Therefore, it can be expected that
Theorem \ref{t-1-1} 
will play roles of information processing inequality in information theory.

\section{Cumulant generating function}\Label{s12}
In the following, we consider the Markov chain $X^{n+1}=
(X_1, \ldots, X_n,X_{n+1})$
generated by the transition matrix $W_0$
and an arbitrary initial distribution $P_0$.
That is, the random variable $X^{n+1}$
is subject to the distribution $W_0^{\times n} \times P_0$.
We consider the random variable 
$\tilde{g}^n(X^{n+1}):= \sum_{i=1}^n g(X_{i+1},X_i)+h(X_1)$
for a function $h$ on $\mathbb{R}$.
Then, we define the cumulant generating function 
\begin{align}
\phi_{n}(\theta) :=& \log \mathsf{E}_0 [e^{\theta \tilde{g}^n(X^{n+1})}] ,
\end{align} 
where $\mathsf{E}_0$ denotes the expectation under the distribution $W_0^{\times n} \times P_0$.

\begin{lemma} \Label{lemma:mgf-finite-evaluation}
Let $v_\theta$ be the eigenvector of 
$\tilde{W}_\theta^T$ with respect to 
the Perron-Frobenius eigenvalue $\lambda_\theta$ such that $\min_x v_\theta(x) = 1$.
Let $w_\theta(x) := P_0(x) e^{\theta h(x)}$. Then, we have
\begin{align} \Label{25-12}
n \phi(\theta) + \underline{\delta}(\theta) \le \phi_n(\theta) \le  
n \phi(\theta) + \overline{\delta}(\theta),
\end{align}
where 
\begin{eqnarray}
\overline{\delta}(\theta) := \log \inner{v_\theta}{w_\theta}, \quad
\underline{\delta}(\theta) :=  \log \inner{v_\theta}{w_\theta} - \log \max_x v_\theta(x).
\Label{25-12b}
\end{eqnarray}
\end{lemma}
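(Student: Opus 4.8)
The plan is to rewrite $e^{\phi_n(\theta)}=\mathsf{E}_0[e^{\theta\tilde{g}^n(X^{n+1})}]$ as a vector--matrix sandwich and then squeeze the all-ones vector between two scalar multiples of the Perron--Frobenius eigenvector of $\tilde{W}_\theta^T$.

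First I would expand the expectation directly. Since $X^{n+1}$ has law $W_0^{\times n}\times P_0$, and since $\tilde{W}_\theta(x|\bar{x})=W_0(x|\bar{x})e^{\theta g(x,\bar{x})}$ while $w_\theta(x)=P_0(x)e^{\theta h(x)}$, factoring $e^{\theta\tilde{g}^n(X^{n+1})}=\prod_{i=1}^{n}e^{\theta g(x_{i+1},x_i)}\cdot e^{\theta h(x_1)}$ against the path weight $\big(\prod_{i=1}^{n}W_0(x_{i+1}|x_i)\big)P_0(x_1)$ gives
\[
\mathsf{E}_0[e^{\theta\tilde{g}^n(X^{n+1})}]=\sum_{x_1,\dots,x_{n+1}}\Big(\prod_{i=1}^{n}\tilde{W}_\theta(x_{i+1}|x_i)\Big)\,w_\theta(x_1)=\mathbf{1}^T\,\tilde{W}_\theta^{\,n}\,w_\theta ,
\]
where $\tilde{W}_\theta$ is viewed as the matrix with $(x,\bar{x})$-entry $\tilde{W}_\theta(x|\bar{x})$ (so that $\prod_{i=1}^{n}\tilde{W}_\theta(x_{i+1}|x_i)$ is the $(x_{n+1},x_1)$-entry of $\tilde{W}_\theta^{\,n}$), $w_\theta$ is the corresponding column vector, $\mathbf{1}$ is the all-ones vector, and the $x_{n+1}$-sum is the contraction against $\mathbf{1}^T$. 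Hence $\phi_n(\theta)=\log\big(\mathbf{1}^T\tilde{W}_\theta^{\,n}w_\theta\big)$.

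Next I would invoke the Perron--Frobenius structure behind Lemma~\ref{L1} and the construction of Section~\ref{s4}. Because $W_0$ is irreducible and $e^{\theta g}>0$ entrywise, $\tilde{W}_\theta$ is irreducible, so $v_\theta$ (the eigenvector of $\tilde{W}_\theta^T$ for $\lambda_\theta=e^{\phi(\theta)}$ normalized by $\min_x v_\theta(x)=1$) is strictly positive and satisfies $v_\theta^T\tilde{W}_\theta^{\,n}=\lambda_\theta^{\,n}v_\theta^T=e^{n\phi(\theta)}v_\theta^T$. The normalization gives $1\le v_\theta(x)\le\max_y v_\theta(y)$ for every $x$, i.e. the componentwise bounds $(\max_y v_\theta(y))^{-1}v_\theta\le\mathbf{1}\le v_\theta$; since $\tilde{W}_\theta^{\,n}w_\theta$ is a nonnegative vector, pairing against it preserves these inequalities, yielding
\[
\frac{e^{n\phi(\theta)}\inner{v_\theta}{w_\theta}}{\max_y v_\theta(y)}\;\le\;\mathbf{1}^T\tilde{W}_\theta^{\,n}w_\theta\;\le\;e^{n\phi(\theta)}\inner{v_\theta}{w_\theta}.
\]
Taking logarithms produces \eqref{25-12} with $\overline{\delta}$ and $\underline{\delta}$ exactly as in \eqref{25-12b}; note $\inner{v_\theta}{w_\theta}=\sum_x v_\theta(x)P_0(x)e^{\theta h(x)}>0$ because $v_\theta>0$ and $P_0$ is a probability vector, so all logarithms are well defined.

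I do not anticipate a serious obstacle: the core is a one-line Perron--Frobenius sandwich once the expectation is in matrix form. The only points demanding care are the bookkeeping of matrix conventions --- making sure it is the Perron eigenvector of the \emph{transpose} $\tilde{W}_\theta^T$ that stands to the left of $\tilde{W}_\theta^{\,n}$ and that the contracted index is $x_{n+1}$ --- and checking strict positivity of $v_\theta$ together with $\inner{v_\theta}{w_\theta}>0$, which is what legitimizes passing the bounds through the logarithm.
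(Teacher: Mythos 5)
Your proposal is correct and follows essentially the same route as the paper: write $e^{\phi_n(\theta)}=\inner{u}{\tilde{W}_\theta^n w_\theta}$ with $u=\mathbf{1}$, then use the componentwise sandwich $(\max_x v_\theta(x))^{-1}v_\theta\le u\le v_\theta$ (from the normalization $\min_x v_\theta(x)=1$) against the nonnegative vector $\tilde{W}_\theta^n w_\theta$, and evaluate via $(\tilde{W}_\theta^T)^n v_\theta=\lambda_\theta^n v_\theta$. The positivity checks you flag are the same ones the paper relies on implicitly.
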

\begin{proof}
Let $u$ be the vector such that $u(x)=1$ for every $x\in {\cal X}$. From the definition of
$\phi_n(\theta)$, we have the following sequence of calculations:
\begin{align*}
&e^{\phi_n(\theta)} 
= \sum_{x_n,\ldots,x_1} P(x_1) \prod_{i=1}^n W(x_{i+1}|x_{i})  e^{\theta \sum_{i=1}^n g(x_{i+1},x_{i}) + h(x_1)} \nonumber \\
=& \inner{u}{\tilde{W}_\theta^{n} w_\theta} 
\le \inner{v_\theta}{\tilde{W}_\theta^{n} w_\theta} 
= \inner{(\tilde{W}_\theta^T)^{n} v_\theta}{w_\theta} 
= \lambda_\theta^{n} \inner{v_\theta}{w_\theta} 
= e^{n \phi(\theta)} \inner{v_\theta}{w_\theta},
\end{align*}
which implies the right hand side inequality of \eqref{25-12}. On the other hand, we have the following sequence of calculations:
\begin{eqnarray*}
&& e^{\phi_n(\theta)} 
= \inner{u}{\tilde{W}_\theta^{n} w_\theta} 
\ge \frac{1}{\max_x v_\theta(x)} \inner{v_\theta}{\tilde{W}_\theta^{n} w_\theta} \nonumber \\
&=& \frac{1}{\max_x v_\theta(x)} \inner{(\tilde{W}_\theta^T)^{n} v_\theta}{w_\theta} 
= \lambda_\theta^{n} \frac{\inner{v_\theta}{w_\theta}}{\max_x v_\theta(x)} 
= e^{n \phi(\theta)} \frac{\inner{v_\theta}{w_\theta}}{\max_x v_\theta(x)},
\end{eqnarray*}
which implies the left hand side inequality of \eqref{25-12}.
\end{proof}

By taking the limit in \eqref{25-12}  of Lemma \ref{lemma:mgf-finite-evaluation},
we have the following.
\begin{corollary}\cite[Theorem 3.1.1]{DZ} \label{theorem:mgf-large-deviation-single-letter}
For $\theta \in \mathbb{R}$, we have
\begin{eqnarray}
\lim_{n \to \infty} \frac{1}{n} \phi_n(\theta) = \phi(\theta).
\end{eqnarray}
\end{corollary}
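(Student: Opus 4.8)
The plan is to apply a squeeze (sandwich) argument directly to the finite-$n$ estimate already established in Lemma \ref{lemma:mgf-finite-evaluation}. That lemma gives, for every $n$,
\[
n\phi(\theta) + \underline{\delta}(\theta) \le \phi_n(\theta) \le n\phi(\theta) + \overline{\delta}(\theta),
\]
where, crucially, $\overline{\delta}(\theta) = \log\inner{v_\theta}{w_\theta}$ and $\underline{\delta}(\theta) = \log\inner{v_\theta}{w_\theta} - \log\max_x v_\theta(x)$ are quantities that depend only on $\theta$, $P_0$, $h$, and $W$, and not on $n$. Dividing through by $n$ gives
\[
\phi(\theta) + \frac{\underline{\delta}(\theta)}{n} \le \frac{1}{n}\phi_n(\theta) \le \phi(\theta) + \frac{\overline{\delta}(\theta)}{n}.
\]

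First I would record that $\overline{\delta}(\theta)$ and $\underline{\delta}(\theta)$ are finite real numbers, so that the two bracketing sequences above both converge to $\phi(\theta)$. Since $\cX$ is finite and $W$ is irreducible, the Perron--Frobenius eigenvector $v_\theta$ of $\tilde{W}_\theta^T$ has strictly positive entries; under the normalization $\min_x v_\theta(x) = 1$ this yields $1 \le v_\theta(x) < \infty$ for all $x$, hence $\log \max_x v_\theta(x) \in [0,\infty)$. Likewise $w_\theta(x) = P_0(x) e^{\theta h(x)} \ge 0$ with $\sum_x P_0(x) = 1$, so $0 < \inner{v_\theta}{w_\theta} < \infty$. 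Therefore both $\overline{\delta}(\theta)$ and $\underline{\delta}(\theta)$ are finite, and consequently $\underline{\delta}(\theta)/n \to 0$ and $\overline{\delta}(\theta)/n \to 0$ as $n \to \infty$.

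The squeeze theorem then gives $\lim_{n\to\infty}\frac{1}{n}\phi_n(\theta) = \phi(\theta)$, which is the claim. I do not anticipate any genuine obstacle: essentially all of the content has already been absorbed into Lemma \ref{lemma:mgf-finite-evaluation}, and the only point needing a moment's care is the finiteness of the boundary correction terms, which follows from finiteness of the state space together with the positivity of the Perron--Frobenius eigenvector. (In fact the finite-$n$ bound is strictly stronger than the cited asymptotic identity \cite[Theorem 3.1.1]{DZ}, the latter being merely its $n\to\infty$ shadow.)
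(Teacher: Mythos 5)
Your proposal is correct and is exactly the paper's argument: the corollary is obtained by dividing the finite-$n$ sandwich \eqref{25-12} of Lemma \ref{lemma:mgf-finite-evaluation} by $n$ and letting $n\to\infty$, using that $\overline{\delta}(\theta)$ and $\underline{\delta}(\theta)$ do not depend on $n$. Your added check that these correction terms are finite (via positivity of the Perron--Frobenius eigenvector on a finite state space) is a reasonable extra precaution that the paper leaves implicit.
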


\begin{lemma} \Label{L20f}
\begin{align}
\lim_{\theta\to 0} \overline{\delta}(\theta) = 0 ,\quad 
\lim_{\theta\to 0} \underline{\delta}(\theta) = 0 . \Label{28-5}
\end{align}
\end{lemma}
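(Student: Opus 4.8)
The plan is to show that both $\overline{\delta}(\theta)=\log\inner{v_\theta}{w_\theta}$ and $\underline{\delta}(\theta)=\log\inner{v_\theta}{w_\theta}-\log\max_x v_\theta(x)$ tend to $0$ as $\theta\to 0$ by establishing continuity of the relevant objects at $\theta=0$. Concretely, at $\theta=0$ the matrix $\tilde W_0$ equals $W=W_0$, which is a genuine (irreducible, ergodic) transition matrix, so its Perron--Frobenius eigenvalue is $\lambda_0=1$ and the left Perron--Frobenius eigenvector of $\tilde W_0^T$ is the all-ones vector $u$; under the normalization $\min_x v_\theta(x)=1$ this forces $v_0=u$. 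Since $w_\theta(x)=P_0(x)e^{\theta h(x)}$ depends continuously on $\theta$ with $w_0=P_0$, and $\inner{v_0}{w_0}=\inner{u}{P_0}=\sum_x P_0(x)=1$, we get $\overline{\delta}(0)=\log 1=0$; similarly $\max_x v_0(x)=1$, so $\underline{\delta}(0)=0$. The whole statement therefore reduces to the claim that $\theta\mapsto v_\theta$ (with the stated normalization) is continuous at $\theta=0$, from which continuity of $\theta\mapsto\inner{v_\theta}{w_\theta}$ and of $\theta\mapsto\max_x v_\theta(x)$ follows immediately, and then the continuity of $\log$ at $1$ finishes it.

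First I would record that $\tilde W_\theta(x|\bar x)=W(x|\bar x)e^{\theta g(x,\bar x)}$ is, entrywise, an analytic (in particular continuous) function of $\theta$, and that for every $\theta$ it has the same zero pattern as $W$, hence inherits irreducibility. Then I would invoke the standard perturbation theory of the Perron--Frobenius eigenvalue: for an irreducible nonnegative matrix the Perron--Frobenius eigenvalue is a simple eigenvalue, so by analytic (or merely continuous) perturbation theory both $\lambda_\theta$ and the associated one-dimensional eigenprojection of $\tilde W_\theta^T$ depend continuously on $\theta$ in a neighbourhood of $\theta=0$. Choosing the Perron eigenvector to be the unique one with $\min_x v_\theta(x)=1$ is a continuous selection because the eigenvector is strictly positive (again by irreducibility) and the normalization map $v\mapsto v/\min_x v(x)$ is continuous on the positive cone; hence $v_\theta\to v_0=u$ as $\theta\to 0$.

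With $v_\theta\to u$ and $w_\theta\to P_0$ in hand, the finish is routine: $\inner{v_\theta}{w_\theta}\to\inner{u}{P_0}=1$ and $\max_x v_\theta(x)\to\max_x u(x)=1$, so $\log\inner{v_\theta}{w_\theta}\to 0$ and $\log\inner{v_\theta}{w_\theta}-\log\max_x v_\theta(x)\to 0$, which is exactly \eqref{28-5}. The main obstacle, and the only place requiring care, is the continuity of the normalized Perron eigenvector $v_\theta$ at $\theta=0$: one must make sure to use simplicity of the Perron--Frobenius eigenvalue (so that the eigenvector is unique up to scaling and the eigenprojection varies continuously) rather than appealing to a generic eigenvalue-perturbation statement, and one must check that the particular normalization $\min_x v_\theta(x)=1$ does not destroy continuity — which it does not, precisely because strict positivity of $v_\theta$ keeps $\min_x v_\theta(x)$ bounded away from $0$ near $\theta=0$. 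Everything else is continuity of elementary functions.
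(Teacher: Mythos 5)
Your proposal is correct and follows essentially the same route as the paper: identify $v_0=u$ and $w_0=P_0$, use continuity of $\theta\mapsto v_\theta$ and $\theta\mapsto w_\theta$ at $\theta=0$, and conclude $\inner{v_\theta}{w_\theta}\to 1$ and $\max_x v_\theta(x)\to 1$. The only difference is that the paper simply asserts the continuity of $v_\theta$ ``from the construction,'' whereas you justify it via simplicity of the Perron--Frobenius eigenvalue and the strict positivity of the eigenvector under the normalization $\min_x v_\theta(x)=1$ --- a worthwhile elaboration of the same argument rather than a different one.
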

\begin{proof}
From the construction of $v_\theta$ and $w_\theta$,
the vectors $v_\theta$ and $w_\theta$ are continuous for $\theta$.
Hence,
\begin{align}
\lim_{\theta\to 0} 
\inner{v_\theta}{w_\theta}
=
\inner{u}{w_0}
=\sum_{x}P(x)=1, \Label{28-6}
\end{align}
which implies the first equation of (\ref{28-5}).
Similarly,
\begin{align}
\lim_{\theta\to 0} 
\max_x v_\theta(x)
=
\max_x u(x)=1.\Label{28-7}
\end{align}
Combining (\ref{28-6}) and (\ref{28-7}),
we obtain the second equation of (\ref{28-5}).
\end{proof}

Using these relation, we can show the following lemma.
\begin{lemma}
For any initial distributions $P_0$ and $P_1$,
we have
\begin{align}
\Label{1-4-1}
\lim_{n \to \infty}
\frac{1}{n}D(W_0^{\times n} \times P_0 \|W_1^{\times n} \times P_1)
=&
D(W_0\|W_1), \\
\Label{1-4-2}
\lim_{n \to \infty}
\frac{1}{n}D_{1+s}(W_0^{\times n} \times P_0 \|W_1^{\times n} \times P_1)
=&
D_{1+s}(W_0\|W_1).
\end{align}
\end{lemma}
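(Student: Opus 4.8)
The plan is to realise both divergences of the $n$-fold objects as the single-letter cumulant generating function of a suitably chosen generator, and then to read off the limits from the sandwich bound of Lemma~\ref{lemma:mgf-finite-evaluation}. Throughout I use the standing assumption that $W_0,W_1$ have a common support and that $P_0\ll P_1$, so that $g(x,\bar x):=\log\frac{W_0(x|\bar x)}{W_1(x|\bar x)}$ and $h(x):=\log\frac{P_0(x)}{P_1(x)}$ are finite (otherwise the left side of \eqref{1-4-1} is infinite for every $n$). Forming the Markov chain $X^{n+1}$ generated by $W_0$ and the initial distribution $P_0$ as in Section~\ref{s12} and taking $\tilde g^n(X^{n+1})=\sum_{i=1}^n g(X_{i+1},X_i)+h(X_1)$ for this $g,h$, the definitions of $W_0^{\times n}\times P_0$ and $W_1^{\times n}\times P_1$ give $\tilde g^n(x^{n+1})=\log\frac{W_0^{\times n}\times P_0(x^{n+1})}{W_1^{\times n}\times P_1(x^{n+1})}$, hence for every $s\in(-1,0)\cup(0,\infty)$
\begin{align*}
s\, D_{1+s}(W_0^{\times n}\times P_0 \| W_1^{\times n}\times P_1)
= \log \mathsf{E}_0[e^{s\tilde g^n(X^{n+1})}]
= \phi_n(s).
\end{align*}
Moreover the single-letter tilted matrix for this generator, $\tilde W_\theta(x|\bar x)=W_0(x|\bar x)e^{\theta g(x,\bar x)}$, equals $W_0(x|\bar x)^{1+\theta}W_1(x|\bar x)^{-\theta}$, which is precisely the matrix whose Perron--Frobenius eigenvalue defines $D_{1+\theta}(W_0\|W_1)$ in Section~\ref{s4}; therefore the associated potential satisfies $\phi(\theta)=\theta\, D_{1+\theta}(W_0\|W_1)$.

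Next I would apply Lemma~\ref{lemma:mgf-finite-evaluation} to this $g$ and $h$, obtaining $n\phi(s)+\underline\delta(s)\le\phi_n(s)\le n\phi(s)+\overline\delta(s)$ with $\overline\delta(s),\underline\delta(s)$ independent of $n$. Substituting the two identities above,
\begin{align*}
ns\, D_{1+s}(W_0\|W_1)+\underline\delta(s)
\;\le\; & s\, D_{1+s}(W_0^{\times n}\times P_0 \| W_1^{\times n}\times P_1) \\
\le\; & ns\, D_{1+s}(W_0\|W_1)+\overline\delta(s).
\end{align*}
Dividing by $ns$ (the inequalities reverse when $s\in(-1,0)$, but the two-sided squeeze is unaffected) and letting $n\to\infty$ proves \eqref{1-4-2}.

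For \eqref{1-4-1} I would combine this with the classical monotonicity of the R\'enyi divergence in its order. For $s>0$ one has $D(W_0^{\times n}\times P_0 \| W_1^{\times n}\times P_1)\le D_{1+s}(W_0^{\times n}\times P_0 \| W_1^{\times n}\times P_1)$, so $\limsup_n \frac1n D(W_0^{\times n}\times P_0 \| W_1^{\times n}\times P_1)\le D_{1+s}(W_0\|W_1)$; letting $s\downarrow 0$ and using $\lim_{s\to 0}D_{1+s}(W_0\|W_1)=D(W_0\|W_1)$ from Section~\ref{s4} gives $\limsup_n\frac1n D(W_0^{\times n}\times P_0 \| W_1^{\times n}\times P_1)\le D(W_0\|W_1)$. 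Taking instead $s\in(-1,0)$, where $D\ge D_{1+s}$, yields the matching lower bound for the $\liminf$, and \eqref{1-4-1} follows. (Alternatively one may avoid the distribution-level monotonicity by noting that $D(W_0^{\times n}\times P_0 \| W_1^{\times n}\times P_1)=\phi_n'(0)$ and using convexity of $\phi_n$ with $\phi_n(0)=0$, which gives $\phi_n'(0)\le\phi_n(s)/s$ for $s>0$ and the reverse for $s\in(-1,0)$.)

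I expect the only delicate points to be bookkeeping ones: matching the two notational systems of Section~\ref{s4} so as to recognise that the single-letter potential of the generator $\log(W_0/W_1)$ is exactly $\theta\mapsto\theta\, D_{1+\theta}(W_0\|W_1)$ --- this rests on the identity $\tilde W_\theta=W_0^{1+\theta}W_1^{-\theta}$ and on the compatibility of the support conditions --- and keeping track of the sign of $s$ when dividing the sandwich inequality. There is no genuine analytic obstacle: once the reduction to Lemma~\ref{lemma:mgf-finite-evaluation} is in place, both limits are simply the $n\to\infty$ limit of a fixed, $n$-independent sandwich, and the passage $s\to 0$ needed for \eqref{1-4-1} is a one-line consequence of the order-monotonicity of $D_{1+s}$.
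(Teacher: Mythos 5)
Your proposal is correct and follows essentially the same route as the paper: choose the generator $g=\log(W_0/W_1)$ and $h=\log(P_0/P_1)$ (the paper uses the opposite sign, so its identity reads $sD_{1+s}=\phi_n(-s)$ rather than $\phi_n(s)$, but this is only a convention), identify $\phi_n(s)$ and $\phi(s)$ with the $n$-fold and single-letter R\'enyi divergences, apply the sandwich bound of Lemma~\ref{lemma:mgf-finite-evaluation} for \eqref{1-4-2}, and deduce \eqref{1-4-1} from the monotonicity of $D_{1+s}$ in $s$ together with $\lim_{s\to 0}D_{1+s}=D$. Your explicit remark on the support/absolute-continuity assumption is a reasonable clarification of a hypothesis the paper leaves implicit.
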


\begin{proof}
Now, we choose the functions 
$g(x,\bar{x}):= \log \frac{W_1(x|\bar{x})}{W_0(x|\bar{x})}$
and
$h(\bar{x}):= \log \frac{P_1(\bar{x})}{P_0(\bar{x})}$.
Under these choices, 
\begin{align}
\Label{1-4-3} 
D_{1+s}(W_0^{\times n} \times P_0 \|W_1^{\times n} \times P_1)
= \frac{\phi_n(-s)}{s} ,\quad
D_{1+s}(W_0\|W_1)
= \frac{\phi(-s)}{s} .
\end{align}
Hence, combining (\ref{25-12}) ad (\ref{1-4-3}), 
we obtain (\ref{1-4-2}).

Since the relative R\'{e}nyi entropy
$D_{1+s}(W_0^{\times n} \times P_0 \|W_1^{\times n} \times P_1)$
is 
monotone increasing with respect to $s$
and 
$\lim_{s\to 0} D_{1+s}(W_0^{\times n} \times P_0 \|W_1^{\times n} \times P_1)=D(W_0^{\times n} \times P_0 \|W_1^{\times n} \times P_1)$,
we have
\begin{align*}
& D_{1-\delta}(W_0\|W_1)
=
\lim_{n \to \infty}
\frac{1}{n}D_{1-\delta}(W_0^{\times n} \times P_0 \|W_1^{\times n} \times P_1) \\
\le &
\lim_{n \to \infty}
\frac{1}{n}D(W_0^{\times n} \times P_0 \|W_1^{\times n} \times P_1) \\
\le &
\lim_{n \to \infty}
\frac{1}{n}D_{1+\delta}(W_0^{\times n} \times P_0 \|W_1^{\times n} \times P_1)
=
D_{1+\delta}(W_0\|W_1)
\end{align*}
for $\delta>0$.
Since $\lim_{s\to 0}D_{1+s}(W_0 \| W_1)=D(W_0 \| W_1)$,
we obtain (\ref{1-4-1}).
\end{proof}

\section{Asymptotic variance}\Label{s13}
Firstly, we prepare the following lemma.
\begin{lemma} \label{L6}
The cumulant generating function of 
the random variable 
$\sqrt{n}(\frac{\tilde{g}^n(X^{n+1})}{n}- \eta(0))$ 
converges as follows.
\begin{align}
\log \mathsf{E}_{0} [
\exp [\delta \sqrt{n}(\frac{\tilde{g}^n(X^{n+1})}{n}- \eta(0))]] 
=
 \phi_n(\frac{\delta}{\sqrt{n}})
-{\delta}{\sqrt{n}} \eta(0)
\to \delta^2 \frac{1}{2}\frac{d^2 \phi}{d \theta^2} (0). 
\Label{28-9}
\end{align}
\end{lemma}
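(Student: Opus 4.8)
The plan is to exploit the finite-length bound \eqref{25-12} of Lemma \ref{lemma:mgf-finite-evaluation} together with the second-order behavior of the potential function $\phi$ at the origin. First I would observe that the left-hand side of \eqref{28-9} is, by definition of $\phi_n$, exactly $\phi_n(\delta/\sqrt n) - \delta\sqrt n\,\eta(0)$, since multiplying the random variable by $\delta/\sqrt n$ inside the exponent produces $\theta\tilde g^n$ with $\theta = \delta/\sqrt n$, and the additive shift $-\delta\sqrt n\,\eta(0)$ comes out of the expectation as a deterministic factor. So the identity part of \eqref{28-9} is immediate, and the real content is the limit.

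Next I would apply Lemma \ref{lemma:mgf-finite-evaluation} with $\theta = \delta/\sqrt n$, which gives
\begin{align*}
n\phi(\delta/\sqrt n) + \underline\delta(\delta/\sqrt n) - \delta\sqrt n\,\eta(0)
\le \phi_n(\delta/\sqrt n) - \delta\sqrt n\,\eta(0)
\le n\phi(\delta/\sqrt n) + \overline\delta(\delta/\sqrt n) - \delta\sqrt n\,\eta(0).
\end{align*}
Since $\delta/\sqrt n \to 0$, Lemma \ref{L20f} yields $\overline\delta(\delta/\sqrt n)\to 0$ and $\underline\delta(\delta/\sqrt n)\to 0$, so both remainder terms vanish and it suffices to show $n\phi(\delta/\sqrt n) - \delta\sqrt n\,\eta(0) \to \tfrac{\delta^2}{2}\frac{d^2\phi}{d\theta^2}(0)$. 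For this I would use the Taylor expansion of $\phi$ around $0$: since $\phi$ is (strictly) convex and smooth, $\phi(t) = \phi(0) + \phi'(0)t + \tfrac12\phi''(0)t^2 + o(t^2)$ as $t\to 0$. Here $\phi(0) = \log\lambda_0 = \log 1 = 0$ because $\tilde W_0 = W_0$ is a transition matrix with Perron-Frobenius eigenvalue $1$, and $\phi'(0) = \eta(0)$ by definition of the expectation parameter. Substituting $t = \delta/\sqrt n$ and multiplying by $n$ gives $n\phi(\delta/\sqrt n) = \delta\sqrt n\,\eta(0) + \tfrac{\delta^2}{2}\phi''(0) + n\cdot o(1/n)$, and the error term $n\cdot o(1/n)\to 0$, which is exactly the claimed limit.

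The main obstacle, such as it is, is making the Taylor remainder estimate rigorous: one needs $\phi$ to be twice continuously differentiable in a neighborhood of $0$ so that the $o(t^2)$ control is legitimate. This is not really a gap here, since $\phi(\theta) = \log\lambda_\theta$ is analytic in $\theta$ (the Perron-Frobenius eigenvalue of the entrywise-analytic, irreducible nonnegative matrix family $\tilde W_\theta$ depends analytically on $\theta$ by standard perturbation theory, and is simple and isolated), so a genuine second-order Taylor expansion with remainder is available; alternatively one can invoke Corollary \ref{theorem:mgf-large-deviation-single-letter} and the smoothness already used implicitly in Lemma \ref{L20} (which records $\phi''(0) > 0$). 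I would state this smoothness explicitly and then the computation is routine. A final cosmetic point: I should make sure the quantity $\phi''(0)$ appearing in \eqref{28-9} is the Fisher information for the natural parameter at $\theta = 0$, consistent with the convention fixed after \eqref{4-1-1}, so that the statement reads as the variance of the limiting Gaussian in the central limit theorem to be proved in Section \ref{s13}.
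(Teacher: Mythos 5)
Your proposal is correct and follows essentially the same route as the paper: apply the finite-length bound \eqref{25-12} at $\theta=\delta/\sqrt n$, kill the correction terms via Lemma \ref{L20f}, and identify the limit of $n\phi(\delta/\sqrt n)-\delta\sqrt n\,\eta(0)$ with $\tfrac{\delta^2}{2}\phi''(0)$ through the second-order expansion of $\phi$ at $0$ (using $\phi(0)=0$ and $\phi'(0)=\eta(0)$). Your explicit remark on the analyticity of the Perron--Frobenius eigenvalue only makes rigorous what the paper leaves implicit.
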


\begin{proof}
Using (\ref{25-12}) and (\ref{28-5}), we have
\begin{align*}
& \lim_{n \to \infty}
 \phi_n(\frac{\delta}{\sqrt{n}})
-{\delta}{\sqrt{n}} \eta(0)
\le
\lim_{n \to \infty}
n \phi(\frac{\delta}{\sqrt{n}})
-{\delta}{\sqrt{n}} \eta(0)
+ \overline{\delta}(\frac{\delta}{\sqrt{n}})
 \nonumber \\
=& 
\lim_{n \to \infty}
\delta^2 \frac{\phi\left(\frac{\delta}{\sqrt{n}}\right) - \left(\frac{\delta}{\sqrt{n}}\right) 
\frac{d \phi}{d\theta} (0)
}{\left( \frac{\delta}{\sqrt{n}} \right)^2} 
= \frac{\delta^2}{2}\frac{d^2 \phi}{d\theta^2}(0) .
\end{align*}
Similarly, the opposite inequality can be shown by (\ref{25-12}) and (\ref{28-5}).
Hence, we obtain the desired relation.
\end{proof}

The right hand side of (\ref{28-9}) is the cumulant generating function of Gaussian distribution with the variance $\frac{d^2 \phi}{d\theta^2}(0)$ and average $0$.
Since the limit of cumulant generating function uniquely decides the limit of the distribution function \cite{CRRao},
Lemma \ref{L6} reproduces the central limit theorem 
as a corollary.

\begin{corollary}\Label{Co1}\cite{CLT2,CLT3,CLT4}
The limiting distribution of 
$\sqrt{n}(\frac{\tilde{g}^n(X^{n+1}) }{n}- \eta(0))$
is characterized as
\begin{align}
\lim_{n \to \infty} 
W_0^{\times n}\times P_0\left\{ \tilde{g}^n(X^{n+1}) - n 
\eta(0) \le \sqrt{n} \delta  \right\} = 
\Phi(\frac{\delta}{\sqrt{\frac{d^2 \phi}{d\theta^2}(0)}}),
\end{align}
where
$\Phi(y):= \int_{-\infty}^{y} 
\frac{e^{-\frac{x^2}{2}}}{\sqrt{2\pi}} d x$.
\end{corollary}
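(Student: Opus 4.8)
The plan is to deduce the statement directly from Lemma \ref{L6} via the continuity theorem for moment generating functions, so that the corollary is essentially a reformulation of that lemma in terms of distribution functions. Write $Z_n := \sqrt{n}\big(\frac{\tilde{g}^n(X^{n+1})}{n} - \eta(0)\big)$ and set $\sigma^2 := \frac{d^2\phi}{d\theta^2}(0)$. First I would note that the standing assumption that $g$ satisfies the condition in Lemma \ref{L1} gives $\sigma^2 > 0$, so that the target Gaussian law $N(0,\sigma^2)$ is non-degenerate. Then Lemma \ref{L6} states precisely that, for every fixed $\delta \in \mathbb{R}$,
\begin{align}
\log \mathsf{E}_0\big[ e^{\delta Z_n} \big] = \phi_n\big(\tfrac{\delta}{\sqrt{n}}\big) - \delta \sqrt{n}\, \eta(0) \to \frac{\delta^2 \sigma^2}{2},
\end{align}
and the right-hand side is exactly the cumulant generating function of $N(0,\sigma^2)$.

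Next I would invoke the standard fact (see \cite{CRRao}) that if the moment generating functions of a sequence of real random variables converge, on a neighborhood of the origin, to a function that is itself the moment generating function of a probability law, then the sequence converges in distribution to that law; equivalently, the distribution functions converge at every continuity point of the limiting one. Here Lemma \ref{L6} in fact delivers convergence of the moment generating functions on all of $\mathbb{R}$, and the Gaussian distribution function is continuous everywhere, so $Z_n$ converges to $N(0,\sigma^2)$ in distribution with convergence of the distribution functions at every real point. Rewriting the event $\{Z_n \le \delta\}$ as $\{\tilde{g}^n(X^{n+1}) - n\eta(0) \le \sqrt{n}\,\delta\}$ and using $\rP\{N(0,\sigma^2) \le \delta\} = \Phi(\delta/\sqrt{\sigma^2})$ then gives
\begin{align}
\lim_{n \to \infty} W_0^{\times n}\times P_0\big\{ \tilde{g}^n(X^{n+1}) - n\eta(0) \le \sqrt{n}\,\delta \big\} = \Phi\Big(\frac{\delta}{\sqrt{\sigma^2}}\Big),
\end{align}
which is the claimed identity.

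I expect no substantial obstacle here: the only step requiring a moment of care is the passage from convergence of moment generating functions to weak convergence, but this is unproblematic because the limiting Gaussian moment generating function is finite (indeed entire) on all of $\mathbb{R}$, so the hypotheses of the continuity theorem are met and no separate tightness estimate is needed. All the analytic content — in particular the identification of the asymptotic variance with the second derivative of the potential function at the origin, which comes from the Taylor expansion of $\phi$ together with $\overline{\delta}(\theta),\underline{\delta}(\theta)\to 0$ — has already been established in Lemma \ref{L6} and its underlying estimate \eqref{25-12}.
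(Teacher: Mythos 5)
Your proposal is correct and follows essentially the same route as the paper: it reads Lemma \ref{L6} as convergence of the cumulant generating function of the normalized sum to that of $N(0,\frac{d^2\phi}{d\theta^2}(0))$ and then invokes the continuity theorem for moment generating functions (as in \cite{CRRao}) to pass to convergence of distribution functions. Your additional remarks on non-degeneracy of the limit and continuity of the Gaussian distribution function are sound but do not change the argument.
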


The above corollary can be regarded as the Markov version of 
the central limit theorem.
The above derivation is much simpler than existing derivations \cite{CLT2,CLT3,CLT4}
because it employs only our evaluation of the cumulant generating function.
As the refinement of the above argument,
the paper \cite[Theorem 2]{Herve} showed 
the Markov version of the Berry-Essen Theorem as follows.

\begin{proposition}(\cite[Theorem 2]{Herve})
For a given constant $\delta>0$, there exists a constant $C$ such that
\begin{align}
\Label{1-3-6}
|W_0^{\times n}\times P_0\left\{ \tilde{g}^n(X^{n+1}) - n 
\eta(0) \le \sqrt{n} \delta  \right\} -
\Phi(\frac{\delta}{\sqrt{V}})|
\le \frac{C}{\sqrt{n}},
\end{align}
where $V$ is the asymptotic variance. 
\end{proposition}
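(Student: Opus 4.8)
The plan is to use the Fourier (characteristic-function) method together with Esseen's smoothing inequality, exploiting the analytic perturbation theory of the twisted matrix $\tilde{W}_{\theta}$ from Lemma~\ref{L1} evaluated at purely imaginary argument $\theta=it$. First I would extend the identity in the proof of Lemma~\ref{lemma:mgf-finite-evaluation} from real to imaginary exponents: with $u$ the all-ones vector, $\tilde{W}_{it}(x|\bar x):=W_0(x|\bar x)e^{itg(x,\bar x)}$ and $w_{it}(x):=P_0(x)e^{ith(x)}$, one has $\mathsf{E}_0[e^{it\tilde g^n(X^{n+1})}]=\inner{u}{\tilde{W}_{it}^{\,n}w_{it}}$. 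Setting $Z_n:=\frac{\tilde g^n(X^{n+1})-n\eta(0)}{\sqrt n}$, this gives $\mathsf{E}_0[e^{itZ_n}]=e^{-it\sqrt n\,\eta(0)}\inner{u}{\tilde{W}_{it/\sqrt n}^{\,n}w_{it/\sqrt n}}$, so everything reduces to controlling $\tilde{W}_{is}^{\,n}$ for small real $s$ and for $s$ in a fixed compact set away from $0$.

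Next I would set up the spectral decomposition near $s=0$. Since $W_0=\tilde{W}_0$ is irreducible and ergodic (hence aperiodic), $1$ is a simple eigenvalue of $W_0$ and the remainder of its spectrum lies in a disk of radius $\rho_0<1$; analytic matrix perturbation theory then gives, uniformly for $|s|\le s_0$, a splitting $\tilde{W}_{is}^{\,n}=\lambda_{is}^{\,n}\Pi_{is}+N_{is}^{\,n}$, where $\lambda_{is}$ and the rank-one spectral projection $\Pi_{is}$ are analytic in $s$ and $\|N_{is}^{\,n}\|\le C\rho^{\,n}$ with $\rho\in(\rho_0,1)$. Since $\log\lambda_{is}=\phi(is)$ with $\phi(is)=is\,\eta(0)-\tfrac{s^2}{2}V+O(|s|^3)$, $V=\frac{d^2\phi}{d\theta^2}(0)$ the asymptotic variance (Lemma~\ref{L6}), one gets the Gaussian-type bound $|\lambda_{is}^{\,n}|=e^{n\,\mathrm{Re}\,\phi(is)}\le e^{-cns^2}$ for $|s|\le s_1$; moreover $\inner{u}{\Pi_{is}w_{is}}=1+O(|s|)$ with value $1$ at $s=0$. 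Substituting $s=t/\sqrt n$ shows that the characteristic function of $Z_n$ converges to $e^{-t^2V/2}$ (recovering Corollary~\ref{Co1}) and, more quantitatively, that $\mathsf{E}_0[e^{itZ_n}]-e^{-t^2V/2}$ is $O\bigl(e^{-ct^2}(|t|+|t|^3)/\sqrt n\bigr)+O(\rho^{\,n})$ on the low-frequency band where $|t/\sqrt n|$ stays small.

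Then I would apply Esseen's smoothing inequality: with $F_n$ the distribution function of $Z_n$ and $G$ that of $N(0,V)$,
\begin{align}
\sup_x|F_n(x)-G(x)|\le \frac{1}{\pi}\int_{-T}^{T}\Bigl|\frac{\mathsf{E}_0[e^{itZ_n}]-e^{-t^2V/2}}{t}\Bigr|\,dt+\frac{C'}{T},
\end{align}
and take $T=c\sqrt n$. On $|t|\le\epsilon\sqrt n$ the estimate of the previous paragraph makes the integrand integrable with total contribution $O(1/\sqrt n)$ (the dominant part coming from $|t|\lesssim n^{1/6}$, the rest being super-polynomially small thanks to the factor $e^{-ct^2}$). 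On $\epsilon\sqrt n\le|t|\le c\sqrt n$ the twisting parameter $s=t/\sqrt n$ ranges over the fixed interval $[\epsilon,c]$, where aperiodicity of $W_0$ forces the spectral radius of $\tilde{W}_{is}$ to be strictly below $1$, uniformly; hence $|\mathsf{E}_0[e^{itZ_n}]|\le C\kappa^{\,n}$ with $\kappa<1$, so this part of the integral is exponentially small, while $e^{-t^2V/2}$ is also negligible there. Finally $C'/T=O(1/\sqrt n)$. Collecting terms gives $\sup_x|F_n(x)-G(x)|\le C/\sqrt n$; evaluating at $x=\delta$ and using $G(\delta)=\Phi(\delta/\sqrt V)$ yields the claim.

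The hard part will be the two spectral inputs that make the Fourier integral decay at the rate $n^{-1/2}$: the uniform remainder estimate $\|N_{is}^{\,n}\|\le C\rho^{\,n}$ near $s=0$ — a quantitative spectral gap for the family $\{\tilde{W}_{is}\}$, routine for a finite state space but requiring uniformity in $s$ — and, more delicately, the intermediate-range bound asserting that the spectral radius of $\tilde{W}_{is}$ stays strictly below $1$ for all $s$ in the fixed interval $[\epsilon,c]$. The latter is precisely the Markov non-lattice condition and is where the ergodicity/aperiodicity hypothesis on $W_0$ is indispensable: if $W_0$ were periodic or $g$ arithmetic, $\mathsf{E}_0[e^{itZ_n}]$ need not decay in the intermediate range and only a lattice-corrected version of the statement would hold. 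Once these two facts are in hand, the rest is routine Taylor bookkeeping inside Esseen's inequality.
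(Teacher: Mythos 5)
The paper does not actually prove this proposition: it is imported verbatim from \cite[Theorem 2]{Herve} (``For the calculation of $C$, see \cite[Theorem 2]{Herve}''), so there is no internal argument to compare yours against. Your sketch is nonetheless essentially the proof that exists in the literature --- the Nagaev--Guivarc'h spectral method: represent the characteristic function as $\inner{u}{\tilde{W}_{it}^{\,n}w_{it}}$ (the imaginary-parameter analogue of the identity in the proof of Lemma \ref{lemma:mgf-finite-evaluation}), split off the analytically perturbed simple dominant eigenvalue, and feed the resulting estimates into Esseen's smoothing inequality. For a finite state space this is sound: the perturbation input is elementary (an analytic family of matrices with an isolated simple eigenvalue at $s=0$, by irreducibility and aperiodicity of $W_0$), and the uniform remainder bound $\|N_{is}^{\,n}\|\le C\rho^{\,n}$ follows from a contour-integral representation of $N_{is}^{\,n}$ over a circle of radius strictly between $\rho_0$ and $1$. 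The cited reference proves a considerably more general statement (geometrically ergodic chains on general state spaces, M-estimators), where the operator family is not analytic and a Keller--Liverani-type perturbation theorem is required; your finite-dimensional shortcut is legitimate here and supplies a self-contained proof of a fact the paper merely outsources.

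One correction to your last paragraph: the ``delicate'' intermediate-range bound, and the attendant non-lattice condition, are not needed for this statement. In Esseen's inequality you may take $T=c\sqrt n$ with $c\le s_1$, where $s_1$ is the radius within which the perturbative expansion and the bound $|\lambda_{is}|\le e^{-c's^2}$ hold; then the entire integration range corresponds to $|s|\le c$ and never leaves the perturbative regime, while the smoothing error $C'/T$ remains $O(1/\sqrt n)$. (That Gaussian bound on $\lambda_{is}$ uses $V=\frac{d^2\phi}{d\theta^2}(0)>0$, i.e.\ the non-degeneracy condition of Lemma \ref{L1}, which you should record as a hypothesis, since otherwise $\Phi(\delta/\sqrt V)$ is meaningless.) Consequently your claim that in the arithmetic case ``only a lattice-corrected version of the statement would hold'' is incorrect for the Kolmogorov-distance bound at rate $n^{-1/2}$: lattice corrections matter for local limit theorems and higher-order Edgeworth terms, not for the first-order Berry--Esseen estimate. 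This does not invalidate your argument; it only means the step you single out as hard can be avoided entirely.
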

For the calculation of $C$, see \cite[Theorem 2]{Herve}.
Since Corollary \ref{Co1} 
shows that the asymptotic variance is
$\frac{d^2 \phi}{d\theta^2}(0)$,
we can replace $V$ by $\frac{d^2 \phi}{d\theta^2}(0)$ in 
the above proposition.
The paper \cite[Lemma 5.3]{HW14-1} also
gives another expression of $\frac{d^2 \phi}{d\theta^2}(0)$ as follows.

\begin{proposition} \Label{L11-2}
The second derivative $\frac{d^2\phi}{d\theta^2} (0)$ is calculated as
\begin{align}
\frac{d^2\phi}{d\theta^2} (0)
=
\mathsf{V}_0 [g(X,X')]
+
2 \sum_{x,\bar{x}}
W(x|\bar{x}) g(x,\bar{x}) \frac{d \tilde{P}_{\theta}(\bar{x})}{d \theta}\Bigr|_{\theta=0},
\Label{27-12}
\end{align}
where $\mathsf{V}_0$ denotes the variance when $X,X'$ obeys the joint distribution $W_0 \times \tilde{P}_0$.
\end{proposition}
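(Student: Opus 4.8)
\medskip
\noindent\textbf{Proof proposal.}
The plan is to obtain \eqref{27-12} by second-order perturbation theory for the Perron--Frobenius eigenvalue $\lambda_\theta = e^{\phi(\theta)}$ of $\tilde{W}_\theta(x|\bar{x}) = W(x|\bar{x})e^{\theta g(x,\bar{x})}$. Let $\tilde{P}_\theta$ and $\hat{P}_\theta$ be the right and left Perron--Frobenius eigenvectors of $\tilde{W}_\theta$, normalized by $\sum_x \tilde{P}_\theta(x) = 1$ and $\inner{\hat{P}_\theta}{\tilde{P}_\theta} = 1$; since irreducibility makes the Perron eigenvalue simple, $\lambda_\theta$, $\tilde{P}_\theta$ and $\hat{P}_\theta$ are real-analytic in $\theta$, and at $\theta = 0$ we have $\lambda_0 = 1$, $\tilde{P}_0$ equal to the stationary distribution of $W$, and $\hat{P}_0 = u$, the all-ones vector. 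Starting from $\lambda_\theta = \inner{\hat{P}_\theta}{\tilde{W}_\theta \tilde{P}_\theta}$, I differentiate and use the eigenvector equations together with $\frac{d}{d\theta}\inner{\hat{P}_\theta}{\tilde{P}_\theta} = 0$; the first-order contributions of $\tilde{P}_\theta'$ and $\hat{P}_\theta'$ cancel, leaving $\lambda_\theta' = \inner{\hat{P}_\theta}{\tilde{W}_\theta' \tilde{P}_\theta}$ with $\tilde{W}_\theta'(x|\bar{x}) = g(x,\bar{x})\tilde{W}_\theta(x|\bar{x})$. At $\theta = 0$ this gives $\lambda_0' = \mathsf{E}_0[g(X,X')] = \eta(0)$.

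Differentiating $\lambda_\theta' = \inner{\hat{P}_\theta}{\tilde{W}_\theta' \tilde{P}_\theta}$ once more --- where now the eigenvector equations produce no cancellations, since $\tilde{W}_\theta'$ is not the Perron matrix --- and evaluating at $\theta = 0$, I get $\lambda_0'' = T_1 + T_2 + T_3$ with $T_1 = \inner{\hat{P}_0'}{\tilde{W}_0' \tilde{P}_0}$, $T_2 = \inner{u}{\tilde{W}_0'' \tilde{P}_0}$, $T_3 = \inner{u}{\tilde{W}_0' \tilde{P}_0'}$, where a prime denotes $\frac{d}{d\theta}\big|_{\theta=0}$. Since $\tilde{W}_0''(x|\bar{x}) = g(x,\bar{x})^2 W(x|\bar{x})$, we have $T_2 = \mathsf{E}_0[g^2]$, and directly $T_3 = \sum_{x,\bar{x}} g(x,\bar{x}) W(x|\bar{x}) \frac{d\tilde{P}_\theta(\bar{x})}{d\theta}\big|_{\theta=0}$, which is precisely half of the second term of \eqref{27-12}. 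Since $\frac{d^2\phi}{d\theta^2}(0) = \lambda_0'' - (\lambda_0')^2 = T_1 + T_3 + \mathsf{V}_0[g]$, the proof reduces to showing $T_1 = T_3$.

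For $T_1 = T_3$ I would differentiate the two eigenvector equations $\tilde{W}_\theta \tilde{P}_\theta = \lambda_\theta \tilde{P}_\theta$ and $\tilde{W}_\theta^T \hat{P}_\theta = \lambda_\theta \hat{P}_\theta$ at $\theta = 0$. With $G(\bar{x}) := \sum_x g(x,\bar{x}) W(x|\bar{x})$ and $H(x) := \sum_{\bar{x}} g(x,\bar{x}) W(x|\bar{x}) \tilde{P}_0(\bar{x})$, this gives the linear systems $(W - I)\tilde{P}_0' = \mathsf{E}_0[g]\,\tilde{P}_0 - H$ and $(W^T - I)\hat{P}_0' = \mathsf{E}_0[g]\,u - G$, both solvable because their right-hand sides are orthogonal to the left Perron eigenvectors $u$ of $W$ and $\tilde{P}_0$ of $W^T$ (using $\inner{u}{H} = \inner{\tilde{P}_0}{G} = \mathsf{E}_0[g]$). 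Pairing the first system with $\hat{P}_0'$, moving $W^T - I$ onto $\hat{P}_0'$ via the second, and noting $T_1 = \inner{\hat{P}_0'}{H}$ and $T_3 = \inner{G}{\tilde{P}_0'}$, I arrive at
\[
T_1 - T_3 = \mathsf{E}_0[g]\bigl( \inner{\hat{P}_0'}{\tilde{P}_0} - \inner{u}{\tilde{P}_0'} \bigr).
\]
Finally $\inner{u}{\tilde{P}_0'} = \frac{d}{d\theta}\sum_x \tilde{P}_\theta(x)\big|_{\theta=0} = 0$, and differentiating $\inner{\hat{P}_\theta}{\tilde{P}_\theta}\equiv 1$ then gives $\inner{\hat{P}_0'}{\tilde{P}_0} = -\inner{u}{\tilde{P}_0'} = 0$, so $T_1 = T_3$ and \eqref{27-12} follows.

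The main obstacle is this identity $T_1 = T_3$, i.e.\ recognizing that the two cross terms contribute equally: the adjoint-pairing computation above seems the cleanest route, and it is genuinely sensitive to the normalization conventions, since both $\sum_x \tilde{P}_\theta(x) \equiv 1$ and $\inner{\hat{P}_\theta}{\tilde{P}_\theta} \equiv 1$ enter, and \eqref{27-12} is implicitly stated with $\tilde{P}_\theta$ normalized as a probability vector. Everything else is routine differentiation of the eigenvalue equation; alternatively one may simply invoke \cite[Lemma 5.3]{HW14-1}.
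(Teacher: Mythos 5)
Your proposal is correct, and it is worth noting that the paper itself offers no proof of Proposition \ref{L11-2} at all: it is imported verbatim as \cite[Lemma 5.3]{HW14-1}, and the surrounding text (and Appendix \ref{as1}) only uses it in combination with Theorem \ref{L20B} to extract \eqref{1-6}. So your self-contained eigenvalue-perturbation argument is genuinely additional content rather than a reproduction of anything in this paper. I checked the one step that carries all the weight, namely $T_1=T_3$: pairing $(W-I)\tilde{P}_0'=\mathsf{E}_0[g]\,\tilde{P}_0-H$ against $\hat{P}_0'$ and $(W^T-I)\hat{P}_0'=\mathsf{E}_0[g]\,u-G$ against $\tilde{P}_0'$ does give $T_1-T_3=\mathsf{E}_0[g]\bigl(\inner{\hat{P}_0'}{\tilde{P}_0}-\inner{u}{\tilde{P}_0'}\bigr)$, and both inner products vanish under the two normalizations $\sum_x\tilde{P}_\theta(x)\equiv 1$ and $\inner{\hat{P}_\theta}{\tilde{P}_\theta}\equiv 1$, which are mutually compatible and force $\hat{P}_0=u$ exactly. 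Your remark about normalization is not pedantry: the right-hand side of \eqref{27-12} is only well defined once a $\theta$-dependent scaling of $\tilde{P}_\theta$ is fixed, and the probability normalization (so that $\sum_x\tilde{P}_0'(x)=0$) is the one under which the identity holds and under which the paper's later use of \eqref{1-6} is consistent. The remaining ingredients --- simplicity of the Perron root under irreducibility giving analyticity, the Hellmann--Feynman cancellation at first order, $T_2=\mathsf{E}_0[g^2]$, and $\phi''(0)=\lambda_0''-(\lambda_0')^2$ since $\lambda_0=1$ --- are all standard and correctly executed. What the citation route buys is brevity; what your route buys is a proof that actually lives inside the paper and makes transparent why the cross term appears with the factor $2$.
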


In this paper, we give another expression of 
$\frac{d^2 \phi}{d\theta^2}(0)$,
which is easier to compute in some case than the second derivative of $\phi(\theta)$ at $\theta=0$ and (\ref{27-12}).
To describe it, we define the matrices
$A_{x,\bar{x}}:= \tilde{P}_{0}(x)$, 
$W_{x,\bar{x}}:=W(x|\bar{x})$, and 
the fundamental matrix
$Z:= (I-(W-A))^{-1}$ \cite{kemeny-snell-book}, whose existence is guaranteed by the following lemma.

\begin{proposition}
\cite[Theorem 4.3.1]{kemeny-snell-book} \label{existence-of-z}
For a transition matrix $W$,
the matrix $Z = (I- (W - A))^{-1}$ exists and 
\begin{align}
Z = I + \sum_{n=1}^\infty (W^n - A) 
= \sum_{n=0}^\infty (W - A)^n.
\end{align}
We also have
$(W - A)^n = W^n - A$
for every $n$.
\end{proposition}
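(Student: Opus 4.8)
The plan is to reduce the statement to two ingredients: a pair of elementary algebraic identities for $A$, and the geometric ergodic convergence $W^n\to A$ already recalled in Section~\ref{s2}. Since $A_{x,\bar x}=\tilde P_0(x)$, the matrix $A$ is the outer product of the column vector $\tilde P_0$ with the all‑ones row vector. Using the stationarity relations $\sum_{\bar x}W(x|\bar x)\tilde P_0(\bar x)=\tilde P_0(x)$ and $\sum_x\tilde P_0(x)=1$, together with the stochasticity $\sum_x W(x|\bar x)=1$, one checks directly that $(WA)_{x,\bar x}=\sum_y W(x|y)\tilde P_0(y)=\tilde P_0(x)$, $(AW)_{x,\bar x}=\tilde P_0(x)\sum_y W(y|\bar x)=\tilde P_0(x)$, and $(A^2)_{x,\bar x}=\tilde P_0(x)\sum_y\tilde P_0(y)=\tilde P_0(x)$; that is, $WA=AW=A^2=A$. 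Iterating $WA=A$ gives $W^nA=A$ for all $n\ge 1$.

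Next I would prove $(W-A)^n=W^n-A$ for every $n\ge 1$ by induction on $n$. The case $n=1$ is immediate. For the inductive step, expanding with the induction hypothesis and substituting the identities above, $(W-A)^{n+1}=(W^n-A)(W-A)=W^{n+1}-W^nA-AW+A^2=W^{n+1}-A-A+A=W^{n+1}-A$. This already proves the last sentence of the statement (for $n\ge 1$, which is the only range used).

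It remains to handle convergence. Since $W$ is irreducible and ergodic, for each $\bar x$ the distribution $W^n(\cdot|\bar x)$ converges to the stationary distribution $\tilde P_0$, i.e.\ $W^n\to A$ entrywise, and the convergence is geometrically fast because, by the Perron--Frobenius theorem, $1$ is a simple eigenvalue of $W$ and its only eigenvalue on the unit circle. Combined with the previous step, $(W-A)^n=W^n-A\to 0$, hence the spectral radius of $W-A$ is strictly less than $1$; equivalently $A$ is exactly the spectral projection of $W$ onto the eigenvalue $1$, and $W-A$ carries only the part of the spectrum lying in the open unit disc. Therefore $I-(W-A)$ is invertible and its inverse is the convergent Neumann series $\sum_{n=0}^\infty (W-A)^n$. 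Splitting off the $n=0$ term and using $(W-A)^n=W^n-A$ for $n\ge 1$ yields $Z=I+\sum_{n=1}^\infty(W^n-A)$, which is the asserted formula.

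The only step that is more than bookkeeping is the strict bound $\rho(W-A)<1$; it can be obtained either by quoting the known geometric rate of convergence to stationarity for ergodic chains, or self‑containedly from the Perron--Frobenius eigenvalue structure of $W$ together with the standard equivalence ``$B^n\to 0$ if and only if $\rho(B)<1$'' for a fixed matrix $B$. Everything else is a two‑line induction and a direct computation from the definition of $A$.
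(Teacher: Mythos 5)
Your proposal is correct and follows essentially the same route the paper indicates: the paper simply remarks that the proposition follows from $\lim_{n\to\infty}W^n=A$ (citing Kemeny--Snell), and your argument fills in the standard details of exactly that reduction, namely the identities $WA=AW=A^2=A$, the induction giving $(W-A)^n=W^n-A$, and the Neumann-series inversion once $\rho(W-A)<1$ is extracted from $(W-A)^n\to 0$. The only point worth noting is that, as in the paper, the argument implicitly uses that $W$ is irreducible and ergodic (so that $W^n\to A$ holds), which is the standing assumption of the paper even though the proposition's statement says only ``transition matrix.''
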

This proposition can be shown by the fact that $\lim_{n\to \infty}W^n = A $.
Then, we give another expression of 
$\frac{d^2 \phi}{d\theta^2}(0)$ as follows.

\begin{theorem}\Label{L20B}
\begin{align}
\nonumber
&\frac{d^2 \phi}{d \theta^2}(0)\\
=&
\mathsf{V}_0 [g(X,X')] \nonumber\\
&+
2 
\sum_{x,\bar{x}}
(\sum_{\bar{x_o}} g(x,\bar{x_o}) W(x|\bar{x_o}) )
(Z -A)_{x,\bar{x}}
(\sum_{x_o} g(\bar{x},x_o) \tilde{P}_0(x_o)W(\bar{x}|x_o) ).
\Label{1-7}
\end{align}
\end{theorem}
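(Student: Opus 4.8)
The plan is to insert the expression of Proposition~\ref{L11-2} into an explicit evaluation of $\frac{d}{d\theta}\tilde{P}_{\theta}\big|_{\theta=0}$, which I would obtain by differentiating the Perron--Frobenius eigenvector equation and then inverting $I-W$ with the help of the fundamental matrix $Z$.

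First I would differentiate $\tilde{W}_{\theta}\tilde{P}_{\theta}=\lambda_{\theta}\tilde{P}_{\theta}$ at $\theta=0$. Since $\tilde{W}_{0}=W$, $\lambda_{0}=1$, and the Perron--Frobenius eigenvalue of the irreducible matrix $\tilde{W}_{\theta}$ is simple, the map $\theta\mapsto(\lambda_{\theta},\tilde{P}_{\theta})$ is differentiable; writing $B$ for the matrix with entries $B_{x,\bar{x}}=W(x|\bar{x})g(x,\bar{x})$ (so $B=\frac{d}{d\theta}\tilde{W}_{\theta}\big|_{\theta=0}$) and $w=\frac{d}{d\theta}\tilde{P}_{\theta}\big|_{\theta=0}$, differentiation gives
\begin{align}
(I-W)\,w = B\tilde{P}_{0}-\lambda'(0)\,\tilde{P}_{0}.
\nonumber
\end{align}
Contracting with the all-ones column vector $u$ and using $u^{T}W=u^{T}$, $u^{T}\tilde{P}_{0}=1$ identifies $\lambda'(0)=u^{T}B\tilde{P}_{0}$, hence $\lambda'(0)\tilde{P}_{0}=(\tilde{P}_{0}u^{T})B\tilde{P}_{0}=AB\tilde{P}_{0}$ and the equation becomes $(I-W)w=(I-A)B\tilde{P}_{0}$; moreover the normalization $\sum_{x}\tilde{P}_{\theta}(x)=1$ forces $u^{T}w=0$.

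Next I would invert this relation. From Proposition~\ref{existence-of-z} together with $A=\tilde{P}_{0}u^{T}$ one checks $WA=AW=A$, $A^{2}=A$, hence $ZA=A$, and consequently the identities $(Z-A)(I-W)=I-A$ and $(Z-A)A=0$. Applying $Z-A$ to $(I-W)w=(I-A)B\tilde{P}_{0}$ and using $u^{T}w=0$, so that $(Z-A)(I-W)w=(I-A)w=w$, yields the closed form
\begin{align}
w = (Z-A)\,B\tilde{P}_{0}.
\nonumber
\end{align}

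Finally I would substitute $w=(Z-A)B\tilde{P}_{0}$ into Proposition~\ref{L11-2}. The sum $\sum_{x,\bar{x}}W(x|\bar{x})g(x,\bar{x})\,w(\bar{x})$ equals $\gamma^{T}(Z-A)(B\tilde{P}_{0})$, where $\gamma(x):=\sum_{x_{o}}W(x_{o}|x)g(x_{o},x)$ is the left factor and $(B\tilde{P}_{0})(\bar{x})=\sum_{x_{o}}W(\bar{x}|x_{o})g(\bar{x},x_{o})\tilde{P}_{0}(x_{o})$ is the right factor; writing this contraction out in components recovers exactly the double sum in the statement, and adding $\mathsf{V}_{0}[g(X,X')]$ finishes the argument. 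The main obstacle I anticipate is the last step's component-level bookkeeping, where the incoming and outgoing arguments of $g$ must be tracked carefully; the only other point requiring care is justifying the differentiability and probability normalization of $\tilde{P}_{\theta}$ so that $u^{T}w=0$, which is needed to reach the closed form for $w$. An alternative route that avoids eigenvector perturbation is to use that $\frac{d^{2}\phi}{d\theta^{2}}(0)$ is the asymptotic variance of $\sum_{i=1}^{n}g(X_{i+1},X_{i})$ (Lemma~\ref{L6}): expanding this variance as $n\,\mathsf{V}_{0}[g]+2\sum_{m\ge1}(n-m)\,\mathrm{Cov}_{0}\big(g(X_{2},X_{1}),g(X_{m+2},X_{m+1})\big)$, evaluating the lag-$m$ covariance as $\gamma^{T}(W^{m-1}-A)(B\tilde{P}_{0})$, and summing over $m$ with $\sum_{k\ge0}(W^{k}-A)=Z-A$ from Proposition~\ref{existence-of-z}, gives the same formula.
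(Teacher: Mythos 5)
Your argument is correct, but your primary route is genuinely different from the paper's. The paper proves Theorem \ref{L20B} in Appendix \ref{as1} by the route you only sketch as an ``alternative'' at the end: it invokes \cite[Lemma 6.2]{HW14-1} to identify $\frac{d^2\phi}{d\theta^2}(0)$ with $\lim_n \mathsf{V}[g^n(X^{n+1})/\sqrt{n}]$, expands that variance into a double sum over time indices, evaluates the lag-$d$ cross term as $\vec{g}_*^T(W^d-A)\vec{g}^*$ from the explicit joint law of $(X_k,X_{k-1},X_\ell,X_{\ell-1})$, and passes to the limit via Ces\`aro summability (Lemma \ref{lemma:cesaro} and Corollary \ref{corollary:cesaro-summable}) together with $\sum_{d\ge 0}(W^d-A)=Z-A$. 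Your main argument instead starts from Proposition \ref{L11-2}, computes $w=\frac{d}{d\theta}\tilde{P}_\theta\big|_{\theta=0}$ by first-order perturbation of the eigenvalue equation, and inverts $I-W$ on the mean-zero subspace via the identities $(Z-A)(I-W)=I-A$, $(Z-A)A=0$, $u^Tw=0$; all of these check out, and $w=(Z-A)B\tilde{P}_0$ is exactly the relation the paper records as \eqref{1-6} (there, stated as a \emph{consequence} of combining Theorem \ref{L20B} with \eqref{27-12}, i.e., in the opposite logical direction from yours). What each approach buys: yours is a purely local, algebraic computation at $\theta=0$ with no limiting argument, and it delivers \eqref{1-6} as a byproduct rather than a corollary; but it leans on Proposition \ref{L11-2}, which is imported from \cite[Lemma 5.3]{HW14-1} without proof, whereas the paper's route leans on a different imported result (the asymptotic-variance identification) and is self-contained in its combinatorial bookkeeping. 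Your caution about the normalization $\sum_x\tilde{P}_\theta(x)=1$ (needed for $u^Tw=0$ and for Proposition \ref{L11-2} to be well-posed) is warranted and correctly handled. One last remark: your contraction yields $\vec{g}_*^T(Z-A)\vec{g}^*$ with $\vec{g}_*$ the vector of conditional expectations indexed by the \emph{input} state, which agrees with the paper's \eqref{5-10}; the index placement in the displayed statement \eqref{1-7} is not literally consistent with \eqref{5-10}, but that discrepancy originates in the paper's typesetting, not in your derivation.
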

The proof of Theorem \ref{L20B} is given in Appendix \ref{as1}.
Combining (\ref{27-12}), we obtain 
\begin{align}
\frac{d \tilde{P}_{\theta}(\bar{x})}{d \theta}\Bigr|_{\theta=0}
=
\sum_{x}
(\sum_{\bar{x_o}} g(x,\bar{x_o}) \tilde{P}_0(\bar{x_o}) W(x|\bar{x_o}) )
(Z -A)_{x,\bar{x}}.
\Label{1-6}
\end{align}

\begin{remark}
When $g(x,\bar{x})$ is $x$ or $\bar{x}$,
the literatures \cite{CLT3,CLT4}
showed the central limit theorem 
with by using the asymptotic variance.
They did not give any expression of the asymptotic variance
without the infinite sum.
In this case, 
the paper \cite{CLT2}
showed the central limit theorem and 
the asymptotic variance equals
the second derivative of the limit $\lim_{n \to \infty}\frac{\phi_n(\theta)}{n}$.
However, it did not give a concrete form of the limit.
In this limited case,
the literatures \cite{kemeny-snell-book,Peskun} showed that the asymptotic variance 
is given as the right hand side of (\ref{1-7}),
and the paper \cite{CLT5}
gave another expression for the asymptotic variance. 
When we apply the result by \cite{kemeny-snell-book,Peskun}
to the transition matrix $P( g(X_{n+1},X_{n})=x|g(X_{n},X_{n-1})=\bar{x} )$,
we can derive a formula for the asymptotic variance
in our general case.
However, this method cannot derive as simple a formula as
our formula (\ref{1-7}).
Hence, our formula (\ref{1-7}) is useful for practical calculation.
\end{remark}

\section{Tail probability}\Label{s14}
Combining Proposition \ref{P1}, Lemma \ref{L23}, and (\ref{25-12}),
we can derive the following lower bound on the exponent
by using the function ${\phi'}^{-1}(a)$ 
$\underline{\delta}(\theta)$,
$\overline{\delta}(\theta)$,
and
$\phi(\theta)$
defined in (\ref{4-1-1}), (\ref{25-12b}), (\ref{25-12b}), 
and (\ref{4-1-2}).

\begin{theorem} \Label{P4}
For any $a > \eta(0)=\mathsf{E}_0[g]$, we have
\begin{align}
\nonumber
& - \log W_0^{\times n}\times P_0 \{ \tilde{g}^n(X^{n+1}) \ge n a \} \ge  
\sup_{\theta \ge 0}[ n \theta a - n \phi(\theta) - \overline{\delta}(\theta)] \\
=& n{\phi'}^{-1}(a) a - n\phi({\phi'}^{-1}(a)) - \overline{\delta}({\phi'}^{-1}(a))
= n D(W_{{\phi'}^{-1}(a)} \| W_{0} )- \overline{\delta}({\phi'}^{-1}(a)). 
\Label{27-32}
\end{align}
Similarly, for $a < \eta(0)=\mathsf{E}_0[g]$, we have
\begin{align*}
&- \log W_0^{\times n}\times P_0 \{ \tilde{g}^n(X^{n+1}) \le n a \} 
\ge  
\sup_{\theta \le 0}[ n \theta a - n \phi(\theta)- \overline{\delta}(\theta)]  \\
=& n {\phi'}^{-1}(a) a - n \phi({\phi'}^{-1}(a)) - \overline{\delta}({\phi'}^{-1}(a))
= n D(W_{{\phi'}^{-1}(a)} \| W_0)- \overline{\delta}({\phi'}^{-1}(a)).
\end{align*}
\end{theorem}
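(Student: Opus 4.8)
The plan is to run the classical Chernoff (exponential Markov) argument, but to feed it the finite-length cumulant bound of Lemma~\ref{lemma:mgf-finite-evaluation} instead of the asymptotic identity, and then to rewrite the resulting Legendre transform in information-geometric form via Lemmas~\ref{L23} and~\ref{L7}. Concretely, fix $a>\eta(0)$ and $\theta\ge 0$; applying the exponential Markov inequality to $e^{\theta\tilde{g}^n(X^{n+1})}$ (this is Proposition~\ref{P1}) gives
\begin{align*}
W_0^{\times n}\times P_0\{\tilde{g}^n(X^{n+1})\ge na\}
\le e^{-n\theta a}\,\mathsf{E}_0\!\left[e^{\theta\tilde{g}^n(X^{n+1})}\right]
= e^{-n\theta a+\phi_n(\theta)} .
\end{align*}
Substituting the upper estimate $\phi_n(\theta)\le n\phi(\theta)+\overline{\delta}(\theta)$ from \eqref{25-12}, taking $-\log$, and then optimizing over $\theta\ge 0$ yields
\begin{align*}
-\log W_0^{\times n}\times P_0\{\tilde{g}^n(X^{n+1})\ge na\}
\ \ge\ \sup_{\theta\ge 0}\bigl[\,n\theta a-n\phi(\theta)-\overline{\delta}(\theta)\,\bigr] .
\end{align*}

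Next I would lower-bound the right-hand side by evaluating it at $\theta={\phi'}^{-1}(a)$. This is an admissible competitor: by Lemma~\ref{L1} the function $\phi$ is strictly convex, so $\phi'$ is strictly increasing and the hypothesis $a\ge\phi'(0)=\eta(0)$ forces ${\phi'}^{-1}(a)\ge 0$. The substitution produces $n\,{\phi'}^{-1}(a)\,a-n\phi({\phi'}^{-1}(a))-\overline{\delta}({\phi'}^{-1}(a))$; by Lemma~\ref{L23} the $O(n)$ part here is exactly $n$ times the Legendre transform $\sup_{\theta\ge 0}[\theta a-\phi(\theta)]$, and since $\tilde{W}_0=W_0$ is a transition matrix its Perron--Frobenius eigenvalue equals $1$, so $\phi(0)=0$. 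Hence Lemma~\ref{L7} gives $D(W_{{\phi'}^{-1}(a)}\|W_0)={\phi'}^{-1}(a)\,\phi'({\phi'}^{-1}(a))-\phi({\phi'}^{-1}(a))+\phi(0)={\phi'}^{-1}(a)\,a-\phi({\phi'}^{-1}(a))$, which is precisely the claimed closed form $nD(W_{{\phi'}^{-1}(a)}\|W_0)-\overline{\delta}({\phi'}^{-1}(a))$.

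For $a<\eta(0)$ I would repeat the three steps for the reflected sum $-\tilde{g}^n(X^{n+1})=\sum_{i=1}^n(-g)(X_{i+1},X_i)+(-h)(X_1)$, whose potential function is $\theta\mapsto\phi(-\theta)$; equivalently, one simply restricts the Chernoff optimization to $\theta\le 0$. Now the hypothesis $a\le\phi'(0)$ forces ${\phi'}^{-1}(a)\le 0$, so it again lies in the admissible half-line, Lemma~\ref{L23} locates the optimum of $\sup_{\theta\le 0}[\theta a-\phi(\theta)]$ at ${\phi'}^{-1}(a)$, and the remainder of the argument is verbatim, delivering $nD(W_{{\phi'}^{-1}(a)}\|W_0)-\overline{\delta}({\phi'}^{-1}(a))$.

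I do not anticipate a genuine analytic obstacle; the care needed is purely bookkeeping. First, one must verify that ${\phi'}^{-1}(a)$ falls on the correct side of $0$ so that it is a legitimate point in the constrained supremum -- this is exactly where the hypotheses $a>\eta(0)$ (resp.\ $a<\eta(0)$) and the strict convexity of Lemma~\ref{L1} are consumed. Second, one should observe that $\overline{\delta}(\theta)$ carries no factor of $n$, so it shifts the finite-length bound by only an $O(1)$ amount and leaves the exponential rate $D(W_{{\phi'}^{-1}(a)}\|W_0)$ intact in the limit $n\to\infty$; checking that the correction vanishes as $a\to\eta(0)$ then follows from Lemma~\ref{L20f}. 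The substance of the theorem is thus simply the marriage of the finite-length cumulant estimate \eqref{25-12} with the information-geometric Legendre identity of Lemmas~\ref{L23} and~\ref{L7}.
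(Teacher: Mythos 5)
Your proposal is correct and follows essentially the same route as the paper, which obtains Theorem \ref{P4} by combining the Chernoff/Markov-inequality bound of Proposition \ref{P1} with the finite-length cumulant estimate \eqref{25-12} and the Legendre-transform identity of Lemma \ref{L23} (together with $\phi(0)=0$ and Lemma \ref{L7} for the relative-entropy form). The only caveat, which concerns the paper's statement rather than your argument, is that the displayed ``$=$'' after the supremum is really a ``$\ge$'' obtained by evaluating at $\theta={\phi'}^{-1}(a)$, exactly as you do.
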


Combining Theorem \ref{T1} and (\ref{25-12}) of Lemma \ref{lemma:mgf-finite-evaluation},
we can derive the following converse bound.
\begin{theorem} \Label{T4}
For any $a > \eta(0)=\mathsf{E}_0[g]$, we have
\begin{align}
\nonumber 
\lefteqn{ - \log W_0^{\times n}\times P_0 \{ \tilde{g}^n(X^{n+1}) \ge n a \}  } 
 \\
\stackrel{(a)}{\le} & 
 \inf_{s > 0 \atop \theta \in \mathbb{R}, {\bar{\theta}} \le 0}
 \Bigl[ n \phi((1+s)\theta) - n (1+s)\phi(\theta) 
 + \overline{\delta}((1+s)\theta) - \underline{\delta}(\theta)
\nonumber \\ 
&- (1+s) \log \left(1- e^{ - n [{\bar{\theta}} a - \phi(\theta+{\bar{\theta}}) + \phi(\theta) 
+\overline{\delta}(\theta+{\bar{\theta}}) -\underline{\delta}(\theta)] } 
\right) 
\Bigr] / s
\nonumber\\
\stackrel{(b)}{\le} & 
 \inf_{s > 0 \atop \theta > {\phi'}^{-1}(a)} \Bigl[
n \phi((1+s)\theta) - n(1+s)\phi(\theta) 
+ \overline{\delta}((1+s)\theta) - \underline{\delta}(\theta)
\nonumber \\
& - (1+s) \log \left(1- e^{n
[(\theta - {\phi'}^{-1}(a)) a + \phi({\phi'}^{-1}(a)) - \phi(\theta)
+\overline{\delta}({\phi'}^{-1}(a)) -\underline{\delta}(\theta)
]} \right)  
\Bigr] / s 
\nonumber\\
\stackrel{(c)}{=} 
& \inf_{s > 0 \atop \theta > {\phi'}^{-1}(a)} 
n D_{1+s}(W_\theta\|W_0)
+\frac{1}{s}[\overline{\delta}((1+s)\theta) - \underline{\delta}(\theta)] \nonumber \\
 &\quad -\frac{1+s}{s}
 \log \left(1- e^{-n D(W_{{\phi'}^{-1}(a)}\|W_{\theta})
+\overline{\delta}({\phi'}^{-1}(a)) -\underline{\delta}(\theta)
} \right) .
   \Label{27-33}
\end{align}
Similarly, for any $a < \eta(0)=\mathsf{E}_0[g]$, we  have
\begin{align}
\lefteqn{ - \log W_0^{\times n}\times P_0 \{ \tilde{g}^n(X^{n+1}) \le n a \} } \nonumber \\
\le& \inf_{s > 0 \atop \theta \in \mathbb{R}, {\bar{\theta}} \ge 0}
 \Bigl[ n\phi((1+s)\theta) - n(1+s)\phi(\theta) 
+ \overline{\delta}((1+s)\theta) - \underline{\delta}(\theta)
\nonumber \\
&
 - (1+s) \log \left(1- e^{ - n [{\bar{\theta}} a - \phi(\theta+{\bar{\theta}}) + \phi(\theta) 
+\overline{\delta}(\theta+{\bar{\theta}}) -\underline{\delta}(\theta)] } \right) 
\Bigr] / s
\nonumber\\
\le& \inf_{s > 0 \atop \theta < {\phi'}^{-1}(a)} \Bigl[
 n \phi((1+s)\theta) - (n-1)(1+s)\phi(\theta) 
+ \overline{\delta}((1+s)\theta) - \underline{\delta}(\theta)
\nonumber \\
& - (1+s) \log \left(1- e^{n [(\theta - {\phi'}^{-1}(a)) a + \phi({\phi'}^{-1}(a)) - \phi(\theta)
+\overline{\delta}({\phi'}^{-1}(a)) -\underline{\delta}(\theta)
]} \right) 
\Bigr] / s 
\nonumber\\
 =& \inf_{s > 0 \atop \theta < {\phi'}^{-1}(a)} 
n D_{1+s}(W_\theta\|W_0)
+\frac{1}{s}[\overline{\delta}((1+s)\theta) - \underline{\delta}(\theta)] \nonumber \\
 &\quad -\frac{1+s}{s}
 \log \left(1- e^{-n D(W_{{\phi'}^{-1}(a)}\|W_{\theta})
+\overline{\delta}({\phi'}^{-1}(a)) -\underline{\delta}(\theta)
} \right) . \nonumber
\end{align}
\end{theorem}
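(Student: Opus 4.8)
The plan is to derive the bound from a general change-of-measure inequality for the tail of a random variable, expressed through its cumulant generating function (what should be the content of Theorem~\ref{T1}), and then to substitute the two-sided estimate $n\phi(\theta)+\underline{\delta}(\theta)\le\phi_n(\theta)\le n\phi(\theta)+\overline{\delta}(\theta)$ of Lemma~\ref{lemma:mgf-finite-evaluation} together with the closed forms of Lemma~\ref{L7}. Write $Q:=W_0^{\times n}\times P_0$, $Y:=\tilde{g}^n(X^{n+1})$, $\mu:=na$, and $\psi(\theta):=\phi_n(\theta)=\log\mathsf{E}_0[e^{\theta Y}]$; for each real $\theta$ let $Q_\theta$ be the exponentially tilted law with density $e^{\theta Y-\psi(\theta)}$ with respect to $Q$. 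First I would prove the general bound. For the event $A:=\{Y\ge\mu\}$ and any $s>0$, H\"older's inequality with exponents $1+s$ and $(1+s)/s$ applied to $Q_\theta(A)=\mathsf{E}_0[e^{\theta Y-\psi(\theta)}\mathbf{1}_A]$ gives
\[
Q_\theta(A)\le e^{\frac{\psi((1+s)\theta)-(1+s)\psi(\theta)}{1+s}}\,Q(A)^{\frac{s}{1+s}},
\]
equivalently $-\log Q(A)\le\frac{1}{s}\bigl[\psi((1+s)\theta)-(1+s)\psi(\theta)\bigr]-\frac{1+s}{s}\log Q_\theta(A)$; and for $\bar\theta\le0$, using $e^{\bar\theta(Y-\mu)}\ge1$ on $\{Y<\mu\}$ and $\mathsf{E}_{Q_\theta}[e^{\bar\theta Y}]=e^{\psi(\theta+\bar\theta)-\psi(\theta)}$, we get $Q_\theta(A)=1-Q_\theta\{Y<\mu\}\ge1-e^{-x}$ with $x:=\bar\theta\mu-\psi(\theta+\bar\theta)+\psi(\theta)$. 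Chaining these and infimizing over $s>0$, $\theta\in\bR$, $\bar\theta\le0$ bounds $-\log Q(A)$ in terms of $\phi_n$.

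Next I would pass from $\phi_n$ to $\phi$ using Lemma~\ref{lemma:mgf-finite-evaluation}; this produces the inequality $(a)$. The point requiring care is the direction of each substitution: $\psi((1+s)\theta)$ enters with positive sign and is replaced by its upper bound $n\phi((1+s)\theta)+\overline{\delta}((1+s)\theta)$; $-(1+s)\psi(\theta)$ enters with negative sign, so $\psi(\theta)$ is replaced by its lower bound $n\phi(\theta)+\underline{\delta}(\theta)$; and the term $-\frac{1+s}{s}\log(1-e^{-x})$ is \emph{decreasing} in $x$, so I need a \emph{lower} bound on $x=\bar\theta\mu-\psi(\theta+\bar\theta)+\psi(\theta)$, obtained by upper bounding $\psi(\theta+\bar\theta)$ and again lower bounding $\psi(\theta)$. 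This yields $(a)$, provided the resulting argument of the logarithm stays in $(0,1)$.

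Then $(b)$ follows by restricting the infimum to $\theta>{\phi'}^{-1}(a)$ and taking the single feasible value $\bar\theta:={\phi'}^{-1}(a)-\theta$, which is $\le0$ exactly because $\theta\ge{\phi'}^{-1}(a)$; with $\theta+\bar\theta={\phi'}^{-1}(a)$ the exponent in the logarithm becomes $n\bigl[(\theta-{\phi'}^{-1}(a))a+\phi({\phi'}^{-1}(a))-\phi(\theta)\bigr]$ up to the $\overline{\delta},\underline{\delta}$ corrections, and restricting the domain only enlarges the infimum. Finally $(c)$ is pure rewriting via Lemma~\ref{L7} and $\phi(0)=0$: formula \eqref{1-2} with $\bar\theta=0$ gives $nD_{1+s}(W_\theta\|W_0)=\frac{n}{s}\bigl[\phi((1+s)\theta)-(1+s)\phi(\theta)\bigr]$, and formula \eqref{1-1} with $\phi'({\phi'}^{-1}(a))=a$ gives $nD(W_{{\phi'}^{-1}(a)}\|W_\theta)=n\bigl[({\phi'}^{-1}(a)-\theta)a-\phi({\phi'}^{-1}(a))+\phi(\theta)\bigr]$, turning $(b)$ verbatim into the stated form. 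The lower-tail statement for $a<\eta(0)$ is the mirror image: the Chernoff step under $Q_\theta$ is run with $\bar\theta\ge0$ against $\{Y>\mu\}$, and the specialization takes $\bar\theta={\phi'}^{-1}(a)-\theta$ with $\theta<{\phi'}^{-1}(a)$.

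I expect the main obstacle to be exactly this bookkeeping: verifying that inserting the \emph{two-sided} estimate of Lemma~\ref{lemma:mgf-finite-evaluation} into the \emph{several} appearances of $\phi_n$ — one of which lives inside a term $\log(1-e^{-x})$ whose monotonicity in $x$ runs opposite to naive expectation — is everywhere in the bound-weakening direction, and that the argument $1-e^{-x}$ stays strictly positive. The latter is where the hypothesis $a>\eta(0)$, the restriction $\theta>{\phi'}^{-1}(a)$, and the strict positivity $D(W_{{\phi'}^{-1}(a)}\|W_\theta)>0$ are used, with the $\overline{\delta},\underline{\delta}$ terms being harmless perturbations that vanish as $\theta\to0$ by Lemma~\ref{L20f}; everything else reduces to convexity of $\phi$ and the explicit formulas of Lemma~\ref{L7}.
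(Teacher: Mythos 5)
Your proposal is correct and follows essentially the same route as the paper: the paper proves this by invoking its Theorem \ref{T1} (whose part $(a)$ is exactly your H\"older/tilted-measure inequality, stated there as monotonicity of the R\'enyi relative entropy under the binary partition $\{Y\ge\mu\},\{Y<\mu\}$, plus the Chernoff step under $P_\theta$ with $\bar\theta\le 0$) and then substituting the two-sided estimate \eqref{25-12}, with $(b)$ obtained by the choice $\bar\theta={\phi'}^{-1}(a)-\theta$ and $(c)$ by Lemma \ref{L7}, exactly as you describe. Your careful bookkeeping of which bound of \eqref{25-12} to insert at each occurrence of $\phi_n$ (in particular inside the $\log(1-e^{-x})$ term) is the right reading of the theorem; note only that it yields the coefficient $(1+s)\underline{\delta}(\theta)$ and places the $\overline{\delta},\underline{\delta}$ corrections outside the factor $n$ in the exponent, consistent with line $(c)$ and with Theorem \ref{T27}.
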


\begin{proof}
$(a)$ follows from the combination of $(a)$ of Theorem \ref{T1} and 
\eqref{25-12} of Lemma \ref{lemma:mgf-finite-evaluation}.
$(b)$ and $(c)$ can be shown by the same way as $(b)$ and $(c)$ of Theorem \ref{T1}.
\end{proof}

Due to the expressions in Theorems \ref{P4} and \ref{T4},
the above upper and lower bounds are $O(1)$-computable.
These also attain the asymptotic tightness in the sense of (T2) and (T3) as follows.
From Lemma \ref{L23} and Theorems \ref{P4} and \ref{T4}, we can derive the large deviation evaluation.
\begin{corollary} \cite[Theorem 3.1.2]{DZ} \label{theorem:general-markov-ldp}
For arbitrary $\delta > 0$, we have
\begin{align*} 
\lim_{n \to \infty} - \frac{1}{n} \log W_0^{\times n}\times P_0\left\{ \tilde{g}^n (X^{n+1})- n 
\eta(0) \ge \delta n \right\} 
& = \sup_{\theta \ge 0}[{\phi'}^{-1}( 
\eta(0)+\delta) - \phi(\theta) ]
\\
\lim_{n \to \infty} - \frac{1}{n} \log W_0^{\times n}\times P_0\left\{ \tilde{g}^n (X^{n+1})- n 
\eta(0) \le - \delta n \right\}
& = \sup_{\theta \le 0}[{\phi'}^{-1}( 
\eta(0)-\delta) - \phi(\theta)].
\end{align*}
\end{corollary}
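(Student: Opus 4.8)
The plan is to read both halves of the limit directly off the finite-length bounds already proved, and then to use Lemma~\ref{L23} to recognize the resulting quantity as the Legendre transform. I treat the upper-tail assertion in detail; the lower-tail one is entirely symmetric, using the ``$a<\eta(0)$'' parts of Theorems~\ref{P4} and~\ref{T4} and of Lemma~\ref{L23}. Fix $\delta>0$ and set $a:=\eta(0)+\delta>\eta(0)=\mathsf{E}_0[g]$ (assuming $a$ lies in the range of $\phi'$, so that ${\phi'}^{-1}(a)$ is defined; then ${\phi'}^{-1}(a)>0$ since $\phi'(0)=\eta(0)$).

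First I would establish the lower bound on the exponent. Theorem~\ref{P4} gives, for every $n$,
\begin{align*}
-\frac1n\log W_0^{\times n}\times P_0\{\tilde g^n(X^{n+1})\ge na\}
\ \ge\ D(W_{{\phi'}^{-1}(a)}\|W_0)-\frac1n\,\overline{\delta}({\phi'}^{-1}(a)).
\end{align*}
Since $\overline{\delta}({\phi'}^{-1}(a))$ does not depend on $n$, letting $n\to\infty$ yields $\liminf_{n\to\infty}\bigl(-\frac1n\log W_0^{\times n}\times P_0\{\tilde g^n(X^{n+1})\ge na\}\bigr)\ge D(W_{{\phi'}^{-1}(a)}\|W_0)$.

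For the matching upper bound on the exponent, fix any $s>0$ and any $\theta>{\phi'}^{-1}(a)$ and discard the infimum in bound $(c)$ of Theorem~\ref{T4}; dividing by $n$ gives
\begin{align*}
-\frac1n\log W_0^{\times n}\times P_0\{\tilde g^n(X^{n+1})\ge na\}
\ \le\ D_{1+s}(W_\theta\|W_0)
+\frac{1}{ns}\bigl[\overline{\delta}((1+s)\theta)-\underline{\delta}(\theta)\bigr]
-\frac{1+s}{ns}\log\!\Bigl(1-e^{-nD(W_{{\phi'}^{-1}(a)}\|W_\theta)+\overline{\delta}({\phi'}^{-1}(a))-\underline{\delta}(\theta)}\Bigr).
\end{align*}
The second term on the right is $O(1/n)$. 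For the third term, the key point is that $D(W_{{\phi'}^{-1}(a)}\|W_\theta)>0$ strictly: by Lemma~\ref{L7} this divergence equals $\phi(\theta)-\phi({\phi'}^{-1}(a))-a(\theta-{\phi'}^{-1}(a))$, which is positive by strict convexity of $\phi$ whenever $\theta\ne{\phi'}^{-1}(a)$; hence the exponent inside the logarithm tends to $-\infty$, the logarithm tends to $\log 1=0$, and the third term is again $o(1)$. Therefore $\limsup_{n\to\infty}\bigl(-\frac1n\log W_0^{\times n}\times P_0\{\tilde g^n(X^{n+1})\ge na\}\bigr)\le D_{1+s}(W_\theta\|W_0)$ for each fixed $(s,\theta)$; taking the infimum over $s>0$ and $\theta>{\phi'}^{-1}(a)$ and invoking Lemma~\ref{L23}, the right-hand side collapses to $D(W_{{\phi'}^{-1}(a)}\|W_0)=\sup_{\theta\ge0}[\theta a-\phi(\theta)]$.

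Combining the two displays shows the limit exists and equals $\sup_{\theta\ge0}[\theta a-\phi(\theta)]$ with $a=\eta(0)+\delta$, which is the assertion; the lower-tail identity follows the same way with the roles of $\sup_{\theta\ge0}$/$\theta>{\phi'}^{-1}(a)$ replaced by $\sup_{\theta\le0}$/$\theta<{\phi'}^{-1}(a)$. The one delicate point is the order of limits in the upper bound: one must fix $(s,\theta)$, send $n\to\infty$, and only then optimize over $(s,\theta)$ before applying Lemma~\ref{L23}; together with the observation that $D(W_{{\phi'}^{-1}(a)}\|W_\theta)>0$ is what makes the $\log(1-\cdots)$ correction vanish. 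Everything else reduces to noting that the $\overline{\delta},\underline{\delta}$ terms are $n$-independent constants, so they contribute nothing after division by $n$.
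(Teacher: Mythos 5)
Your proposal is correct and follows exactly the route the paper intends: the paper gives no written proof of this corollary beyond the remark that it follows ``from Lemma \ref{L23} and Theorems \ref{P4} and \ref{T4},'' and your argument—Theorem \ref{P4} for the lower bound on the exponent, bound $(c)$ of Theorem \ref{T4} with fixed $(s,\theta)$ and $n\to\infty$ (using strict positivity of $D(W_{{\phi'}^{-1}(a)}\|W_\theta)$ to kill the logarithmic correction) for the upper bound, then Lemma \ref{L23} to identify the infimum with the Legendre transform—is precisely that derivation, including the correct handling of the order of limits.
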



From Theorems \ref{P4} and \ref{T4}, we can derive the 
moderate deviation evaluation. 
\begin{corollary} \label{theorem:general-markov-mdp}
For arbitrary $t \in (0,1/2)$ and $\delta > 0$, we have
\begin{align} 
\lim_{n \to \infty} - \frac{1}{n^{1-2t}}  \log W_0^{\times n}\times P_0\left\{ \tilde{g}^n(X^{n+1}) - n 
\eta(0) \ge n^{1-t} \delta  \right\} 
&= \frac{\delta^2}{2 \frac{d^2\phi}{d\theta^2}(0)}
\Label{eq:general-markov-mdp-up} \\
\lim_{n \to \infty} - \frac{1}{n^{1-2t}}  \log W_0^{\times n}\times P_0\left\{ \tilde{g}^n(X^{n+1}) - n \eta(0) \le - n^{1-t} \delta  \right\} 
&= \frac{\delta^2}{2 \frac{d^2\phi}{d\theta^2}(0)}.
\end{align}
\end{corollary}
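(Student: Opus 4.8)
The plan is to squeeze $-\log W_0^{\times n}\times P_0\{\tilde{g}^n(X^{n+1})\ge n a_n\}$, where $a_n:=\eta(0)+n^{-t}\delta$ (so that the event coincides with $\{\tilde{g}^n(X^{n+1})-n\eta(0)\ge n^{1-t}\delta\}$), between the finite-length bounds of Theorems~\ref{P4} and~\ref{T4}, and to read off the constant from the quadratic behaviour of $\phi$ near $\theta=0$. Throughout I use $\phi(0)=0$, $\phi'(0)=\eta(0)$ and $\phi''(0)>0$ (Lemma~\ref{L1}), writing $\phi''(0)$ for $\tfrac{d^2\phi}{d\theta^2}(0)$; hence ${\phi'}^{-1}$ is differentiable at $\eta(0)$ with derivative $1/\phi''(0)$, so $\theta_n:={\phi'}^{-1}(a_n)=\tfrac{\delta}{\phi''(0)}n^{-t}(1+o(1))\to 0$. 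Only the upper-tail limit is treated; the lower-tail limit follows by the symmetric argument using the second halves of Theorems~\ref{P4} and~\ref{T4}.

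For the ``$\ge$'' direction, Theorem~\ref{P4} applied with $a=a_n$ (legitimate since $a_n>\eta(0)$) gives directly
\[
-\log W_0^{\times n}\times P_0\{\tilde{g}^n(X^{n+1})\ge n a_n\}\ \ge\ nD(W_{\theta_n}\|W_0)-\overline{\delta}(\theta_n).
\]
By Lemma~\ref{L20}, $D(W_{\theta_n}\|W_0)=\tfrac12\phi''(0)\theta_n^2(1+o(1))=\tfrac{\delta^2}{2\phi''(0)}n^{-2t}(1+o(1))$, so $nD(W_{\theta_n}\|W_0)=\tfrac{\delta^2}{2\phi''(0)}n^{1-2t}(1+o(1))$, while $\overline{\delta}(\theta_n)\to 0$ by Lemma~\ref{L20f} and becomes negligible once divided by $n^{1-2t}\to\infty$ (this is where $t<1/2$ enters). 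Dividing by $n^{1-2t}$ and letting $n\to\infty$ gives $\liminf_n-\tfrac1{n^{1-2t}}\log W_0^{\times n}\times P_0\{\cdots\}\ge\tfrac{\delta^2}{2\phi''(0)}$.

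For the ``$\le$'' direction, I would specialise the infimum in \eqref{27-33} of Theorem~\ref{T4} to a single pair $(s,\theta)=(s_n,\theta_n')$ with $s_n\to 0$ slowly, say $s_n=n^{-1/4}$, and $\theta_n':=\theta_n+\Delta_n$ where $\Delta_n$ is any rate with $n^{-1/2}\ll\Delta_n\ll n^{-t}$ (such a rate exists precisely because $t<1/2$; e.g.\ $\Delta_n=n^{-(t+1/2)/2}$), so that $\theta_n'>{\phi'}^{-1}(a_n)$ and $\theta_n'=\theta_n(1+o(1))$. Dividing \eqref{27-33} by $n^{1-2t}$, a term-by-term estimate then shows: (i) using the expansion $D_{1+s}(W_\theta\|W_0)=\tfrac{\phi((1+s)\theta)-(1+s)\phi(\theta)}{s}=\tfrac12\phi''(0)(1+s)\theta^2+O(\theta^3)$ with the remainder uniform for $s$ in a bounded interval, $nD_{1+s_n}(W_{\theta_n'}\|W_0)=\tfrac{\delta^2}{2\phi''(0)}n^{1-2t}(1+o(1))$; (ii) since $v_\theta$ and $w_\theta$ are Lipschitz in $\theta$ near $0$ with $v_0=u$, $w_0=P_0$ (the regularity already exploited in Lemma~\ref{L20f}), $\overline{\delta}(\theta)$ and $\underline{\delta}(\theta)$ are $O(\theta)$, so $\tfrac1{s_n}[\overline{\delta}((1+s_n)\theta_n')-\underline{\delta}(\theta_n')]=O(n^{1/4-t})=o(n^{1-2t})$; (iii) by Lemma~\ref{L20}, $nD(W_{{\phi'}^{-1}(a_n)}\|W_{\theta_n'})=\tfrac12\phi''(0)\,n\Delta_n^2(1+o(1))\to\infty$, so the argument of the logarithm in \eqref{27-33} tends to $1$ and $-\tfrac{1+s_n}{s_n}\log\!\bigl(1-e^{-nD(W_{{\phi'}^{-1}(a_n)}\|W_{\theta_n'})+\overline{\delta}({\phi'}^{-1}(a_n))-\underline{\delta}(\theta_n')}\bigr)=O\!\bigl(s_n^{-1}e^{-c\,n\Delta_n^2}\bigr)=o(n^{1-2t})$ for some $c>0$. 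Summing these, $-\tfrac1{n^{1-2t}}\log W_0^{\times n}\times P_0\{\cdots\}\le\tfrac{\delta^2}{2\phi''(0)}+o(1)$, so $\limsup_n\le\tfrac{\delta^2}{2\phi''(0)}$, which together with the lower bound finishes the proof. The one genuinely delicate point is the choice of the auxiliary rates: $s_n$ must go to $0$ to close the R\'enyi/relative-entropy gap, yet slowly enough that the $\overline{\delta},\underline{\delta}$ corrections in (ii) remain $o(n^{1-2t})$, and $\Delta_n$ must be small enough ($\ll n^{-t}$) that $\theta_n'$ stays on the moderate-deviation scale but large enough ($\gg n^{-1/2}$) that $n\Delta_n^2\to\infty$ so the logarithmic correction vanishes; the hypothesis $t<1/2$ is exactly what makes the window $n^{-1/2}\ll\Delta_n\ll n^{-t}$ nonempty, and quantifying the error terms uses the Lipschitz regularity of $\theta\mapsto v_\theta,w_\theta$ near $0$.
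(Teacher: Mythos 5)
Your proposal is correct and follows essentially the same route as the paper: the lower bound comes from Theorem \ref{P4} with $a_n=\eta(0)+\delta n^{-t}$ together with Lemmas \ref{L20} and \ref{L20f}, and the upper bound from specializing the final expression of Theorem \ref{T4} to a near-optimal $(s,\theta)$ and using the quadratic expansions of $D$ and $D_{1+s}$. The only difference is cosmetic: the paper fixes $s>0$ and an offset $\xi n^{-t}\phi''(0)^{-1}$, takes $n\to\infty$, and only then sends $\xi\to 0$ and $s\to 0$, whereas you run a diagonal sequence $s_n\to 0$, $\Delta_n$ in the window $n^{-1/2}\ll\Delta_n\ll n^{-t}$, which requires (and you correctly supply) uniform control of the error terms.
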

\begin{proofof}{Corollary \ref{theorem:general-markov-mdp}}
We only prove \eqref{eq:general-markov-mdp-up}. 
To show the inequality $\ge$ in \eqref{eq:general-markov-mdp-up}, we employ 
\eqref{27-32}.
That is, we substitute 
$a_n:= \eta(0)+\frac{\delta}{n^t}$
into $a$ in \eqref{eq:general-markov-mdp-up}.
Since $\frac{d {\phi'}^{-1}(\eta)}{d\eta}=
\frac{1}{\phi''({\phi'}^{-1}(\eta))}$, 
we have
${\phi'}^{-1}(a_n)= 
\frac{\delta}{\frac{d^2\phi}{d\theta^2}(0) n^t}
+o(\frac{1}{n^t})\to 0$.
Thus, Relation (\ref{28-5}) implies
$\overline{\delta}({\phi'}^{-1}(a_n))) \to 0$.
Hence, Relation (\ref{27-20}) yields that
\begin{align}
\frac{1}{n^{1-2t}}
(n D(W_{{\phi'}^{-1}(a_n)} \| W_{0} )- \overline{\delta}({\phi'}^{-1}(a_n)))
\to 
\frac{\delta^2}{2 \frac{d^2\phi}{d\theta^2}(0)}. \Label{27-30}
\end{align}
Applying (\ref{27-30}) to (\ref{27-32}), 
we obtain the part ``$\ge$'' in (\ref{eq:general-markov-mdp-up}).

To show the inequality $\le$ in \eqref{eq:general-markov-mdp-up}, we employ 
the final term of \eqref{27-33}.
That is, we substitute 
$a_n:= \eta(0)+\frac{\delta}{n^t}$
and $\theta_n:= {\phi'}^{-1}(a_n)+ \frac{\xi}{n^t} 
\frac{d^2\phi}{d\theta^2}(0)^{-1}$
into $a$ and $\theta$ in the final term of \eqref{27-33}.
Then, we have 
$\theta_n= \frac{\delta+\xi}{\frac{d^2\phi}{d\theta^2}(0)n^t}
+o(\frac{1}{n^t})\to 0$.
Thus, Relation (\ref{28-5}) implies that
$\frac{1}{s}[\overline{\delta}((1+s)\theta_n) - \underline{\delta}(\theta_n)]\to 0$
and
$\overline{\delta}({\phi'}^{-1}(a_n)) -\underline{\delta}(\theta_n)
\to 0$.
We also have $n D(W_{{\phi'}^{-1}(a_n)}\|W_{\theta_n}) \to \infty$. 
Hence, (\ref{27-21}) yields that
\begin{align}
& \lim_{n \to \infty}
 \frac{1}{n^{1-2t}}
\Bigg[ \inf_{s > 0 \atop \theta > {\phi'}^{-1}(a_n)} 
n D_{1+s}(W_\theta\|W_0)
+\frac{1}{s}[\overline{\delta}((1+s)\theta) - \underline{\delta}(\theta)] 
\nonumber \\
& -\frac{1+s}{s}
 \log \left(1- e^{-n D(W_{{\phi'}^{-1}(a_n)}\|W_{\theta})
+\overline{\delta}({\phi'}^{-1}(a_n)) -\underline{\delta}(\theta)
} \right)\Bigg] \nonumber \\
\le &
 \lim_{n \to \infty}
 \frac{1}{n^{1-2t}}
\Bigg[n D_{1+s}(W_{\theta_n}\|W_0)
+\frac{1}{s}[\overline{\delta}((1+s)\theta_n) - \underline{\delta}(\theta_n)]
\nonumber \\
& -\frac{1+s}{s}
 \log \left(1- e^{-n D(W_{{\phi'}^{-1}(a_n)}\|W_{\theta_n})
+\overline{\delta}({\phi'}^{-1}(a_n)) -\underline{\delta}(\theta_n)
} \right) \Bigg] \nonumber \\
=&
\lim_{n \to \infty}
\frac{n}{n^{1-2t}} D_{1+s}(W_{\theta_n}\|W_0)
=
\frac{(\delta+\xi)^2}{2 \frac{d^2\phi}{d\theta^2}(0)}
(1+s).
\Label{27-31}
\end{align}
Finally, we take the limits $\xi\to 0$ and $s\to 0$.
Then,
applying (\ref{27-31})
to (\ref{27-33}), 
we obtain the part ``$\ge$'' in (\ref{eq:general-markov-mdp-up}).
\end{proofof}

\section{Simple hypothesis testing}\Label{s15}
Next, we consider the binary simple hypothesis testing.
To formulate the binary simple hypothesis testing, we consider the case 
that the null and alternative hypotheses are $P_0$ and $P_1$.
For theoretical simplicity, we often focus on randomized tests as follows.
We fix a random variable $T$ taking values in the interval $[0,1]$, which is called a test.
When we observe $T=t$, we support the alternative hypothesis with probability $t$
and support the null hypothesis with probability $1-t$.
Then, the first and the second error probabilities are given as
$\mathsf{E}_{P_1}[T] $ and $\mathsf{E}_{P_0}[1-T]$,
where $\mathsf{E}_{P_i}$ denotes the expectation under the distribution $P_i$.
When we choose the random variable $T$ to be the test function with support $S$, 
the random variable $T$ realizes the test whose rejection region $S$.

Then, we consider the following value.
\begin{align}
\beta_\epsilon(P_1 \| P_0):=
\min_{T}\{ \mathsf{E}_{P_0}[1-T] | \mathsf{E}_{P_1}[T] \le \epsilon\}
=
\min_{T}\{ \mathsf{E}_{P_0}[1-T] | \mathsf{E}_{P_1}[T] = \epsilon\}.
\end{align}
Since we allow randomized tests,
the optimum test $T$ is realized with the condition $\mathsf{E}_{P_1}[T] = \epsilon$.

Now, we consider the hypothesis testing with the two hypotheses
$W_0^{n-1}\times P_0$ and $W_1^{n-1}\times P_1$.
Then, we choose the functions 
$g(x,\bar{x}):= \log \frac{W_1(x|\bar{x})}{W_0(x|\bar{x})}$
and
$h(\bar{x}):= \log \frac{P_1(\bar{x})}{P_0(\bar{x})}$.
Under these choices, 
$\phi(1)=0$ and
we can evaluate the minimum 2nd error probability
by using the functions 
$\hat{\theta}(r)$,
$\underline{\delta}(\theta)$,
$\overline{\delta}(\theta)$,
and
$\phi(\theta)$
defined in (\ref{12-31-1}), (\ref{25-12b}), (\ref{25-12b}), 
and (\ref{4-1-2})
as well as the relative entropies
$D(W_{\hat{\theta}(r)}\|W_{\theta})$
and $D_{1+s}(W_{\theta}\|W_0)$.

\begin{theorem}\Label{T27}
The minimum 2nd error probability  
$\beta_{e^{-nr}}(W_1^{\times n}\times P_1 \| W_0^{\times n}\times P_0) $
defined in (\ref{23-1}) satisfies 
\begin{align}
& 
 \sup_{0 \le \theta \le 1} \frac{n (-\theta  r - \phi(\theta))-
\underline{\delta}(\theta) }{1-\theta}
\nonumber \\
\le & -\log \beta_{e^{-nr}}(W_1^{\times n}\times P_1 \| W_0^{\times n}\times P_0) \nonumber \\
\stackrel{(a)}{\le} & \inf_{\bar{\theta}\ge 0, s>0,\theta \in (0,1)}
\frac{1}{s} [n (\phi( (1+s) \theta) -(1+s) \phi(\theta)) 
+( \overline{\delta}( (1+s) \theta) -(1+s) \underline{\delta}(\theta))\nonumber \\
&\quad -(1+s) \log (1-
2 e^{
n\frac{ -(1+\bar{\theta})\phi(\theta)+\phi( (1+\bar{\theta})\theta-\bar{\theta} ) -\bar{\theta} r}{1+\bar{\theta}}
+\frac{ -(1+\bar{\theta})\underline{\delta}(\theta)+\overline{\delta}( (1+\bar{\theta})\theta-\bar{\theta} )}{1+\bar{\theta}}
}
) 
]\nonumber\\
\stackrel{(b)}{\le}
 & \inf_{s>0,\theta \in (\hat{\theta}(r),1)}
\frac{1}{s} [n (\phi( (1+s) \theta) -(1+s) \phi(\theta))
+( \overline{\delta}( (1+s) \theta) -(1+s) \underline{\delta}(\theta))\nonumber \\
&\quad 
-(1+s) \log (1-
2 e^{n(-\phi(\theta)+\phi (\hat{\theta}(r))
+(\theta-\hat{\theta}(r)) \frac{d\phi}{d\theta}(\hat{\theta}(r)) )
-\underline{\delta}(\theta)
+\frac{(1-\theta) \overline{\delta}(\hat{\theta}(r)) }{1-\hat{\theta}(r)} 
}
) ]\nonumber \\
\stackrel{(c)}{=} & \inf_{s>0,\theta \in (\hat{\theta}(r),1)}
n D_{1+s}(W_{\theta}\|W_0) 
+\frac{1}{s}( \overline{\delta}( (1+s) \theta) -(1+s) \underline{\delta}(\theta))\nonumber \\
&\quad- \frac{1+s}{s} \log (1- 2 e^{-n D(W_{\hat{\theta}(r)}\|W_{\theta})
-\underline{\delta}(\theta)
+\frac{(1-\theta) \overline{\delta}(\hat{\theta}(r)) }{1-\hat{\theta}(r)} }). 
\nonumber
\end{align}
\end{theorem}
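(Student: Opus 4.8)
The plan is to reduce the statement to a one-shot hypothesis-testing problem for the pair $P_0^{(n)}:=W_0^{\times n}\times P_0$, $P_1^{(n)}:=W_1^{\times n}\times P_1$, and then insert the finite-length cumulant estimate of Lemma~\ref{lemma:mgf-finite-evaluation} together with the Legendre identities of Section~\ref{s11}. The key structural fact, built in from the choice $g=\log(W_1/W_0)$, $h=\log(P_1/P_0)$, is that $\tilde g^n(X^{n+1})=\log\frac{dP_1^{(n)}}{dP_0^{(n)}}$, so that $e^{\phi_n(\theta)}=\sum (P_0^{(n)})^{1-\theta}(P_1^{(n)})^{\theta}$ (hence $\phi_n(1)=0$, so $\phi(1)=0$; and since $\phi(0)=0$ and $\phi$ is convex, $\phi(\theta)\le 0$ on $[0,1]$), while the cumulant generating function of $\tilde g^n$ under the competing hypothesis $P_1^{(n)}$ is $\phi_n(1+\cdot)$, to which Lemma~\ref{lemma:mgf-finite-evaluation}, evaluated at $1+\theta$, applies. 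By Neyman--Pearson the optimal rejection region is $S^{*}=\{\tilde g^n< n\mu^{*}\}$ with $\mu^{*}$ determined by $P_1^{(n)}(S^{*})=e^{-nr}$, so that $\beta_{e^{-nr}}=P_0^{(n)}\{\tilde g^n\ge n\mu^{*}\}$; both quantities are tail probabilities of $\tilde g^n$ — the first under $P_1^{(n)}$, the second under $P_0^{(n)}$ — and both are controlled by the machinery already used for Theorems~\ref{P4} and~\ref{T4}.

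For the lower bound I would run a likelihood-ratio test and pin the threshold by the first-kind constraint: from $P_1^{(n)}\{\tilde g^n< n\mu\}\le e^{(1-\theta)n\mu}\mathsf{E}_0[e^{\theta\tilde g^n}]=e^{(1-\theta)n\mu+\phi_n(\theta)}$ (valid for $\theta\in(0,1)$), the choice $(1-\theta)n\mu=-nr-\phi_n(\theta)$ is admissible, and feeding it into $\beta\le e^{-\theta n\mu+\phi_n(\theta)}$ yields the exact finite-length inequality $-\log\beta_{e^{-nr}}\ge\frac{-\theta nr-\phi_n(\theta)}{1-\theta}$ for every $\theta\in(0,1)$. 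Inserting Lemma~\ref{lemma:mgf-finite-evaluation} and taking the supremum over $\theta\in[0,1]$ then produces a bound of the form $\sup_{0\le\theta\le 1}\frac{n(-\theta r-\phi(\theta))-(\text{correction})}{1-\theta}\le-\log\beta_{e^{-nr}}$; the supremum is precisely the quantity identified with $D(W_{\hat\theta(r)}\|W_0)$ in Lemma~\ref{L1-3-1}, which is why $\hat\theta(r)$ governs the answer. Arranging the correction to be exactly $\underline{\delta}(\theta)$ is the delicate point of this half: I expect one must run the change of measure through the exponential-family member $W_\theta^{(n)}$ itself, whose density relative to $P_0^{(n)}$ differs from $e^{\theta\tilde g^n}$ by the eigenvector factors that produce $\inner{v_\theta}{w_\theta}$ and $\max_x v_\theta(x)$.

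For the upper bound, steps $(a)$, $(b)$, $(c)$ parallel the proof of Theorem~\ref{T4}. Step $(a)$ is the generic one-shot converse: change the measure from $P_0^{(n)}$ to $W_\theta^{(n)}$, lower-bound $P_0^{(n)}(S^{c})$ by a controlled multiple of $W_\theta^{(n)}(S^{c})$ on the event where $dP_0^{(n)}/dW_\theta^{(n)}$ is not too small, then write $W_\theta^{(n)}(S^{c})\ge 1-W_\theta^{(n)}(S)-(\text{Chernoff tail})$ and bound $W_\theta^{(n)}(S)\le e^{(\text{LR control})}P_1^{(n)}(S)\le e^{(\text{LR control})}e^{-nr}$ up to a further Chernoff tail; the two one-sided likelihood-ratio tails plus this $e^{-nr}$-term combine into the factor $2$ in front of the exponential, and Lemma~\ref{lemma:mgf-finite-evaluation} (with its base point shifted as needed) turns the several cumulant generating functions into the $\phi,\overline{\delta},\underline{\delta}$ expression displayed in $(a)$. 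Step $(b)$: substitute $\bar\theta=\frac{\theta-\hat\theta(r)}{1-\theta}$ (which is $\ge 0$ exactly when $\theta\ge\hat\theta(r)$); then $1+\bar\theta=\frac{1-\hat\theta(r)}{1-\theta}$ and $(1+\bar\theta)\theta-\bar\theta=\hat\theta(r)$, so the $\delta$-part of the exponent becomes $-\underline{\delta}(\theta)+\frac{(1-\theta)\overline{\delta}(\hat\theta(r))}{1-\hat\theta(r)}$, while the defining equation \eqref{12-31-1} of $\hat\theta(r)$ collapses the $\phi$-part to $-\phi(\theta)+\phi(\hat\theta(r))+(\theta-\hat\theta(r))\frac{d\phi}{d\theta}(\hat\theta(r))$. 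Step $(c)$: Lemma~\ref{L7} rewrites $\phi((1+s)\theta)-(1+s)\phi(\theta)=sD_{1+s}(W_\theta\|W_0)$ and, via \eqref{1-1}, $-\phi(\theta)+\phi(\hat\theta(r))+(\theta-\hat\theta(r))\frac{d\phi}{d\theta}(\hat\theta(r))=-D(W_{\hat\theta(r)}\|W_\theta)$.

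The main obstacle is step $(a)$: obtaining the one-shot converse lower bound on $\beta$ with the correct constant (the $2$) and with the likelihood-ratio control terms in exactly the form that, after inserting Lemma~\ref{lemma:mgf-finite-evaluation} and the specialization of $\bar\theta$, telescopes to $-D(W_{\hat\theta(r)}\|W_\theta)$ plus the stated $\delta$-corrections. Together with the analogous care needed in the achievability half to produce $\underline{\delta}$ rather than a cruder estimate, this is where essentially all the work lies; the remaining manipulations are the convexity/Legendre bookkeeping already carried out in Section~\ref{s11}.
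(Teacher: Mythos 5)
Your overall architecture is the paper's: reduce to a one-shot testing lemma for the pair $P_0^{(n)}=W_0^{\times n}\times P_0$, $P_1^{(n)}=W_1^{\times n}\times P_1$ using $\tilde g^n=\log(dP_1^{(n)}/dP_0^{(n)})$, insert Lemma \ref{lemma:mgf-finite-evaluation}, and specialize $\bar\theta=\frac{\theta-\hat\theta(r)}{1-\theta}$; your computations for steps $(b)$ and $(c)$ are exactly the paper's and are correct. Two points, however, remain genuinely open in your write-up. First, the lower bound: your (correct) Chernoff derivation gives $-\log\beta_{e^{-nr}}\ge\sup_\theta\frac{-\theta nr-\phi_n(\theta)}{1-\theta}$, and since one must \emph{upper}-bound $\phi_n(\theta)$ to lower-bound this quantity, Lemma \ref{lemma:mgf-finite-evaluation} yields the correction $\overline{\delta}(\theta)$, not the stated $\underline{\delta}(\theta)$ (which, since $\underline{\delta}\le\overline{\delta}$, is the \emph{stronger} claim). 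You flag this but do not close it, and your proposed fix (routing the change of measure through $W_\theta^{(n)}$ and its eigenvector factors) does not obviously help, because the quantity that enters is still $\phi_n(\theta)$ and the direction needed is still an upper bound on it. For what it is worth, the paper's own proof of Theorem \ref{T27} is silent on this inequality, and the natural argument (the one-shot lemma preceding \eqref{26-6} applied with $\phi_n$, plus \eqref{25-12}) also produces $\overline{\delta}(\theta)$; so this is a real discrepancy, not a detail you can wave away.

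Second, step $(a)$: the mechanism that produces the exact displayed expression is \emph{not} a truncated change of measure (``lower-bound $P_0^{(n)}(S^c)$ by a multiple of $W_\theta^{(n)}(S^c)$ on the event where the likelihood ratio is not too small''). That route introduces an extra free threshold and an extra additive tail term, and would not reproduce the clean $\frac{1}{s}[\phi_n((1+s)\theta)-(1+s)\phi_n(\theta)]=D_{1+s}(P_\theta^{(n)}\|P_0^{(n)})$ prefactor nor the constant $2$. The paper's route (Appendix lemma giving \eqref{26-6}, mirroring Theorem \ref{T1}) applies the monotonicity of the R\'enyi divergence under the coarse-graining $x\mapsto 1_S(x)$ to get $-\log P_0^{(n)}(S^c)\le D_{1+s}(P_\theta^{(n)}\|P_0^{(n)})-\frac{1+s}{s}\log(1-P_\theta^{(n)}(S))$ with no slack, and then bounds $P_\theta^{(n)}(S)\le P_\theta^{(n)}\{\log\frac{P_1^{(n)}}{P_\theta^{(n)}}<-\gamma\}+e^{\gamma}P_1^{(n)}(S)$, choosing $\gamma$ to equalize these \emph{two} terms — that balancing is the sole source of the factor $2$. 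Your bookkeeping (``two one-sided tails plus the $e^{-nr}$ term combine into the factor 2'') counts three terms and would give different constants. To prove $(a)$ as stated you need the R\'enyi-monotonicity decomposition, not the information-spectrum one.
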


\begin{proof}
The inequality $(a)$ can be shown by combining (\ref{26-6}) and (\ref{25-12}).
To show $(b)$ and $(c)$, 
we restrict $\theta$ in $[\hat{\theta}(r),1]$
and choose $\bar{\theta}$ to be 
$\frac{\theta-\hat{\theta}(r)}{1-\theta} \ge 0$ similar to the proof of (\ref{27-3}).
Then, 
\begin{align*}
&\frac{ -(1+\bar{\theta})\underline{\delta}(\theta)+\overline{\delta}( (1+\bar{\theta})\theta-\bar{\theta} ) }{1+\bar{\theta}} 
=-\underline{\delta}(\theta)
+\frac{\overline{\delta}(\hat{\theta}(r)) }{1+\bar{\theta}} 
=-\underline{\delta}(\theta)
+\frac{(1-\theta) \overline{\delta}(\hat{\theta}(r)) }{1-\hat{\theta}(r)} .
\end{align*}
As is shown in the proof of (\ref{27-3}),
we have
\begin{align*}
& \frac{ -(1+\bar{\theta})\phi(\theta)+\phi( (1+\bar{\theta})\theta-\bar{\theta} ) -\bar{\theta} r}{1+\bar{\theta}}\nonumber \\
=&
-\phi(\theta)+\phi(\hat{\theta}(r))
+(\theta-\hat{\theta}(r)) \frac{d\phi}{d\theta}(\hat{\theta}(r)) 
= D(W_{\hat{\theta}(r)}\|W_{\theta}).
\end{align*}
Hence, we obtain $(b)$ and $(c)$.
\end{proof}

Due to the expressions in Theorem \ref{T27},
the above upper and lower bounds are $O(1)$-computable.
These also attain the asymptotic tightness in the sense of (H2) and (H3) as follows.
From Lemma \ref{L1-3-1} and Theorem \ref{T27},
we can recover the Hoeffding type evaluation as follows.
\begin{corollary}\cite[Theorem 2]{N}\cite[Theorem 1]{NK}
\begin{align}
\nonumber
&\lim_{n \to \infty}-\frac{1}{n}\log \beta_{e^{-nr}}(W_1^{\times n}\times P_1 \| W_0^{\times n}\times P_0) 
\\
=&  
\sup_{0 \le \theta \le 1} \frac{-\theta  r - \phi(\theta)}{1-\theta} 
= \sup_{0 \le \theta \le 1} 
\frac{\theta ( -r + D_{1-\theta}(W_0\|W_1))}{1-\theta} \nonumber \\
=&\inf_{s>0,\theta \in (0,\hat{\theta}(r))}
D_{1+s}(W_{\theta}\|W_0) 
= D(W_{\hat{\theta}(r)}\|W_0)\nonumber\\
=& \min_{W:D(W\|W_1) \le r} D(W\|W_0).
\Label{5-7}
\end{align}
\end{corollary}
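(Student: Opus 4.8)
The plan is to pass to the limit $n\to\infty$ in the finite-length bounds of Theorem~\ref{T27}, after dividing by $n$, and then to read off the value of the common limit from Lemma~\ref{L1-3-1}. We assume $0<r<D(W_0\|W_1)$, so that $\hat{\theta}(r)\in(0,1)$. Throughout we use $\phi(0)=\phi(1)=0$, the strict convexity of $\phi$ (Lemma~\ref{L1}) --- which makes distinct natural parameters yield distinct transition matrices, and makes $\theta\mapsto D(W_\theta\|W_0)=\theta\frac{d\phi}{d\theta}(\theta)-\phi(\theta)$ nondecreasing on $[0,1)$, its derivative being $\theta\frac{d^2\phi}{d\theta^2}(\theta)\ge0$ --- as well as the monotonicity of $s\mapsto D_{1+s}(W_\theta\|W_0)$ established in Section~\ref{s4}.

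For the lower bound I would use the first inequality of Theorem~\ref{T27}: for each fixed $\theta_0\in[0,1)$, dividing by $n$ and retaining only the $\theta=\theta_0$ term of the supremum gives $-\frac1n\log\beta_{e^{-nr}}(W_1^{\times n}\times P_1\|W_0^{\times n}\times P_0)\ge\bigl(-\theta_0 r-\phi(\theta_0)-\underline{\delta}(\theta_0)/n\bigr)/(1-\theta_0)$. Since $\underline{\delta}(\theta_0)$ is a fixed finite number and $1-\theta_0>0$, letting $n\to\infty$ and then optimizing over $\theta_0$ yields $\liminf_n\bigl(-\frac1n\log\beta_{e^{-nr}}\bigr)\ge\sup_{0\le\theta<1}\frac{-\theta r-\phi(\theta)}{1-\theta}$, which equals $\sup_{0\le\theta\le1}\frac{-\theta r-\phi(\theta)}{1-\theta}$ as the endpoint $\theta=1$ is immaterial. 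For the upper bound I would start from the last expression of Theorem~\ref{T27}: fix $s>0$ and $\theta\in(\hat{\theta}(r),1)$. For $n$ large the argument of the logarithm there is positive; moreover the correction $\frac1s\bigl(\overline{\delta}((1+s)\theta)-(1+s)\underline{\delta}(\theta)\bigr)$ is $O(1)$, and since $D(W_{\hat{\theta}(r)}\|W_\theta)>0$ (because $\theta\neq\hat{\theta}(r)$) the exponential inside the logarithm tends to $0$, so the whole logarithmic term tends to $0$. Dividing by $n$ therefore gives $\limsup_n\bigl(-\frac1n\log\beta_{e^{-nr}}\bigr)\le D_{1+s}(W_\theta\|W_0)$; taking the infimum over $s>0$ and $\theta\in(\hat{\theta}(r),1)$ and using $\inf_{s>0}D_{1+s}(W_\theta\|W_0)=D(W_\theta\|W_0)$ together with the monotonicity of $\theta\mapsto D(W_\theta\|W_0)$ gives $\limsup_n\bigl(-\frac1n\log\beta_{e^{-nr}}\bigr)\le\inf_{\theta\in(\hat{\theta}(r),1)}D(W_\theta\|W_0)=D(W_{\hat{\theta}(r)}\|W_0)$.

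By Lemma~\ref{L1-3-1}, $\sup_{0\le\theta\le1}\frac{-\theta r-\phi(\theta)}{1-\theta}=D(W_{\hat{\theta}(r)}\|W_0)$, so the $\liminf$ and $\limsup$ obtained above coincide; hence the limit exists and equals $D(W_{\hat{\theta}(r)}\|W_0)$, which by the remaining equalities in Lemma~\ref{L1-3-1} equals each of the other quantities in \eqref{5-7}. I expect the only real obstacle to be bookkeeping: controlling the bounds near the endpoint $\theta=1$, where $1-\theta\to0$, and verifying the strict positivity $D(W_{\hat{\theta}(r)}\|W_\theta)>0$ that makes the logarithmic correction in the upper bound vanish in the limit. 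Once these are settled, everything else is a routine interchange of limits with suprema and infima through fixed witnesses.
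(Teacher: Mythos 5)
Your proposal is correct and follows exactly the route the paper intends: the paper gives no separate argument for this corollary beyond ``From Lemma~\ref{L1-3-1} and Theorem~\ref{T27}'', and you supply precisely the missing limit bookkeeping --- fixing witnesses $\theta_0$ (resp.\ $s,\theta$) in the two finite-$n$ bounds, letting the $O(1)$ corrections $\underline{\delta},\overline{\delta}$ and the vanishing logarithmic term (via $D(W_{\hat{\theta}(r)}\|W_{\theta})>0$) disappear after dividing by $n$, and matching the resulting $\liminf$ and $\limsup$ through Lemma~\ref{L1-3-1}. One remark: in computing the upper bound you correctly take the infimum over $\theta\in(\hat{\theta}(r),1)$, where $\theta\mapsto D_{1+s}(W_\theta\|W_0)$ is increasing so the infimum is $D(W_{\hat{\theta}(r)}\|W_0)$; the range $(0,\hat{\theta}(r))$ displayed in the corollary (and in Lemma~\ref{L1-3-1}) appears to be a typo for $(\hat{\theta}(r),1)$, consistent with Theorem~\ref{T27} and with the derivation in \eqref{26-10}, and your argument uses the correct range.
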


\begin{remark}
Natarajan \cite[Theorem 2]{N} 
showed that the exponent (\ref{5-7}) equals $\min_{W:D(W\|W_1) \le r} D(W\|W_0)$.
Nakagawa and Kanaya \cite[Theorem 1]{NK}
showed that the exponent (\ref{5-7}) equals $D(W_{\hat{\theta}(r)}\|W_0)$.
They did not consider other expressions in (\ref{5-7}).
\end{remark}

From Theorem \ref{T27},
we obtain the following moderate deviation type evaluation.
\begin{corollary}\Label{C1-3-1}
For $t \in (0,\frac{1}{2})$, we have
\begin{align}
\Label{1-3-1}
&\lim_{n \to \infty}-\frac{1}{n^{1-2t}}\log \beta_{e^{-n D(W_0\|W_1) + n^{1-t} \delta }}
(W_1^{\times n}\times P_1 \| W_0^{\times n}\times P_0) 
= 
\frac{\delta^2}{2 \frac{d^2\phi}{d \theta^2}(0) }.
\end{align}
That is,
\begin{align}
\Label{1-3-2}
-\log \beta_{e^{- n^{1-2t}r}}( W_0^{\times n}\times P_0 \|W_1^{\times n}\times P_1 ) 
= 
D(W_0\|W_1) n -  \sqrt{2 \frac{d^2\phi}{d \theta^2}(0) r}n^{1-t}+o(n^{1-t}).
\end{align}
\end{corollary}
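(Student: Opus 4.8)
The plan is to read off both \eqref{1-3-1} and \eqref{1-3-2} from the finite-length bounds of Theorem~\ref{T27}, in the same spirit as Corollary~\ref{theorem:general-markov-mdp} was deduced from the tail-probability bounds. The two displays are two parametrisations of the same statement: the optimal tests are Neyman--Pearson likelihood-ratio tests, so \eqref{1-3-2} is obtained from \eqref{1-3-1} by interchanging which of the two errors is held fixed, and I shall concentrate on \eqref{1-3-1}. Recall that under the present choice $g=\log\frac{W_1}{W_0}$ we have $\phi(1)=0$, $\phi'(0)=-D(W_0\|W_1)$, and $\frac{d^2\phi}{d\theta^2}(0)>0$ by Lemma~\ref{L1} since $W_0\neq W_1$. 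I apply Theorem~\ref{T27} with $r=r_n:=D(W_0\|W_1)-\delta n^{-t}$, so that $e^{-nr_n}=e^{-nD(W_0\|W_1)+n^{1-t}\delta}$ and $0<r_n<D(W_0\|W_1)$ for $n$ large. The defining equation \eqref{12-31-1} for $\hat\theta(r_n)$ reads $D(W_\theta\|W_1)=(\theta-1)\phi'(\theta)-\phi(\theta)=r_n$; since its left-hand side is strictly decreasing on $(-\infty,1)$ with value $D(W_0\|W_1)$ and derivative $-\frac{d^2\phi}{d\theta^2}(0)$ at $\theta=0$, a first-order expansion gives $\hat\theta(r_n)=\frac{\delta}{\frac{d^2\phi}{d\theta^2}(0)}n^{-t}+o(n^{-t})\to0$. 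Lemma~\ref{L20} (or a Taylor expansion of \eqref{1-1}) then yields $n^{2t}D(W_{\hat\theta(r_n)}\|W_0)\to\frac{\delta^2}{2\frac{d^2\phi}{d\theta^2}(0)}$.

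For the bound ``$\ge$'' in \eqref{1-3-1} I substitute $\theta=\hat\theta(r_n)$ into the lower bound of Theorem~\ref{T27}; by \eqref{26-10} this equals $nD(W_{\hat\theta(r_n)}\|W_0)-\frac{\underline\delta(\hat\theta(r_n))}{1-\hat\theta(r_n)}$. Dividing by $n^{1-2t}$, the first term tends to $\frac{\delta^2}{2\frac{d^2\phi}{d\theta^2}(0)}$ by the previous paragraph, while $\underline\delta(\hat\theta(r_n))\to0$ by Lemma~\ref{L20f} (because $\hat\theta(r_n)\to0$), so the correction is killed, which gives the desired $\liminf$.

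For the bound ``$\le$'' I use the expression $(c)$ in Theorem~\ref{T27} with a fixed $s>0$ and $\theta=\theta_n:=\hat\theta(r_n)+\frac{\xi}{\frac{d^2\phi}{d\theta^2}(0)}n^{-t}$ for a fixed $\xi>0$, so that $\theta_n\in(\hat\theta(r_n),1)$ for large $n$ and $\theta_n,(1+s)\theta_n\to0$. Then $n^{2t}D_{1+s}(W_{\theta_n}\|W_0)\to\frac{(1+s)(\delta+\xi)^2}{2\frac{d^2\phi}{d\theta^2}(0)}$ by Lemma~\ref{L20}; the term $\frac1s\big(\overline\delta((1+s)\theta_n)-(1+s)\underline\delta(\theta_n)\big)$ is bounded and hence negligible after division by $n^{1-2t}$ by Lemma~\ref{L20f}; and, since $\theta_n-\hat\theta(r_n)=\frac{\xi}{\frac{d^2\phi}{d\theta^2}(0)}n^{-t}$, a Taylor expansion of \eqref{1-1} gives $nD(W_{\hat\theta(r_n)}\|W_{\theta_n})\sim\frac{\xi^2}{2\frac{d^2\phi}{d\theta^2}(0)}n^{1-2t}\to\infty$ because $t<\tfrac12$, so the exponent inside the logarithm tends to $-\infty$ and the logarithmic term vanishes. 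Hence $\limsup_n\frac1{n^{1-2t}}(-\log\beta_{e^{-nr_n}})\le\frac{(1+s)(\delta+\xi)^2}{2\frac{d^2\phi}{d\theta^2}(0)}$; letting $\xi\to0$ and then $s\to0$ closes the gap and proves \eqref{1-3-1}. Finally, \eqref{1-3-2} follows from \eqref{1-3-1} via the Neyman--Pearson duality between the two trade-off curves, or directly by running the same argument on Theorem~\ref{T27} applied to $\beta_{e^{-n^{1-2t}r}}(W_0^{\times n}\times P_0\|W_1^{\times n}\times P_1)$, whose exponential family has potential $\phi(1-\theta)$ and whose $\hat\theta$ now tends to $1$ with $1-\hat\theta\sim\sqrt{2r/\frac{d^2\phi}{d\theta^2}(0)}\,n^{-t}$, so that the leading exponent $nD(W_0\|W_1)$ and the correction $-\sqrt{2\frac{d^2\phi}{d\theta^2}(0)\,r}\,n^{1-t}$ emerge.

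The main difficulty I foresee is entirely in the bookkeeping of the asymptotics: one must justify rigorously the first-order expansion $\hat\theta(r_n)=\frac{\delta}{\frac{d^2\phi}{d\theta^2}(0)}n^{-t}+o(n^{-t})$ from the implicit equation \eqref{12-31-1} (using strict convexity of $\phi$ and continuity of $\frac{d^2\phi}{d\theta^2}$), and then verify that every finite-length correction term — the two occurrences of $\underline\delta$ and $\overline\delta$ and the logarithmic term — is $o(n^{1-2t})$. The only genuine use of the hypothesis $t<\tfrac12$ is in guaranteeing $nD(W_{\hat\theta(r_n)}\|W_{\theta_n})\to\infty$, which is precisely what forces the logarithmic correction to disappear.
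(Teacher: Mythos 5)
Your proposal is correct and follows essentially the same route as the paper: both directions of \eqref{1-3-1} are extracted from Theorem \ref{T27} exactly as in the proof of \eqref{eq:general-markov-mdp-up}, with the lower bound obtained by substituting a near-optimal $\theta$ of order $n^{-t}$ into the supremum and the upper bound from expression $(c)$ with $\theta_n$ shifted by $\xi n^{-t}/\frac{d^2\phi}{d\theta^2}(0)$ followed by $\xi\to0$ and $s\to0$, and \eqref{1-3-2} then read off by inverting the trade-off via $\delta=\sqrt{2\frac{d^2\phi}{d\theta^2}(0)r}$. The only cosmetic difference is that you substitute $\hat\theta(r_n)$ in the lower bound where the paper uses ${\phi'}^{-1}(-r_n)$; the two agree to leading order, so both yield the same limit.
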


\begin{proofof}{Corollary \ref{C1-3-1}}
First, we show (\ref{1-3-1}) in the same way as the proof of (\ref{eq:general-markov-mdp-up}).
(\ref{26-1}) implies
\begin{align*}
& \sup_{0 \le \theta \le 1} 
\frac{n (-\theta  r - \phi(\theta))-
\underline{\delta}(\theta) }{1-\theta}
\ge
\frac{n (-{\phi'}^{-1}(-r)  r - \phi({\phi'}^{-1}(-r)))-
\underline{\delta}({\phi'}^{-1}(-r)) }{1-{\phi'}^{-1}(-r)}\\
=&\frac{n D(W_{{\phi'}^{-1}(-r)} \| W_{0} )- \underline{\delta}({\phi'}^{-1}(-r))}{1-{\phi'}^{-1}(-r)}.
\end{align*}
Now, we choose $r_n:=D(W_0\|W_1) + \delta n^{-t}$.
Then, we have ${\phi'}^{-1}(-r_n)
= \frac{\delta}{2 \frac{d^2\phi}{d\theta^2}(0) n^t}
+o(\frac{1}{n^t})\to 0$.
Thus,
\begin{align}
\frac{n D(W_{{\phi'}^{-1}(-r_n)} \| W_{0} )- \underline{\delta}({\phi'}^{-1}(-r_n))}{1-{\phi'}^{-1}(-r_n)}
\to 
\frac{\delta^2}{2 \frac{d^2\phi}{d\theta^2}(0)}. 
\Label{1-3-3}
\end{align}
Applying (\ref{1-3-3}) to Theorem \ref{T27},
we obtain the part ``$\ge$'' in (\ref{1-3-1}).
 
Next, we choose 
$\theta_n:= {\phi'}^{-1}(a_n)+ \frac{\xi}{n^t} \frac{d^2\phi}{d\theta^2}(0)^{-1}$.
Then, applying the right hand side of $(c)$ of Theorem \ref{T27},
we obtain the part ``$\le$'' in (\ref{1-3-1})
as the same way as the proof of the part ``$\le$'' in (\ref{eq:general-markov-mdp-up}).

Solving $\frac{\delta^2}{2 \frac{d^2\phi}{d \theta^2}(0) }=r
\delta^2$, we have $\delta
=\sqrt{2 \frac{d^2\phi}{d \theta^2}(0) r}$.
Hence, we have (\ref{1-3-2}) from (\ref{1-3-1}).
\end{proofof}

We also have another type evaluation for the second kind of error probability.
\begin{lemma}\Label{L31c}
When we choose $g(x,\bar{x})=\log \frac{W_1(x|\bar{x})}{W_0(x|\bar{x})}$ and $\hat{g}(x)=\log \frac{P_1(x)}{P_0(x)}$, 
we have
\begin{align}
\nonumber
& \sup_a \{ a | W_1^{\times n}\times P_1\{ g^n(X^{n+1}) < a \} \le \epsilon \}  \\
\le &-\log \beta_\epsilon(W_1^{\times n}\times P_1 \| W_0^{\times n}\times P_0)  \nonumber \\
\le & 
\inf_{\delta>0, a} 
\left\{ a -\log \delta \left| 
W_1^{\times n}\times P_1\{ g^n(X^{n+1}) < a \} \ge \epsilon +\delta
\right.\right\}.
\Label{1-3-7}
\end{align}
\end{lemma}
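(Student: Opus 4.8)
The plan is to reduce the bound to a standard information-spectrum characterization of the Neyman--Pearson optimal test, applied to the $n$-fold hypotheses $W_1^{\times n}\times P_1$ versus $W_0^{\times n}\times P_0$. Under the stated choice of $g(x,\bar x)=\log\frac{W_1(x|\bar x)}{W_0(x|\bar x)}$ and $\hat g(x)=\log\frac{P_1(x)}{P_0(x)}$, the log-likelihood ratio between the two product distributions is exactly $\tilde g^n(X^{n+1}) = g^n(X^{n+1})$ (absorbing $\hat g$ into the initial-distribution term $h$), so the Neyman--Pearson lemma says the optimal test for a given size $\epsilon$ is a threshold test on $g^n(X^{n+1})$, randomized at the boundary. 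The key identity to exploit is the standard two-sided bound relating $\beta_\epsilon$ to the cumulative distribution function of the log-likelihood ratio under $W_1^{\times n}\times P_1$: for any threshold $a$, the test that rejects $W_0$ when $g^n(X^{n+1})\ge a$ has first-kind error $W_1^{\times n}\times P_1\{g^n(X^{n+1})<a\}$ (up to the boundary term) and second-kind error at most $e^{-a}$, since on the acceptance region the likelihood ratio is bounded.

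First I would establish the upper bound (right inequality) in \eqref{1-3-7}. Fix $\delta>0$ and $a$ with $W_1^{\times n}\times P_1\{g^n(X^{n+1})<a\}\ge \epsilon+\delta$. Consider the (possibly randomized) test that rejects $W_0$ on $\{g^n \ge a\}$ and additionally rejects on part of $\{g^n < a\}$ so as to bring the first-kind error down to exactly $\epsilon$; since the probability mass of $\{g^n<a\}$ under $W_1^{\times n}\times P_1$ exceeds $\epsilon+\delta$, at least mass $\delta$ of that region is accepted. On this accepted part, $\frac{d(W_0^{\times n}\times P_0)}{d(W_1^{\times n}\times P_1)} = e^{-g^n(X^{n+1})} > e^{-a}$, so the second-kind error of this test is at least $\delta e^{-a}$; wait---I need the opposite direction. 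The point is that $\beta_\epsilon$ is the \emph{minimum} second-kind error, so I bound it from above by exhibiting a specific test: take the threshold test rejecting on $\{g^n\ge a'\}$ for the $a'$ achieving size exactly $\epsilon$; its second-kind error is $W_0^{\times n}\times P_0\{g^n\ge a'\}\le e^{-a'}\cdot(\text{something})$, and I use the hypothesis on $a$ to control $a'$ from below via $a' \ge a$-type reasoning together with the $\delta$ slack, yielding $\beta_\epsilon \le e^{-a}/\delta$, i.e. $-\log\beta_\epsilon \le a - \log\delta$.

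Then I would establish the lower bound (left inequality): for any test $T$ with $\mathsf{E}_{W_1^{\times n}\times P_1}[T]\le\epsilon$ and any $a$ with $W_1^{\times n}\times P_1\{g^n(X^{n+1})<a\}\le\epsilon$, decompose $\mathsf{E}_{W_0^{\times n}\times P_0}[1-T]$ over the events $\{g^n<a\}$ and $\{g^n\ge a\}$; on the first event the density ratio exceeds $e^{-a}$... again I want $-\log\beta_\epsilon \ge a$. The correct move: on $\{g^n<a\}$ the likelihood ratio $W_1/W_0 < e^a$, so $W_0^{\times n}\times P_0(\cdot) > e^{-a}W_1^{\times n}\times P_1(\cdot)$ there; since the accept region of the optimal test must contain (in a majorization sense) the low-likelihood-ratio region and $W_1^{\times n}\times P_1\{g^n<a\}\le\epsilon$ forces all of $\{g^n<a\}$ to be acceptable, we get $\beta_\epsilon = \mathsf{E}_{W_0^{\times n}\times P_0}[1-T] \ge W_0^{\times n}\times P_0\{g^n<a, T=0\} \ge e^{-a}\,(\epsilon - \mathsf{E}_{W_1}[T\mathbf{1}\{g^n<a\}])$, but more simply, taking the optimal threshold test directly, $\beta_\epsilon \ge W_0^{\times n}\times P_0\{g^n\le a\}\ge e^{-a}W_1^{\times n}\times P_1\{g^n\le a\}$ — hmm, this needs care. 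Taking $\sup$ over admissible $a$ and $\inf$ over admissible $(\delta,a)$ then gives \eqref{1-3-7}.

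The main obstacle is handling the boundary/randomization carefully so that the two one-sided estimates are genuinely tight: the Neyman--Pearson test is a threshold test that is randomized precisely on the level set $\{g^n = a'\}$, and getting the exact size $\epsilon$ requires splitting that level set, which is where the $\log\delta$ loss enters the upper bound and where the strict-versus-non-strict inequality ($g^n<a$ versus $g^n\le a$) in the lower bound must be tracked. I expect the calculation itself to be short once the Neyman--Pearson reduction is in place; the subtlety is purely in the measure-theoretic bookkeeping of the boundary mass, which the $\delta$-slack in the statement is designed to absorb.
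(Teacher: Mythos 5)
Your proposal has the right raw ingredients (threshold sets of the log-likelihood ratio, change of measure between $W_1^{\times n}\times P_1$ and $W_0^{\times n}\times P_0$), but the two inequalities of \eqref{1-3-7} are attacked with the wrong type of argument in each case, and neither direction is actually closed. The right-hand inequality, $-\log \beta_\epsilon \le a-\log\delta$, is equivalent to the \emph{lower} bound $\beta_\epsilon \ge \delta e^{-a}$, which must hold for \emph{every} feasible test; it cannot be obtained ``by exhibiting a specific test,'' as you propose, and the chain you write ($\beta_\epsilon \le e^{-a}/\delta$, hence $-\log\beta_\epsilon \le a-\log\delta$) is not even internally consistent in sign. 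Symmetrically, the left-hand inequality $\sup_a\{\cdots\}\le -\log\beta_\epsilon$ is equivalent to the \emph{upper} bound $\beta_\epsilon\le e^{-a}$ and is the one proved by exhibiting a concrete test; yet in that part you set out to prove $\beta_\epsilon \ge (\cdots)$. You flag the confusion three times (``wait---I need the opposite direction,'' etc.) but never resolve it, so as written no inequality of \eqref{1-3-7} follows.

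The paper's route (Lemma \ref{L31}, to which Lemma \ref{L31c} reduces by substituting $W_i^{\times n}\times P_i$ for $P_i$) avoids any appeal to the structure of the Neyman--Pearson optimal test or to boundary randomization. For achievability it takes $T_a=\mathbf{1}\{\log(P_1/P_0)<a\}$, which is feasible when $P_1\{\log(P_1/P_0)<a\}\le\epsilon$, and bounds $\mathsf{E}_{P_0}[1-T_a]=P_0\{\log(P_1/P_0)\ge a\}\le e^{-a}$ by change of measure. For the converse it uses that $T\mapsto \mathsf{E}_{P_1}[T]+e^a\mathsf{E}_{P_0}[1-T]$ is minimized over all $[0,1]$-valued tests by $T_a$, so that for any feasible $T$ one has $\epsilon+e^a\mathsf{E}_{P_0}[1-T]\ge \mathsf{E}_{P_1}[T_a]+e^a\mathsf{E}_{P_0}[1-T_a]\ge P_1\{\log(P_1/P_0)<a\}\ge\epsilon+\delta$, whence $\mathsf{E}_{P_0}[1-T]\ge \delta e^{-a}$ uniformly in $T$. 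This last variational step is the key idea missing from your sketch; the $\delta$-slack enters there, not through splitting a level set of the optimal test. If you restructure your argument so that the explicit test proves the left inequality and the uniform-over-tests variational bound proves the right one, the rest of your reduction goes through.
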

Lemma \ref{L31c} can be shown by substituting 
$W_i^{\times n}\times P_i$ into $P_i$ ($i=0,1$) 
in Lemma \ref{L31} in Appendix \ref{s3}.

Combining (\ref{1-3-6}) and (\ref{1-3-7}),
we can derive lower and upper $O(1)$-computable bounds
for $\beta_\epsilon(W_1^{\times n}\times P_1 \| W_0^{\times n}\times P_0)$.
Applying Corollary \ref{Co1} to the random variable 
$\log \frac{W_1^n \times P_1(X^{n+1})}{W_0^n \times P_0(X^{n+1})} $
in Lemma \ref{L31c}, we obtain the Stein-Strassen type evaluation.
That is, these bounds attain the asymptotic tightness in the sense of (H1).

\begin{theorem}
\begin{align*}
-\log \beta_{\epsilon}(W_1^{\times n}\times P_1 \| W_0^{\times n}\times P_0) 
= n D(W_1\|W_0)+ \sqrt{n} \sqrt{\frac{d^2 \phi}{d\theta^2}(1)} \Phi^{-1}(\epsilon)+o(\sqrt{n}).
\end{align*}
\end{theorem}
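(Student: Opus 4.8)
The plan is to combine the two-sided bound of Lemma~\ref{L31c} with the central limit theorem for Markov chains (Corollary~\ref{Co1}), applied to the log-likelihood ratio
\[
L_n := \log \frac{W_1^{\times n}\times P_1(X^{n+1})}{W_0^{\times n}\times P_0(X^{n+1})}
= \tilde g^n(X^{n+1}),
\]
where the right-hand side uses the generator $g(x,\bar x)=\log\frac{W_1(x|\bar x)}{W_0(x|\bar x)}$ together with the initial term $h(\bar x)=\log\frac{P_1(\bar x)}{P_0(\bar x)}$, and $L_n$ is exactly the random variable appearing in Lemma~\ref{L31c}. By that lemma it suffices, for suitably chosen thresholds $a$, to control the distribution function $W_1^{\times n}\times P_1\{L_n < a\}$, that is, the law of $L_n$ when the chain is generated by the ergodic matrix $W_1$ with initial distribution $P_1$.

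First I would determine the mean and variance rates of $L_n$ under $W_1^{\times n}\times P_1$. Since $W_1$ is the member $W_\theta$ of the exponential family with generator $g$ at $\theta=1$ and $\phi(1)=0$, a diagonal similarity transform shows that the potential function of the exponential family with generator $g$ containing $W_1$ is $\theta\mapsto\phi(\theta+1)$. Applying Corollary~\ref{Co1} with $W_1$ as the base transition matrix (which is again irreducible and ergodic, and Corollary~\ref{Co1} permits an arbitrary initial distribution), the mean rate of $L_n/n$ is $\phi'(1)$, which equals $D(W_1\|W_0)$ by Lemma~\ref{L7} together with $\phi(0)=\phi(1)=0$, and the asymptotic variance is $\phi''(1)$. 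Hence, for every real $\delta$,
\[
\lim_{n\to\infty} W_1^{\times n}\times P_1\Bigl\{ L_n - n D(W_1\|W_0) \le \sqrt n\,\delta \Bigr\}
= \Phi\!\left(\frac{\delta}{\sqrt{\phi''(1)}}\right).
\]

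Finally I would feed this into Lemma~\ref{L31c}. For the lower bound, fix $\zeta>0$ and set $a_n := nD(W_1\|W_0)+\sqrt n\,\sqrt{\phi''(1)}\,(\Phi^{-1}(\epsilon)-\zeta)$; the displayed limit gives $W_1^{\times n}\times P_1\{L_n<a_n\}\to\Phi(\Phi^{-1}(\epsilon)-\zeta)<\epsilon$, so the constraint in the lower bound of Lemma~\ref{L31c} holds for all large $n$ and therefore $-\log\beta_\epsilon\ge a_n$; letting $\zeta\to0$ gives the lower half of the expansion. For the upper bound, fix $\delta>0$ and $\zeta>0$ and set $a_n := nD(W_1\|W_0)+\sqrt n\,\sqrt{\phi''(1)}\,(\Phi^{-1}(\epsilon+\delta)+\zeta)$; then $W_1^{\times n}\times P_1\{L_n<a_n\}\to\Phi(\Phi^{-1}(\epsilon+\delta)+\zeta)>\epsilon+\delta$, so the upper bound of Lemma~\ref{L31c} gives $-\log\beta_\epsilon\le a_n-\log\delta$; since $-\log\delta=O(1)=o(\sqrt n)$, letting $\zeta\to0$ and then $\delta\to0$ (using continuity of $\Phi^{-1}$) matches the lower half. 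Combining the two yields the stated asymptotic expansion. The main obstacle is the second step: correctly identifying the asymptotic variance of $L_n$ under $W_1$ as $\phi''(1)$ rather than an expression through $\phi''(0)$, which becomes transparent once the potential of the $W_1$-centred exponential family is recognized as the shift $\phi(\,\cdot\,+1)$; the bounded initial term $h(X_1)$ is already incorporated in Corollary~\ref{Co1} and needs no separate treatment.
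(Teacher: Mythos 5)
Your proposal is correct and follows essentially the same route as the paper, which proves this theorem in a single sentence by applying Corollary \ref{Co1} to the log-likelihood ratio inside Lemma \ref{L31c}. You supply the details the paper omits --- in particular the observation that the potential of the exponential family centred at $W_1$ is $\theta\mapsto\phi(\theta+1)$, so that the mean and variance rates under $W_1^{\times n}\times P_1$ are $\phi'(1)=D(W_1\|W_0)$ and $\phi''(1)$ --- and these details are right.
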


\section{Conclusion}\Label{s9}
We have derived upper and lower 
$O(1)$-computable
bounds of the cumulant generating function 
of the Markov chain by using the convex function $\phi(\theta)$.
Using these bounds, we have given an simple alternative proof of the
central limit theorem of the sample mean in the Markovian chain.
Also, using these bounds, we have derived upper and lower $O(1)$-computable bounds of the tail probability of the sample mean,
which attains the asymptotic tightness in the sense of (T2) and (T3).
Using the above upper and lower bounds,
we have derived upper and lower $O(1)$-computable
bounds of the minimum error probability of the 2nd kind of error 
under the constraint for the error probability of the 1st kind of error,
which attains the asymptotic tightness in the sense of (H2) and (H3).
These bounds have not been derived even in the independently and identically distributed case.
We have also derived other upper and lower $O(1)$-computable
bounds 
that attains the asymptotic tightness in the sense of (H1).

However, in this paper, we have assumed that 
our system consists of finite elements.
Indeed, 
the existing papers \cite{C1,C2,C3} reported 
several difficulties to evaluate the tail probability
of the sample mean in the continuous probability space 
even with the discrete time Markov chain.
So, it is remained as a challenging problem 
to extend the obtained results to the continuous case. 
This extension will enable us to handle several 
Gaussian Markovian chains in a simple way. 
Further, the obtained bounds are useful for several topics in information theory \cite{W-H2}.

\section*{Acknowledgment}
MH is partially supported by a MEXT Grant-in-Aid for Scientific Research (A) No. 23246071. 
MH is also partially supported by the National Institute of Information and Communication Technology (NICT), Japan.
SW is partially supported by JSPS Postdoctoral Fellowships for Research Abroad.
The Centre for Quantum Technologies is funded by the Singapore Ministry of Education and the National Research Foundation as part of the Research Centres of Excellence programme.

\appendix

\section{Exponential family of distributions}\Label{s3}
In this appendix, we discuss several formulas in an exponential family of distributions 
$\{P_\theta\}$
with single observation
when $P_\theta(x):= P(x) e^{\theta x-\phi(\theta)}$
with cumulant generating function $\phi(\theta):=\log \sum_x P(x) [e^{\theta x}]$.

The exponential families of transition matrices
contain
exponential families of distributions 
by considering the family of transition matrices 
$W_{\theta}(x|\bar{x}):=P_\theta(x)$ from the family of distributions $P_\theta$.
Hence, the definitions and the notations given in Section \ref{s4} 
are applied to 
the exponential family of distributions 
$\{P_\theta\}$ in the following.

\subsection{Tail probability}
First, we define the relative entropy and the relative R\'{e}nyi entropy between two distributions $P$ and $\bar{P}$
are given as
\begin{align}
D(P\|\bar{P}) &:=
\sum_x P(x) \log \frac{P(x)}{\bar{P}(x)} \Label{28-1} \\
D_{1+s}(P\|\bar{P}) &:=
\frac{1}{s}\log \sum_x P(x)^{1+s} \bar{P}(x)^{-s} .
\Label{28-2}
\end{align}
Using the cumulant generating function $\phi(\theta)$,
we investigate the lower bound on the tail probability as follows.
The following lower bound on the tail probability is nothing but 
Cram\'er's theorem in the large deviation theory \cite{DZ}. 
\begin{proposition} \Label{P1}
For any $a > \mathsf{E}[X]$, we have
\begin{eqnarray*}
- \log P_0\{ X \ge a \} \ge \sup_{\theta \ge 0}[ \theta a - \phi(\theta) ] 
= {\phi'}^{-1}(a) a - \phi({\phi'}^{-1}(a)) 
= D(P_{{\phi'}^{-1}(a)} \| P_0 ).
\end{eqnarray*}
Similarly, for $a < \mathsf{E}[X]$, we have
\begin{eqnarray*}
- \log P_0\{ X \le a \} \ge \sup_{\theta \le 0}[ \theta a - \phi(\theta)] 
= {\phi'}^{-1}(a) a - \phi({\phi'}^{-1}(a)) 
= D(P_{{\phi'}^{-1}(a)} \| P_0).
\end{eqnarray*}
\end{proposition}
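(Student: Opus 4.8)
The plan is to run the standard Chernoff/Markov exponential-tilting argument, then invoke convexity of $\phi$ to evaluate the Legendre transform, and finally identify the resulting quantity with a relative entropy by specializing Lemma~\ref{L7}.

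First I would fix $\theta \ge 0$ and apply Markov's inequality to the nonnegative random variable $e^{\theta X}$: since $\{X \ge a\} \subseteq \{e^{\theta X} \ge e^{\theta a}\}$, one gets $P_0\{X \ge a\} \le e^{-\theta a}\mathsf{E}_0[e^{\theta X}] = e^{-\theta a + \phi(\theta)}$. Taking $-\log$ and maximizing over $\theta \ge 0$ yields $-\log P_0\{X \ge a\} \ge \sup_{\theta \ge 0}[\theta a - \phi(\theta)]$. This is the only probabilistic input; everything afterwards is deterministic convex analysis.

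Next I would evaluate that supremum. Since $\phi$ is strictly convex by Lemma~\ref{L1} and $\phi'(0) = \eta(0) = \mathsf{E}[X] < a$, the increasing function $\phi'$ attains the value $a$ at the point ${\phi'}^{-1}(a) > 0$; hence the unconstrained maximizer of the concave map $\theta \mapsto \theta a - \phi(\theta)$ already lies in the feasible region $\theta \ge 0$, so $\sup_{\theta \ge 0}[\theta a - \phi(\theta)] = {\phi'}^{-1}(a)\, a - \phi({\phi'}^{-1}(a))$. Finally, specializing Lemma~\ref{L7} to the degenerate transition matrices $W_\theta(x|\bar{x}) := P_\theta(x)$, taking $\bar\theta = 0$ and using $\phi(0) = 0$, gives $D(P_\theta \| P_0) = \theta\,\phi'(\theta) - \phi(\theta)$; putting $\theta = {\phi'}^{-1}(a)$ turns the right-hand side into ${\phi'}^{-1}(a)\,a - \phi({\phi'}^{-1}(a))$, which is exactly the claimed chain of equalities.

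For the lower tail $a < \mathsf{E}[X]$ the argument is symmetric: one takes $\theta \le 0$, uses $\{X \le a\} \subseteq \{e^{\theta X} \ge e^{\theta a}\}$ (the inclusion now valid because $\theta$ is nonpositive), and optimizes over $\theta \le 0$; here ${\phi'}^{-1}(a) < 0$ sits in the feasible region, and Lemma~\ref{L7} again supplies the relative-entropy form. I do not expect a genuine obstacle here; the only points requiring care are checking that the Legendre maximizer falls on the correct side of $0$ (which is precisely where strict convexity of $\phi$ and the hypothesis relating $a$ to $\mathsf{E}[X]$ are used) and the bookkeeping that rewrites the Legendre value as $D(P_{{\phi'}^{-1}(a)} \| P_0)$.
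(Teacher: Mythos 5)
Your proof is correct and is exactly the standard Chernoff/exponential-tilting argument that the paper itself invokes by simply citing Cram\'er's theorem without writing out the details; the evaluation of the Legendre transform and the identification with $D(P_{{\phi'}^{-1}(a)}\|P_0)$ via Lemma \ref{L7} (or a direct computation) match what the paper intends. No gaps.
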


By using the monotonicity of the R\'enyi relative entropy \cite{Csiszar}, we can derive the following converse bound.
\begin{theorem} \Label{T1}
For any $a > \mathsf{E}[X]$, we have
\begin{align}
\lefteqn{ - \log P\{ X \ge a \} } \nonumber \\
\stackrel{(a)}{\le} & \inf_{s > 0 \atop \theta \in \mathbb{R}, {\bar{\theta}} \le 0}
 \left[ \phi((1+s)\theta) - (1+s)\phi(\theta) 
 - (1+s) \log \left(1- e^{ - [{\bar{\theta}} a - \phi(\theta+{\bar{\theta}}) + \phi(\theta) ] } \right) \right] / s
\nonumber \\
\stackrel{(b)}{\le} & \inf_{s > 0 \atop \theta > {\phi'}^{-1}(a)} \left[
 \phi((1+s)\theta) - (1+s)\phi(\theta) 
 - (1+s) \log \left(1- e^{(\theta - {\phi'}^{-1}(a)) a + \phi({\phi'}^{-1}(a)) - \phi(\theta)} \right) \right] / s 
\nonumber \\
\stackrel{(c)}{=} & \inf_{s > 0 \atop \theta > {\phi'}^{-1}(a)} 
D_{1+s}(P_\theta\|P_0)
-\frac{1+s}{s} \log \left(1- e^{-D(P_{{\phi'}^{-1}(a)}\|P_{\theta})} \right) . 
  \nonumber
\end{align}
Similarly, for any $a < \mathsf{E}[X]$, we  have
\begin{align}
\lefteqn{ - \log P\{ X \le a \} } \nonumber \\
\stackrel{(d)}{\le}
& \inf_{s > 0 \atop \theta \in \mathbb{R}, {\bar{\theta}} \ge 0}
 \left[ \phi((1+s)\theta) - (1+s)\phi(\theta) 
 - (1+s) \log \left(1- e^{ - [{\bar{\theta}} a - \phi(\theta+{\bar{\theta}}) + \phi(\theta) ] } \right) \right] / s
\nonumber \\
\stackrel{(e)}{\le}
& \inf_{s > 0 \atop \theta < {\phi'}^{-1}(a)} \left[
 \phi((1+s)\theta) - (1+s)\phi(\theta) 
 - (1+s) \log \left(1- e^{(\theta - {\phi'}^{-1}(a)) a + \phi({\phi'}^{-1}(a)) - \phi(\theta)} \right) \right] / s 
\nonumber
\\
\stackrel{(f)}{=}
& \inf_{s > 0 \atop \theta < {\phi'}^{-1}(a)} 
D_{1+s}(P_\theta\|P_0)
-\frac{1+s}{s} \log \left(1- e^{-D(P_{{\phi'}^{-1}(a)}\|P_{\theta})} \right) . 
\nonumber
\end{align}
\end{theorem}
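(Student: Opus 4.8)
The plan is to combine a change-of-measure step based on the data-processing (monotonicity) inequality for the Rényi relative entropy with the Chernoff-type bound of Proposition~\ref{P1}. I will establish $(a)$, $(b)$, $(c)$ for the upper tail; the lower-tail bounds $(d)$--$(f)$ are entirely symmetric. \emph{Coarse-graining step.} Fix $s>0$ and $\theta\in\mathbb{R}$, and put $S:=\{x\in\cX:\ x\ge a\}$. The two-valued map recording whether $X\in S$ is a stochastic map, so monotonicity of $D_{1+s}$ under such maps \cite{Csiszar}, applied to the pair $(P_\theta,P_0)$, gives
\begin{align*}
e^{s D_{1+s}(P_\theta\|P_0)}
\ge P_\theta(S)^{1+s}P_0(S)^{-s}+P_\theta(S^c)^{1+s}P_0(S^c)^{-s}
\ge P_\theta(S)^{1+s}P_0(S)^{-s},
\end{align*}
and rearranging and taking $s$-th roots yields
\begin{align*}
-\log P_0\{X\ge a\}\ \le\ D_{1+s}(P_\theta\|P_0)-\frac{1+s}{s}\log P_\theta\{X\ge a\}.
\end{align*}

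\emph{Lower bound on the tilted tail, and $(a)$.} For $\bar\theta\le 0$, Markov's inequality applied to $e^{\bar\theta X}$ under $P_\theta$ — equivalently, Proposition~\ref{P1} applied to the translated exponential family with cumulant generating function $\bar\theta\mapsto\phi(\theta+\bar\theta)-\phi(\theta)$ — gives $P_\theta\{X\le a\}\le e^{-[\bar\theta a-\phi(\theta+\bar\theta)+\phi(\theta)]}$, hence $P_\theta\{X\ge a\}\ge 1-e^{-[\bar\theta a-\phi(\theta+\bar\theta)+\phi(\theta)]}$ (using $\{X\ge a\}\supseteq\{X\le a\}^c$). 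Substituting this into the display above, together with the distribution version of Lemma~\ref{L7} (with $\phi(0)=0$), which says $s D_{1+s}(P_\theta\|P_0)=\phi((1+s)\theta)-(1+s)\phi(\theta)$, produces inequality $(a)$ after taking the infimum over $s>0$, $\theta\in\mathbb{R}$, $\bar\theta\le 0$.

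\emph{Specialization to $(b)$ and rewriting to $(c)$.} For $(b)$ I restrict to $\theta>{\phi'}^{-1}(a)$ and choose the $\bar\theta\le 0$ that maximizes $\bar\theta a-\phi(\theta+\bar\theta)+\phi(\theta)$, namely $\bar\theta={\phi'}^{-1}(a)-\theta$, which is $\le 0$ precisely because $\theta>{\phi'}^{-1}(a)$; the values $\theta\le{\phi'}^{-1}(a)$ dropped here only produce the vacuous bound $+\infty$, so $(a)\le(b)$ indeed holds. Then $(c)$ is a rewriting via Lemma~\ref{L7}: $\frac1s[\phi((1+s)\theta)-(1+s)\phi(\theta)]=D_{1+s}(P_\theta\|P_0)$, and $(\theta-{\phi'}^{-1}(a))a+\phi({\phi'}^{-1}(a))-\phi(\theta)=-D(P_{{\phi'}^{-1}(a)}\|P_\theta)$. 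For the lower tail one runs the same three steps with $S=\{x\le a\}$, $\bar\theta\ge 0$, and $\theta<{\phi'}^{-1}(a)$.

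\emph{Main obstacle.} The coarse-graining and Markov estimates are routine; the delicate part is the bookkeeping of the admissible ranges of $s,\theta,\bar\theta$. The term $-\frac{1+s}{s}\log P_\theta\{X\ge a\}$ is only meaningful when $P_\theta\{X\ge a\}>0$, and the lower bound $1-e^{-[\cdots]}$ is strictly positive only for $\theta>{\phi'}^{-1}(a)$ together with a suitable $\bar\theta<0$; one has to check that the optima of $(a)$ effectively live in this region (so that restricting to it for $(b)$ is lossless) and that the $\ge$ versus $>$ distinction in the events $\{X\ge a\}$ causes no trouble. I expect this range analysis, rather than any inequality, to be where the care is needed.
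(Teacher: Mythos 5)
Your proposal is correct and follows essentially the same route as the paper's proof: the monotonicity of the R\'enyi relative entropy under the coarse-graining to $\{X\ge a\}$ gives $-\log P_0\{X\ge a\}\le D_{1+s}(P_\theta\|P_0)-\frac{1+s}{s}\log P_\theta\{X\ge a\}$, the tilted tail is lower-bounded by a Chernoff/Markov estimate with parameter $\bar\theta\le 0$, and $(b)$, $(c)$ follow by restricting to $\theta>{\phi'}^{-1}(a)$ and taking the optimal $\bar\theta={\phi'}^{-1}(a)-\theta$, exactly as in the paper. The range bookkeeping you flag is handled the same way there (bounds outside the useful region are simply vacuous), so no further work is needed.
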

\begin{proof}
We only show $(a)$-$(c)$.
We can show $(d)$-$(f)$
almost in a similar manner.
For arbitrary $\theta \in \mathbb{R}$, 
we set $\alpha := P\{ X \ge a\}$ and $\beta := P_{\theta}\{ X \ge a \}$.
Then, by the monotonicity of the R\'enyi relative entropy 
\cite{Csiszar}, we have
\begin{eqnarray*}
D_{1+s}(P_{\theta}\|P) \ge \frac{1}{s}\log\left[ \beta^{1+s} \alpha^{-s} + (1-\beta)^{1+s} (1-\alpha)^{-s}\right] 
\ge \frac{1}{s} \log \beta^{1+s} \alpha^{-s}.
\end{eqnarray*}
Thus, we have
\begin{eqnarray*}
- \log \alpha \le \frac{\phi((1+s)\theta) - (1+s)\phi(\theta) - (1+s)\log \beta}{s}.
\end{eqnarray*}

Now, for any ${\bar{\theta}} \le 0$, we have
\begin{align}
& 1 - \beta
= P_{\theta}\{ X < a \} 
\le \sum_x P_{\theta}(x) e^{{\bar{\theta}}(x-a)} \nonumber \\
=& \sum_x P(x) e^{(\theta+{\bar{\theta}})x - {\bar{\theta}} a - \phi(\theta)} 
= e^{- [{\bar{\theta}} a - \phi(\theta+{\bar{\theta}}) + \phi(\theta)] }.\nonumber
\end{align}
Thus,  $- \log \alpha \le f(s,\theta,\bar{\theta})$, 
where $f(s,\theta,\bar{\theta})$ is the function inside of the RHS of $(a)$.
Hence, we have $(a)$. 

Restricting the range of $\theta$ as $\theta > {\phi'}^{-1}(a)$,
we have
$\inf_{s > 0, \theta \in \mathbb{R}, {\bar{\theta}} \ge 0}
f(s,\theta,\bar{\theta})
\le
\inf_{s > 0, \theta > {\phi'}^{-1}(a), {\bar{\theta}} \ge 0}
f(s,\theta,\bar{\theta})$.
This restriction yields 
\begin{eqnarray*}
\sup_{{\bar{\theta}} \le 0}  [{\bar{\theta}} a - \phi(\theta+{\bar{\theta}}) + \phi(\theta)] 
 = ({\phi'}^{-1}(a)-\theta) a - \phi({\phi'}^{-1}(a)) + \phi(\theta),
\end{eqnarray*}
which is achieved by ${\bar{\theta}} = {\phi'}^{-1}(a) - \theta$.
Thus, 
since $\inf_{s > 0 , \theta > {\phi'}^{-1}(a), {\bar{\theta}} \ge 0}
f(s,\theta,\bar{\theta})$
equals the RHS of $(b)$, we have $(b)$.
Furthermore, $(c)$
can be obtained from the relations \eqref{28-1} and \eqref{28-2}.
\end{proof}

\subsection{Simple hypothesis testing}
For simple hypothesis testing, we have the following lemma for the null and alternative hypotheses are $P_0$ and $P_1$.
In fact, when two distributions $P$ and $Q$ are given on the probability space ${\cal X}$, the one-parametric exponential family
$P_\theta$ generated by the random variable $Y:=\log \frac{Q(X)}{P(X)}$ 
satisfies that $ P_0=P$ and $P_1=Q$.
Hence, the above case covers the most general setting for the binary hypothesis testing.

\begin{lemma}\Label{L31}
\begin{align*}
& \sup_a \{ a | P_1\{ \log \frac{P_1(x)}{P_0(x)} < a \} \le \epsilon \} 
\le -\log \beta_\epsilon(P_1 \| P_0) \nonumber \\
\le & 
\inf_{\delta>0, a} \left\{ a -\log \delta \left| 
P_1\{ \log \frac{P_1(x)}{P_0(x)} < a \} \ge \epsilon +\delta
\right.\right\}.
\end{align*}
\end{lemma}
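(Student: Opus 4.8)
The plan is to prove both inequalities by elementary Neyman--Pearson-type arguments, using nothing beyond the change-of-measure estimate $P_0(x)\le e^{-a}P_1(x)$, valid on the event $\{\log(P_1(x)/P_0(x))\ge a\}$, together with its reverse $P_0(x)\ge e^{-a}P_1(x)$ on the complementary event. Throughout write $L(x):=\log\frac{P_1(x)}{P_0(x)}$.

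For the left inequality I would fix an arbitrary $a$ with $P_1\{L<a\}\le\epsilon$ and consider the deterministic likelihood-ratio test $T:=\mathbf{1}[L<a]$. Since $\mathsf{E}_{P_1}[T]=P_1\{L<a\}\le\epsilon$, this $T$ is feasible, so $\beta_\epsilon(P_1\|P_0)\le\mathsf{E}_{P_0}[1-T]=P_0\{L\ge a\}$. On $\{L\ge a\}$ one has $P_0(x)\le e^{-a}P_1(x)$, whence $P_0\{L\ge a\}\le e^{-a}P_1\{L\ge a\}\le e^{-a}$. Hence $-\log\beta_\epsilon(P_1\|P_0)\ge a$, and taking the supremum over admissible $a$ gives the first inequality.

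For the right inequality I would fix $\delta>0$ and $a$ with $P_1\{L<a\}\ge\epsilon+\delta$, let $T$ be an arbitrary feasible test (so $\mathsf{E}_{P_1}[T]\le\epsilon$), and restrict the expectation defining the second error probability to the event $\{L<a\}$, on which $P_0(x)\ge e^{-a}P_1(x)$:
\[
\mathsf{E}_{P_0}[1-T]\ge e^{-a}\sum_{x:\,L(x)<a}P_1(x)(1-T(x))\ge e^{-a}\bigl(P_1\{L<a\}-\mathsf{E}_{P_1}[T]\bigr)\ge e^{-a}\delta.
\]
Since $T$ ranges over all feasible tests, $\beta_\epsilon(P_1\|P_0)\ge\delta e^{-a}$, so $-\log\beta_\epsilon(P_1\|P_0)\le a-\log\delta$; taking the infimum over all such $(\delta,a)$ finishes the proof, and the main-text statement (Lemma~\ref{L31c}) then follows by substituting $W_i^{\times n}\times P_i$ for $P_i$.

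I do not expect a genuine obstacle here. The only points requiring care are the handling of ties in the likelihood ratio --- which is automatic, since randomized tests are permitted and the supremum and infimum are both phrased through the strict event $\{L<a\}$ --- and keeping the strict versus non-strict inequalities matched to the direction of each change-of-measure estimate (as well as to the optimality direction of the likelihood-ratio test).
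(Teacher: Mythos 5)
Your proof is correct and follows essentially the same route as the paper: the first inequality is the identical change-of-measure bound $P_0\{L\ge a\}\le e^{-a}P_1\{L\ge a\}\le e^{-a}$ applied to the feasible test $\mathbf{1}[L<a]$, and your second inequality is just a directly unpacked version of the paper's Neyman--Pearson-type estimate $\mathsf{E}_{P_1}[T]+e^{a}\mathsf{E}_{P_0}[1-T]\ge \mathsf{E}_{P_1}[T_a]+e^{a}\mathsf{E}_{P_0}[1-T_a]\ge\epsilon+\delta$, obtained by restricting the expectation to $\{L<a\}$. No gaps.
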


\begin{proof}
Let $S_a$ be the set 
$\{ \log \frac{P_1(x)}{P_0(x)} < a \}
=\{ P_1(x) < e^a P_0(x) \}$
and $T_a$ be the test function with the support $S_a$.
When $\mathsf{E}_{P_1}[T_a] \le \epsilon$, 
\begin{align}
& e^{-a} \ge e^{-a}P_1\{ \log \frac{P_1(x)}{P_0(x)} \ge a \}
\ge P_0\{ \log \frac{P_1(x)}{P_0(x)} \ge a \} \\
=& \mathsf{E}_{P_0}[1-T_a] \ge \beta_\epsilon(P_1 \| P_0).
\end{align}
Taking the logarithm, we have
\begin{align}
a \le -\log \beta_\epsilon(P_1 \| P_0) .
\end{align}
Taking the supremum for $a$, we obtain the first inequality. 

Assume that $P_1\{ \log \frac{P_1(x)}{P_0(x)} < a \} \ge \epsilon +\delta$.
We have
\begin{align}
\epsilon + e^a \mathsf{E}_{P_0}[1-T]
=
\mathsf{E}_{P_1} [T] + e^a \mathsf{E}_{P_0}[1-T]
\ge
\mathsf{E}_{P_1} [T_a] + e^a \mathsf{E}_{P_0}[1-T_a]
\ge
\epsilon +\delta 
\end{align}
Thus,
\begin{align}
 \mathsf{E}_{P_0}[1-T]
\ge e^{-a}\delta .
\end{align}
Taking the minimum for $T$, we have
$\beta_\epsilon(P_1 \| P_0) \ge e^{-a}\delta$, which implies that
\begin{align}
-\log \beta_\epsilon(P_1 \| P_0) \le  a -\log \delta .
\end{align}
Taking the infimum for $a, \delta>0$, we obtain the second inequality.
\end{proof}

Here, we employ $\hat{\theta}(r)=\hat{\theta}[\phi](r)$
defined at \eqref{12-31-1} for a convex function $\phi$. 
Then, modifying Proposition \ref{P1},
we have the following lemma.

\begin{lemma}
We have
\begin{align*}
- \log P_1\{ Y \le \eta(\hat{\theta}(r)) \}
&\ge D(P_{\hat{\theta}(r)}\|P_{1})= r ,\nonumber \\
- \log P_0\{ Y \ge \eta(\hat{\theta}(r)) \}
&\ge D(P_{\hat{\theta}(r)}\|P_{0}).
\end{align*}
Choosing the rejection region $\{ Y \le \eta(\hat{\theta}(r)) \}$, we have
\begin{align}
-\log \beta_{e^{-r}}(P_1 \| P_0) 
\ge
\sup_{0 \le \theta \le 1} \frac{-\theta r - \phi(\theta) }{1-\theta}.
\end{align}
\end{lemma}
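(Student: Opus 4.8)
The plan is to reproduce the Chernoff-type argument of Proposition~\ref{P1}, being careful to tilt from the correct reference distribution in each of the two bounds, and then to assemble the testing estimate. Recall the setting of this appendix: $P_\theta(x)\propto P_0(x)^{1-\theta}P_1(x)^{\theta}$, $\phi(0)=\phi(1)=0$, $\eta(\theta)=\phi'(\theta)=\mathsf{E}_{P_\theta}[Y]$ with $Y=\log\frac{P_1(X)}{P_0(X)}$ and $\frac{dP_1}{dP_0}=e^{Y}$, and by \eqref{12-31-1} the number $\hat\theta(r)$ is the smaller root of $(\theta-1)\phi'(\theta)-\phi(\theta)=D(P_{\theta}\|P_1)=r$. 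Set $c:=\eta(\hat\theta(r))=\phi'(\hat\theta(r))$, so that ${\phi'}^{-1}(c)=\hat\theta(r)$. Under the standing hypothesis $0\le r\le D(P_0\|P_1)$ (used implicitly, exactly as in Lemma~\ref{L1-3-1}) one has $\hat\theta(r)\in[0,1]$, hence $\hat\theta(r)-1\le0$ and, by convexity of $\phi$, $c=\phi'(\hat\theta(r))\ge\phi'(0)=\mathsf{E}_{P_0}[Y]$.

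First I would bound $P_1\{Y\le c\}$. Since $\hat\theta(r)-1\le0$, Markov's inequality applied to $e^{(\hat\theta(r)-1)Y}$ under $P_1$ gives
\[
P_1\{Y\le c\}\ \le\ e^{-(\hat\theta(r)-1)c}\,\mathsf{E}_{P_1}\!\bigl[e^{(\hat\theta(r)-1)Y}\bigr]
\ =\ e^{-(\hat\theta(r)-1)c}\,\mathsf{E}_{P_0}\!\bigl[e^{\hat\theta(r)Y}\bigr]
\ =\ e^{-(\hat\theta(r)-1)c+\phi(\hat\theta(r))},
\]
where the second equality is the change of measure $\frac{dP_1}{dP_0}=e^{Y}$. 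Taking $-\log$ and substituting $c=\phi'(\hat\theta(r))$ yields $-\log P_1\{Y\le c\}\ge(\hat\theta(r)-1)\phi'(\hat\theta(r))-\phi(\hat\theta(r))=D(P_{\hat\theta(r)}\|P_1)=r$, which is the first displayed inequality of the lemma. The second one, $-\log P_0\{Y\ge c\}\ge D(P_{\hat\theta(r)}\|P_0)$, is Proposition~\ref{P1} applied at $a=c$ — legitimate since $c\ge\mathsf{E}_{P_0}[Y]$ and ${\phi'}^{-1}(c)=\hat\theta(r)$, so its right-hand side is $\hat\theta(r)\phi'(\hat\theta(r))-\phi(\hat\theta(r))=D(P_{\hat\theta(r)}\|P_0)$; equivalently it is the same one-line Chernoff bound with the nonnegative tilting parameter $\hat\theta(r)$.

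Next I would assemble the hypothesis-testing bound. Let $T$ be the deterministic test with rejection region $S=\{Y\le c\}$. Its first-kind error is $\mathsf{E}_{P_1}[T]=P_1(S)=P_1\{Y\le c\}\le e^{-r}$, so $T$ is admissible in the definition of $\beta_{e^{-r}}(P_1\|P_0)$; its second-kind error is $\mathsf{E}_{P_0}[1-T]=P_0(S^{c})=P_0\{Y>c\}\le P_0\{Y\ge c\}\le e^{-D(P_{\hat\theta(r)}\|P_0)}$. Hence $\beta_{e^{-r}}(P_1\|P_0)\le e^{-D(P_{\hat\theta(r)}\|P_0)}$, i.e. $-\log\beta_{e^{-r}}(P_1\|P_0)\ge D(P_{\hat\theta(r)}\|P_0)$. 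To recast this in the stated form I would use the purely convex-analytic chain of identities \eqref{26-10} (valid for an arbitrary convex $\phi$, cf. Lemma~\ref{L1-3-1}):
\[
D(P_{\hat\theta(r)}\|P_0)=\hat\theta(r)\phi'(\hat\theta(r))-\phi(\hat\theta(r))=\frac{-\hat\theta(r)\,r-\phi(\hat\theta(r))}{1-\hat\theta(r)}=\sup_{0\le\theta\le1}\frac{-\theta r-\phi(\theta)}{1-\theta},
\]
where the middle step uses $(\hat\theta(r)-1)\phi'(\hat\theta(r))-\phi(\hat\theta(r))=r$.

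The only genuinely delicate point is bookkeeping: one must tilt from $P_1$ with the parameter $\hat\theta(r)-1\le0$ for the first bound and from $P_0$ with the parameter $\hat\theta(r)\ge0$ for the second, and must check that $0\le r\le D(P_0\|P_1)$ indeed forces $\hat\theta(r)\in[0,1]$, so that both signs come out as required and $c\ge\mathsf{E}_{P_0}[Y]$. Beyond that, the argument is the routine Chernoff estimate already performed in Proposition~\ref{P1}, combined with the identities collected around \eqref{26-10}.
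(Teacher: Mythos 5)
Your proof is correct and follows exactly the route the paper intends: the paper states this lemma with only the remark that it follows by ``modifying Proposition~\ref{P1},'' and your argument --- Chernoff tilting from $P_1$ with the nonpositive parameter $\hat{\theta}(r)-1$ and from $P_0$ with the nonnegative parameter $\hat{\theta}(r)$, followed by the Neyman--Pearson-type assembly and the Legendre-transform identities of \eqref{26-10} --- is precisely that modification, with the sign and range checks ($\hat{\theta}(r)\in[0,1]$, $\eta(\hat{\theta}(r))\ge\mathsf{E}_{P_0}[Y]$) correctly supplied.
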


As the opposite inequality, we have the following lemma.
\begin{lemma}
\begin{align}
 & -\log \beta_{e^{-r}}(P_1 \| P_0) \nonumber \\
\Label{26-6}\le & \inf_{\bar{\theta}\ge 0, s>0,\theta \in (0,1)}
\frac{1}{s} [\phi( (1+s) \theta) -(1+s) \phi(\theta)  \\
&\quad -(1+s) \log (1-
2 e^{
\frac{ -(1+\bar{\theta})\phi(\theta)+\phi( (1+\bar{\theta})\theta-\bar{\theta} ) -\bar{\theta} r}{1+\bar{\theta}}}
) ] \nonumber \\
\Label{27-3} \le & \inf_{s>0,\theta \in (\hat{\theta}(r),1)}
\frac{1}{s} [\phi( (1+s) \theta) -(1+s) \phi(\theta) \\
&\quad -(1+s) \log (1-
2 e^{-\phi(\theta)+\phi(\hat{\theta}(r))
+(\theta-\hat{\theta}(r)) \frac{d\phi}{d\theta}(\hat{\theta}(r)) }
) ] \nonumber\\
= & \inf_{s>0,\theta \in (\hat{\theta}(r),1)}
D_{1+s}(P_{\theta}\|P_0) - \frac{1+s}{s} \log (1- 2 e^{-D(P_{\hat{\theta}(r)}\|P_{\theta})}) .
\Label{27-2} 
\end{align}
\end{lemma}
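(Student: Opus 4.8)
The plan is to establish the three relations of the lemma in the order \eqref{26-6}, \eqref{27-3}, \eqref{27-2}. Throughout, recall that with the generator $Y=\log\frac{P_1(X)}{P_0(X)}$ the family satisfies $P_0=P_0$, $P_1=P_1$ (the member at $\theta=1$), and $\phi(1)=\log\sum_x P_0(x)\frac{P_1(x)}{P_0(x)}=0$.

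\emph{Proof of \eqref{26-6}.} I would show that \emph{every} test $T$ with $\mathsf{E}_{P_1}[T]\le e^{-r}$ satisfies $\mathsf{E}_{P_0}[1-T]\ge e^{-R(s,\theta,\bar\theta)}$, where $R(s,\theta,\bar\theta)$ denotes the expression inside the infimum in \eqref{26-6}; applying this to the minimizing test gives $-\log\beta_{e^{-r}}(P_1\|P_0)\le R(s,\theta,\bar\theta)$, and the infimum over $s,\theta,\bar\theta$ finishes. Fix $s>0$, $\theta\in(0,1)$ and set $\beta_\theta:=\mathsf{E}_{P_\theta}[T]$, $\alpha_0:=\mathsf{E}_{P_0}[T]$. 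Applying monotonicity of the R\'enyi relative entropy \cite{Csiszar} under the binary randomized post-processing $x\mapsto T(x)$, exactly as in the proof of Theorem \ref{T1} but now keeping the complementary term instead, gives
\begin{align*}
s\,D_{1+s}(P_\theta\|P_0)\ \ge\ \log\big[\beta_\theta^{1+s}\alpha_0^{-s}+(1-\beta_\theta)^{1+s}(1-\alpha_0)^{-s}\big]\ \ge\ (1+s)\log(1-\beta_\theta)-s\log(1-\alpha_0),
\end{align*}
whence $\mathsf{E}_{P_0}[1-T]=1-\alpha_0\ge(1-\beta_\theta)^{(1+s)/s}e^{-D_{1+s}(P_\theta\|P_0)}$. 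It remains to bound $\beta_\theta$ from above.

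For that, fix $\bar\theta\ge 0$ and split at a threshold on the likelihood ratio: for any $z_0$,
\begin{align*}
\mathsf{E}_{P_\theta}[T]
&=\mathsf{E}_{P_\theta}\big[T\,\mathbf 1\{P_\theta(X)<e^{z_0}P_1(X)\}\big]+\mathsf{E}_{P_\theta}\big[T\,\mathbf 1\{P_\theta(X)\ge e^{z_0}P_1(X)\}\big]\\
&\le e^{z_0}\mathsf{E}_{P_1}[T]+P_\theta\{P_\theta(X)\ge e^{z_0}P_1(X)\}\ \le\ e^{z_0-r}+e^{-\bar\theta z_0}\sum_x P_\theta(x)^{1+\bar\theta}P_1(x)^{-\bar\theta},
\end{align*}
the last step being Markov's inequality applied to $(P_\theta(X)/P_1(X))^{\bar\theta}$ under $P_\theta$; by \eqref{28-2} the final sum equals $e^{\bar\theta D_{1+\bar\theta}(P_\theta\|P_1)}$. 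Equalizing the two terms at $z_0=(r+\bar\theta D_{1+\bar\theta}(P_\theta\|P_1))/(1+\bar\theta)$ yields $\beta_\theta=\mathsf{E}_{P_\theta}[T]\le 2\,e^{\frac{\bar\theta}{1+\bar\theta}(D_{1+\bar\theta}(P_\theta\|P_1)-r)}$, and by Lemma \ref{L7} (using $\phi(1)=0$) this exponent is exactly $\frac{-(1+\bar\theta)\phi(\theta)+\phi((1+\bar\theta)\theta-\bar\theta)-\bar\theta r}{1+\bar\theta}$. Substituting into the previous display and taking $-\log$ produces $R(s,\theta,\bar\theta)$, hence \eqref{26-6}; when $1-2e^{(\cdot)}\le 0$ the corresponding term is $+\infty$ and may be discarded.

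Finally, \eqref{27-3} follows by shrinking the infimum domain to $\theta\in(\hat\theta(r),1)$ with the particular choice $\bar\theta=\frac{\theta-\hat\theta(r)}{1-\theta}\ge 0$ (restricting the domain only enlarges the infimum, so the inequality chain is preserved): one computes $(1+\bar\theta)\theta-\bar\theta=\hat\theta(r)$ and $\frac{1}{1+\bar\theta}=\frac{1-\theta}{1-\hat\theta(r)}$, and then the defining relation \eqref{12-31-1}, i.e. $D(P_{\hat\theta(r)}\|P_1)=(\hat\theta(r)-1)\phi'(\hat\theta(r))-\phi(\hat\theta(r))=r$, collapses the exponent to $-\phi(\theta)+\phi(\hat\theta(r))+(\theta-\hat\theta(r))\phi'(\hat\theta(r))$. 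Then \eqref{27-2} is pure renaming: $\frac{1}{s}[\phi((1+s)\theta)-(1+s)\phi(\theta)]=D_{1+s}(P_\theta\|P_0)$ since $\phi(0)=0$, and by \eqref{1-1} of Lemma \ref{L7} the exponent above equals $-D(P_{\hat\theta(r)}\|P_\theta)$. The main obstacle here is bookkeeping rather than depth: one must retain the correct (complementary) term in the data-processing step so that the controlled quantity is $\mathsf{E}_{P_0}[1-T]$ and not $\mathsf{E}_{P_0}[T]$, and then reliably identify the several two-argument potential-function combinations with the relative and R\'enyi relative entropies via Lemma \ref{L7}; the only genuine analytic input is the elementary Markov/Chernoff splitting above.
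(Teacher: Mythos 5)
Your proposal is correct and follows essentially the same route as the paper: R\'enyi data-processing between $P_\theta$ and $P_0$ applied to the test (keeping the acceptance term so that $\mathsf{E}_{P_0}[1-T]$ is controlled), an upper bound on $\mathsf{E}_{P_\theta}[T]$ obtained by splitting on the likelihood-ratio event between $P_\theta$ and $P_1$ and applying a Chernoff/Markov bound, equalization of the two terms at the threshold to produce the factor $2$, and finally the substitution $\bar\theta=\frac{\theta-\hat\theta(r)}{1-\theta}$ together with \eqref{12-31-1} and Lemma \ref{L7} to obtain \eqref{27-3} and \eqref{27-2}. The only cosmetic differences are that you phrase the argument for randomized tests $T$ and split $\mathsf{E}_{P_\theta}[T]$ directly, whereas the paper works with a rejection region $S$ and the equivalent min-sum inequality.
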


\begin{proof}
We choose the rejection region $S$ as $P_1(S)\le e^{-r}$.
The monotonicity of relative R\'{e}nyi entropy \cite{Csiszar} implies that
\begin{align*}
& D_{1+s}(P_\theta\|P_0) \ge 
\frac{1}{s} \log [ 
P_\theta (S)^{1+s} P_0 (S)^{-s}+(1-P_\theta (S))^{1+s} (1-P_0 (S))^{-s}] \nonumber \\
\ge &
\frac{1}{s} \log [ 
(1-P_\theta (S))^{1+s} (1-P_0 (S))^{-s}]
=
- \log (1-P_0 (S))+\frac{1+s}{s}\log (1-P_\theta (S))
\end{align*}
for $s >0$.
Hence, we have
\begin{align}
\nonumber
&-\log (1-P_0 (S)) \le
D_{1+s}(P_\theta\| P_0) -\frac{1+s}{s}\log (1-P_\theta (S)) \\
=&
\frac{1}{s} [\phi( (1+s) \theta) -(1+s) \phi(\theta)
-(1+s) \log (1-P_\theta (S)) ].
\Label{26-7}
\end{align}
Next, we focus on the inequality
\begin{align*}
(1-P_\theta (S))
+ e^{\gamma} P_{1} (S)
\ge
P_{\theta}\{\log \frac{P_{1}(x)}{P_{\theta}(x)} \ge -\gamma \}
+ e^{\gamma} P_{1} \{\log \frac{P_{1}(x)}{P_{\theta}(x)} < -\gamma \},
\end{align*}
which implies that
\begin{align*}
(1-P_\theta (S))
+ e^{\gamma} P_{1} (S)
\ge
P_{\theta}\{\log \frac{P_{1}(x)}{P_{\theta}(x)} \ge - \gamma \}.
\end{align*}
Hence,
\begin{align*}
P_{\theta}\{\log \frac{P_{1}(x)}{P_{\theta}(x)} < -\gamma \}+ e^{\gamma-r} 
\ge
P_\theta (S).
\end{align*}
For any $\bar{\theta}\ge 0$, we have
\begin{align*}
P_{\theta}\{\log \frac{P_{1}(x)}{P_{\theta}(x)} < -\gamma \}
\le \sum_{x} P_{\theta}(x)^{1+\bar{\theta}} P_{1}(x)^{-\bar{\theta}} e^{-\bar{\theta} \gamma}
= e^{ -(1+\bar{\theta})\phi(\theta)+\phi( (1+\bar{\theta})\theta-\bar{\theta} ) -\bar{\theta} \gamma}.
\end{align*}
Note that $\phi(1) =0$.
Choosing $\gamma$ so that 
$ -(1+\bar{\theta})\phi(\theta)+\phi( (1+\bar{\theta})\theta-\bar{\theta} )  -\bar{\theta} \gamma=
\gamma-r$, i.e., 
$\gamma= 
\frac{ -(1+\bar{\theta})\phi(\theta)+\phi( (1+\bar{\theta})\theta-\bar{\theta} ) +r}{1+\bar{\theta}}$, we have
\begin{align}
P_\theta (S)
\le 2 e^{
\frac{ -(1+\bar{\theta})\phi(\theta)+\phi( (1+\bar{\theta})\theta-\bar{\theta} ) -\bar{\theta} r}{1+\bar{\theta}}}.\Label{26-8}
\end{align}
Combining (\ref{26-7}) and (\ref{26-8}), we obtain (\ref{26-6}).

In the following, 
we restrict $\theta$ in $[\hat{\theta}(r),1]$.
Then, we can choose $\bar{\theta}$ to be 
$\frac{\theta-\hat{\theta}(r)}{1-\theta} \ge 0$.
Thus, using (\ref{12-31-1}) i.e., the relation 
$(\hat{\theta}(r)-1)\frac{d \phi}{d\theta}(\hat{\theta}(r))
-\phi(\hat{\theta}(r))=r$,
we have
\begin{align}
& \frac{ -(1+\bar{\theta})\phi(\theta)+\phi( (1+\bar{\theta})\theta-\bar{\theta} ) -\bar{\theta} r}{1+\bar{\theta}}
=
-\phi(\theta)+
\frac{\phi(\hat{\theta}(r)) -\bar{\theta} r}{1+\bar{\theta}} \nonumber \\
=&
-\phi(\theta)+
\frac{(1-\theta)\phi(\hat{\theta}(r)) -(\theta-\hat{\theta}(r)) r}{1-\hat{\theta}(r)} \nonumber \\
=&
-\phi(\theta)+
\frac{(1-\theta)\phi(\hat{\theta}(r)) -(\theta-\hat{\theta}(r)) 
((\hat{\theta}(r)-1) \frac{d\phi}{d\theta}(\hat{\theta}(r))
-\phi(\hat{\theta}(r)))}{1-\hat{\theta}(r)} \nonumber \\
=&
-\phi(\theta)+\phi(\hat{\theta}(r))
+(\theta-\hat{\theta}(r)) \frac{d\phi}{d\theta}(\hat{\theta}(r)) 
= D(P_{\hat{\theta}(r)}\|P_{\theta}).\nonumber
\end{align}
Hence, we obtain (\ref{27-3}) and (\ref{27-2}).
\end{proof}

\section{Proof Theorem \protect{\ref{L20B}}}\Label{as1}
Firstly, we prepare the following lemma and corollary, which will be used later.
\begin{lemma}[Ces\'aro Summability] \label{lemma:cesaro}
Suppose that a sequence of matrices $\{ \beta_n \}_{n=1}^\infty$ satisfies $\beta_n \to \beta$.
Then, we have
$\lim_{n \to \infty} \frac{1}{n} \sum_{k=1}^{n} \beta_k = \beta$.
\end{lemma}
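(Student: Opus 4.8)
The plan is to reduce the matrix statement to a scalar (or entrywise) convergence statement and then run the classical Cesàro mean estimate. Since $\cX$ is finite, the matrices here are of fixed finite size, so it suffices to prove the claim for each matrix entry separately; equivalently, one fixes any norm $\|\cdot\|$ on the space of matrices (all such norms being equivalent) and shows $\|\frac{1}{n}\sum_{k=1}^n \beta_k - \beta\| \to 0$. Writing $\frac{1}{n}\sum_{k=1}^n \beta_k - \beta = \frac{1}{n}\sum_{k=1}^n (\beta_k - \beta)$, the problem becomes showing that the Cesàro averages of the null sequence $\gamma_k := \beta_k - \beta$ tend to $0$.

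Next I would carry out the standard splitting argument. Fix $\varepsilon > 0$. By hypothesis there is an $N$ such that $\|\gamma_k\| < \varepsilon/2$ for all $k > N$. For $n > N$, decompose
\begin{align}
\Bigl\| \frac{1}{n}\sum_{k=1}^n \gamma_k \Bigr\|
\le \frac{1}{n}\sum_{k=1}^{N} \|\gamma_k\|
+ \frac{1}{n}\sum_{k=N+1}^{n} \|\gamma_k\|
\le \frac{1}{n}\sum_{k=1}^{N} \|\gamma_k\|
+ \frac{n-N}{n}\cdot \frac{\varepsilon}{2}.
\end{align}
The first term on the right is a fixed constant divided by $n$, hence tends to $0$ as $n \to \infty$, so it is below $\varepsilon/2$ for all sufficiently large $n$; the second term is at most $\varepsilon/2$. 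Therefore $\|\frac{1}{n}\sum_{k=1}^n \gamma_k\| < \varepsilon$ for all large $n$, which gives the desired limit.

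Honestly, there is no serious obstacle here: this is the textbook proof that convergence implies Cesàro convergence, and the only thing to be careful about is that we are working with matrices rather than reals — which is handled trivially by passing to entries or to a fixed norm, using finiteness of $\cX$. If one wants to be economical, one can simply state the scalar fact, cite it as classical, and remark that it applies entrywise; but I would include the two-line $\varepsilon$-argument above for self-containedness since it costs almost nothing.
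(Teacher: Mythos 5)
Your proof is correct: the reduction to a fixed finite-dimensional norm (legitimate since $\cX$ is finite) followed by the standard $\varepsilon/2$ splitting of $\frac{1}{n}\sum_{k=1}^n(\beta_k-\beta)$ into the first $N$ terms (a fixed constant over $n$) and the tail (each term of norm below $\varepsilon/2$) is exactly the classical argument. The paper itself states this lemma without proof, treating it as the textbook fact that convergence implies Ces\`aro convergence, so your write-up simply supplies the omitted details and there is nothing to compare against or correct.
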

\begin{corollary} \label{corollary:cesaro-summable}
Suppose that 
$\lim_{n \to \infty} \sum_{k=0}^{n-1} \alpha_k = \alpha$.
Then, we have
$\lim_{n \to \infty} \sum_{k=0}^{n-1} \frac{n-k}{n} \alpha_k = \alpha$.
\end{corollary}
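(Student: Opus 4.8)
The statement is the scalar (and elementary) counterpart of Lemma \ref{lemma:cesaro}, and the plan is to reduce it to that lemma by a single index interchange. First I would introduce the partial sums $S_n := \sum_{k=0}^{n-1} \alpha_k$, so that the hypothesis reads $S_n \to \alpha$. The goal is to show that the weighted sum $T_n := \sum_{k=0}^{n-1} \frac{n-k}{n}\alpha_k$ also converges to $\alpha$.

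The key step is the algebraic identity $T_n = \frac{1}{n}\sum_{m=1}^{n} S_m$. To see this, write $n-k = \sum_{m=k}^{n-1} 1$, so that
\begin{align*}
T_n = \frac{1}{n}\sum_{k=0}^{n-1}\sum_{m=k}^{n-1}\alpha_k
= \frac{1}{n}\sum_{m=0}^{n-1}\sum_{k=0}^{m}\alpha_k
= \frac{1}{n}\sum_{m=0}^{n-1} S_{m+1}
= \frac{1}{n}\sum_{m=1}^{n} S_m,
\end{align*}
where the second equality is the interchange of the order of summation over the triangular index set $\{(k,m): 0\le k\le m\le n-1\}$.

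Once this identity is in hand, I would invoke Lemma \ref{lemma:cesaro} applied to the sequence $\beta_m := S_m$ (taken as scalars, or as $1\times1$ matrices): since $S_m \to \alpha$, the Ces\`aro averages $\frac{1}{n}\sum_{m=1}^{n} S_m$ converge to $\alpha$, i.e.\ $T_n \to \alpha$, which is the claim.

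\textbf{Main obstacle.} There is essentially no obstacle here; the only point requiring care is the bookkeeping in the summation interchange (in particular the off-by-one in the index range, $S_{m+1}$ versus $S_m$), and confirming that Lemma \ref{lemma:cesaro} as stated applies — it is phrased for matrices converging to a matrix, and the scalar case is the special case of $1\times1$ matrices, so no extra argument is needed.
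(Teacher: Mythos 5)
Your proof is correct and follows exactly the paper's route: the paper's own proof is the one-line instruction to apply Lemma \ref{lemma:cesaro} to $\beta_n = \sum_{k=0}^{n-1}\alpha_k$, and your summation interchange simply makes explicit the identity $\sum_{k=0}^{n-1}\frac{n-k}{n}\alpha_k = \frac{1}{n}\sum_{m=1}^{n}S_m$ that this application relies on. The index bookkeeping is right, so nothing is missing.
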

\begin{proof}
Apply Lemma \ref{lemma:cesaro} to the sequence $\beta_n = \sum_{k=0}^{n-1} \alpha_k$.
\end{proof}

Now, we assume that $X^{n+1}$ obeys the stationary Markov process 
generated by the transition matrix $W_0$,
and denote the variance by $\mathsf{V}$. 
As is shown in \cite[Lemma 6.2]{HW14-1},
$g(X^{n+1}):=\sum_{i=1}^n g(X_{i+1},X_i)$
satisfies
\begin{align*}
\lim_{n \to \infty}
\mathsf{V} [\frac{g^n(X^{n+1})}{\sqrt{n}}]
= \frac{d^2 \phi}{d \theta^2}(0).
\end{align*}
Hence, 
it is enough for Lemma \ref{L20B}
to show that 
\begin{align}
\lim_{n \to \infty}
\mathsf{V} [\frac{g^n(X^{n+1})}{\sqrt{n}}]
=
\mathsf{V} [g(X,X')]
+2 \vec{g}_*^T (Z-A) \vec{g}^*,\Label{5-10}
\end{align}
where
$\vec{g}_*:=[\sum_{x} W(x|\bar{x})g(x,\bar{x})]_{\bar{x}},$ and 
$\vec{g}^*:=[\sum_{\bar{x}} W(x|\bar{x})\tilde{P}_0(\bar{x}) g(x,\bar{x})]_{x}$.

From Proposition \ref{existence-of-z}, $Z = \sum_{n=0}^\infty (W - A)^n$ exists. 
Thus, Corollary \ref{corollary:cesaro-summable} yields that
$Z = \lim_{n\to \infty} \sum_{d=0}^{n-1} \frac{n-d}{n} (W-A)^d$,
which implies
\begin{align}
Z - I = \lim_{n\to \infty} \sum_{d=1}^{n-1} \frac{n-d}{n} (W - A)^d 
= \lim_{n\to \infty} \sum_{d=1}^{n-1} \frac{n-d}{n} (W^d - A),
\Label{eq:proof-3}
\end{align}
where the last equality follows from 
the relation $(W - A)^n = W^n - A$ given in Proposition \ref{existence-of-z}.

By elementary calculation, we have
\begin{align}
\nonumber
\lefteqn{ \frac{1}{n-1} \mathsf{V}\left[ \sum_{k=2}^n  g(X_k,X_{k-1})
 \right] } \\
=& \frac{1}{n-1} \mathsf{E}\Biggl[ \left( 
\sum_{k=2}^n (g(X_k,X_{k-1}) - \mathsf{E} [g(X,X')])
 \right) \nonumber \\
&\quad \cdot 
 \left( 
\sum_{\ell=2}^n ( g(X_\ell,X_{\ell-1})-\mathsf{E} [g(X,X')])
 \right) \Biggr] \nonumber \\
=& \frac{1}{n-1} \sum_{k=2}^n \sum_{\ell=2}^n \left\{ \mathsf{E}\left[ g(X_k,X_{k-1}) g(X_\ell,X_{\ell-1}) \right] - \mathsf{E} [g(X,X')]^2 \right\} \nonumber \\
=& \left\{ \frac{1}{n-1} \sum_{k=2}^n \sum_{\ell=2}^n \mathsf{E}\left[ g(X_k,X_{k-1}) g(X_\ell,X_{\ell-1}) \right]  \right\} 
- (n-1)\mathsf{E} [g(X,X')]^2
 \label{eq:two-favriable-proof-1}
\end{align}
Since
\begin{eqnarray}
\lefteqn{ \Pr\left\{ X_k = x_k, X_{k-1} = x_{k-1}, X_\ell = x_\ell, X_{\ell-1} = x_{\ell-1} \right\} } \nonumber \\
 &=& \left\{
 \begin{array}{ll}
 W(x_k|x_{k-1}) W^{(k - 1 - \ell)}(x_{k-1}|x_\ell) W(x_\ell | x_{\ell-1}) \tilde{P}(x_{\ell -1}) & \mbox{if } k > \ell + 1 \nonumber \\
 W(x_k|x_{k-1}) \delta_{x_{k-1} x_\ell} W(x_\ell | x_{\ell-1}) \tilde{P}(x_{\ell -1}) & \mbox{if } k = \ell + 1 \nonumber \\
 W(x_k|x_{k-1}) \tilde{P}(x_{k-1}) \delta_{x_k x_\ell} \delta_{x_{k-1} x_{\ell -1}} & \mbox{if } k = \ell \nonumber \\
 W(x_\ell | x_{\ell-1}) \delta_{x_{\ell-1} x_k} W(x_k|x_{k-1}) \tilde{P}(x_{k-1}) & \mbox{if } \ell = k+1 \nonumber \\
 W(x_\ell | x_{\ell-1}) W^{(\ell -1 -k)}(x_{\ell -1} | x_{k}) W(x_k | x_{k-1}) \tilde{P}(x_{k-1}) & \mbox{if } \ell > k+1
 \end{array}
 \right.
\end{eqnarray}
and
$\vec{g}_*^T A \vec{g}^* = \mathsf{E} \left[ g(X,X') \right]^2$,
we have
\begin{align*}
&\sum_{k=2}^n \sum_{\ell=2}^n 
\mathsf{E}\left[ g(X_k,X_{k-1}) g(X_\ell,X_{\ell-1}) \right] \\
=&(n-1) \mathsf{E}\left[ g(X,X')^2 \right]
+2(n-2) \vec{g}_*^T (I - A) \vec{g}^*
+2(n-2) \mathsf{E} \left[ g(X,X') \right]^2 \\
& +2 \sum_{k>\ell-1} \vec{g}_*^T (W^{k+1-\ell} - A) \vec{g}^* 
+(n-2)(n-3) \mathsf{E} \left[ g(X,X') \right]^2.
\end{align*}
Thus,
we can rewrite the first term of \eqref{eq:two-favriable-proof-1} as
\begin{align*}
& \hbox{RHS of } \eqref{eq:two-favriable-proof-1}
\nonumber \\
=& \mathsf{V} [g(X,X')]
 + \frac{2}{n-1} \left\{ (n-2) \vec{g}_*^T (I-A) \vec{g}^* + \sum_{k > \ell + 1} \vec{g}_*^T (W^{(k-1 - \ell)} - A) \vec{g}^*  \right\}  \nonumber \\
=& \mathsf{V} [g(X,X')]
 + \frac{2(n-2)}{n-1} \vec{g}_*^T (I-A) \vec{g}^* 
+ \frac{2(n-2)}{n-1} \sum_{d=1}^{n-3} \frac{n-2 -d}{n-2} 
\vec{g}_*^T (W^d - A) \vec{g}^* \nonumber \\
\to & \mathsf{V} [g(X,X')]
 +  2 \vec{g}_*^T (I-A) \vec{g}^* + 2 \vec{g}_*^T(Z-I) \vec{g}^* 
= \mathsf{V} [g(X,X')]+ 2\vec{g}_*^T (Z-A) \vec{g}^*,
\end{align*}
where we used (\ref{eq:proof-3}) with replacing $n$ by $n-3$,
and took the limit $n\to\infty$. 
Combining with \eqref{eq:two-favriable-proof-1}, we 
obtain (\ref{5-10}).
\endproof

\section{Lemmas}
\begin{lemma}\Label{L11-21}
When $f$ is convex
the function $x \mapsto \frac{f(x)}{x}$ with $x\in (0,\infty)$
has the minimum when 
$f'(x)x -f(x)=0$.
In particular, when $f(0)=0$,
the function $x \mapsto \frac{f(x)}{x}$ is monotone increasing for $x\ge 0$.
\end{lemma}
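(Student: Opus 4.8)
The plan is to study the sign of the derivative of $h(x):=f(x)/x$ on $(0,\infty)$. Since $h'(x)=\dfrac{f'(x)x-f(x)}{x^2}$, the stationary points of $h$ are precisely the solutions of $f'(x)x-f(x)=0$, so it suffices to show that any such stationary point is in fact a minimum, and that this persists down to the boundary when $f(0)=0$.

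First I would record that the numerator $g(x):=f'(x)x-f(x)$ is monotone nondecreasing on $(0,\infty)$. If $f$ is twice differentiable this is immediate, since $g'(x)=f''(x)x\ge 0$ by convexity. In general one uses the supporting-line inequality: for $0<x_1<x_2$, convexity gives $f(x_1)\ge f(x_2)+f'(x_2)(x_1-x_2)$, whence $g(x_2)-g(x_1)\ge x_1\big(f'(x_2)-f'(x_1)\big)\ge 0$ because $f'$ is nondecreasing and $x_1>0$. Consequently $h'(x)=g(x)/x^2$ is $\le 0$ to the left of any zero of $g$ and $\ge 0$ to its right, so such a zero is the global minimum of $h$ on $(0,\infty)$, which is the first assertion.

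For the second assertion, assume $f(0)=0$. Evaluating the supporting line of $f$ at the point $x>0$ at the argument $0$ gives $0=f(0)\ge f(x)+f'(x)(0-x)=f(x)-xf'(x)$, hence $g(x)=f'(x)x-f(x)\ge 0$ for every $x>0$. Therefore $h'(x)=g(x)/x^2\ge 0$, i.e. $h$ is nondecreasing on $(0,\infty)$; extending it to $x=0$ by $h(0):=\lim_{x\to 0^+}f(x)/x=f'(0)$, which satisfies $h(0)\le h(x)$ because convexity and $f(0)=0$ give $f(x)\ge f'(0)x$, we conclude that $h$ is monotone increasing on $[0,\infty)$. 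The only delicate point is the treatment of convex functions that fail to be differentiable; since the convex functions to which this lemma is applied (the potential function $\phi$ and its affine reparametrizations) are smooth, one may simply assume differentiability throughout, after which every step above is an elementary one-line computation and no genuine obstacle remains.
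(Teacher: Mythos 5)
Your proof is correct and follows essentially the same route as the paper: differentiate $h(x)=f(x)/x$ to get $h'(x)=(f'(x)x-f(x))/x^2$ and observe that the numerator is nondecreasing since its derivative is $f''(x)x\ge 0$. You additionally supply details the paper leaves implicit --- the supporting-line argument covering non-differentiable convex $f$, and the explicit verification that $f(0)=0$ forces $f'(x)x-f(x)\ge 0$ and hence monotonicity --- but these are refinements of the same argument rather than a different approach.
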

\begin{proof}
We have
\begin{align}
\frac{d}{dx} \frac{f(x)}{x}= \frac{f'(x)x-f(x)}{x^2}.
\end{align}
Since 
\begin{align}
\frac{d}{dx} f'(x)x-f(x)
=f''(x)x \ge 0,
\end{align}
we find that the minimum is realized 
when $f'(x)x-f(x)$.
\end{proof}

\end{document}